\documentclass[12pt,a4paper]{amsart}
\usepackage{amsfonts,a4wide,amsmath,amssymb,amsthm}
\usepackage[utf8]{inputenc}  
\usepackage[T1]{fontenc}
\usepackage[all]{xy}    
\usepackage{mathtools}  
\usepackage{multirow}    
\usepackage{csquotes}	
\usepackage{bm} 
\usepackage{tikz}   
\usepackage{float}
\usepackage{relsize}
\usepackage{tikz-cd}
\usepackage{bbold}
\usepackage[pagebackref=true]{hyperref}      
\usepackage{comment}
\numberwithin{equation}{section}
\allowdisplaybreaks

\usepackage{anyfontsize}

\theoremstyle{plain}
\newtheorem{thm}{Theorem}[section]
\newtheorem{prop}{Proposition}[section]
\newtheorem{cor}[prop]{Corollary}
\newtheorem{question}{Question}[section]

\newtheorem{lem}[prop]{Lemma}

\theoremstyle{definition}
\newtheorem{defi}[prop]{Definition}

\theoremstyle{remark}
 \newtheorem{rem}[prop]{Remark}

\newcommand{\Ccal}{{\mathcal C}}
\newcommand{\Dcal}{{\mathcal D}}

\newcommand{\Fcal}{{\mathcal F}}

\newcommand{\Hcal}{{\mathcal H}}

\newcommand{\Ocal}{{\mathcal O}}

\newcommand{\N}{{\mathbb{N}}}
\newcommand{\Z}{{\mathbb{Z}}}
\newcommand{\Q}{{\mathbb{Q}}}
\newcommand{\R}{{\mathbb{R}}}
\newcommand{\C}{{\mathbb{C}}}

\newcommand{\GL}{\operatorname{GL}}
\newcommand{\SL}{\operatorname{SL}}

\renewcommand{\Im}{\operatorname{Im}}

\newcommand{\Cl}{\operatorname{Cl}}

\renewcommand{\Re}{\operatorname{Re}}

\newcommand{\Jac}{\operatorname{Jac}}

\newcommand{\Sp}{\operatorname{Sp}}

\newcommand{\Tr}{\operatorname{Tr}}

\newcommand{\vol}{\operatorname{vol}}

\newcommand{\dd}{\textrm{d}}

\newcommand{\ra}{\rightarrow}

\author{Linda Frey}
\address{Linda Frey: Mathematisches Institut der Universit\"at G\"ottingen, Bunsenstrasse 3-5, 37073 G\"ottingen, Germany}
\email{lindafrey89@gmail.com}

\author{Samuel Le Fourn}
 \address{Samuel Le Fourn: Institut Fourier, UMR 5582, Université Grenoble Alpes, France}
 \email{samuel.le-fourn@univ-grenoble-alpes.fr}

\author{Elisa Lorenzo Garc\'ia}
 \address{Elisa Lorenzo Garc\'ia: Institut de Math\'ematiques, Universit\'e de Neuch\^atel, Rue Emile-Argand 11, 2000, Neuch\^atel, Switzerland -- Laboratoire IRMAR, Office 602,
Universit\'e de Rennes 1, Campus de Beaulieu, 35042, Rennes Cedex} \email{elisa.lorenzo@unine.ch, elisa.lorenzogarcia@univ-rennes1.fr }

\title{Explicit height estimates for CM curves of genus 2 }
\begin{document}
\begin{abstract} In this paper we make explicit the constants of Habegger and Pazuki's work from 2017 on bounding the discriminant of cyclic Galois CM fields corresponding to genus 2 curves with CM by them and potentially good reduction outside a predefined set of primes. We also simplify some of the arguments.
\end{abstract}

\keywords{Faltings height, Complex Multiplication, abelian surfaces, period matrices, number fields, L-functions.}
\subjclass{11G15, 11G30, 14G50, 11L07, 11R29, 11R42, 11Y40.}

\maketitle

\section{Introduction}

Serre and Tate's work in 1968  \cite{ST68} demonstrated that an abelian variety with complex multiplication exhibits potentially good reduction everywhere. This principle applies not only to elliptic curves but also to Jacobians of curves of genus $g\geq2$. However, it is important to note that these curves can still encounter geometrically bad reduction.

In 1983, Faltings provided a groundbreaking result when he proved the Shafarevich Conjecture \cite{Falt83}. This conjecture implies, when we fix the field of definition for our varieties, the existence of at most finitely many curves of a given genus with potentially good reduction outside a predefined set of primes. This can be viewed as the analog result for curves in a manner similar to the classical result in number fields, which states that there are only a finite number of unramified extensions of bounded degree outside a fixed set of primes. It is important to emphasize that this is while keeping the base field fixed.

Habegger and Pazuki, in their recent work \cite{HabeggerPazuki17} from 2017, provided an elegant finitness result for curves of genus 2 without the need to fix the field of definition. The trade-off is that one must fix the discriminant of the real multiplication field contained in the complex multiplication of the Jacobian of the curve.

More specifically, the main results in the aforementioned source bound the discriminant of such quartic complex multiplication fields in terms of the minimal discriminant \footnote{The minimal discriminant represents the product of prime ideals at which the curve experiences bad reduction to the minimal exponent in the discriminant of an integer model. For more details, please refer to \cite[p. 2538]{HabeggerPazuki17}.} of the curves. 
\begin{thm}(\cite[Thm. 1.3]{HabeggerPazuki17})\label{thm:mainHP}
Fix a real quadratic field $F$. Let $C$ be a curve of genus $2$ defined over $k\subseteq\overline{\Q}$ such that its jacobian has CM by the maximal order of $K$, where $K$ is a cyclic quartic Galois CM field containing $F$. Then 
$$
\log \Delta_K\leq c(F)\left(1 +\frac{1}{[k:\mathbb{Q}]}\log \operatorname{N}(\Delta^0_{min}(C_k))\right), 
$$
where $c(F)$ is a constant only depending on $F$.
\end{thm}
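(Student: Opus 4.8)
The approach follows \cite{HabeggerPazuki17}, whose constants we want to make explicit; the idea is to sandwich $\log\Delta_K$ between two bounds on the stable Faltings height of $A:=\operatorname{Jac}(C)$. Set $h_\Delta:=\tfrac{1}{[k:\Q]}\log\operatorname{N}(\Delta^0_{\min}(C_k))$. Two preliminary reductions. First, since $A$ has complex multiplication it has potentially good reduction everywhere (Serre--Tate \cite{ST68}). Second, a cyclic quartic CM field has a single CM type modulo the action of $\Gal(K/\Q)$ and complex conjugation, so up to $\overline{\Q}$-isomorphism there is exactly one principally polarised surface with CM by the maximal order of $K$ --- being simple it is a Jacobian, so it is our $A$ --- and moreover $\zeta_K(s)=\zeta(s)L(s,\chi_F)L(s,\psi)L(s,\overline{\psi})$ with $\chi_F$ the \emph{fixed} quadratic character of $F$ and $\psi$ a quartic Dirichlet character, while $\Delta_K=|\Delta_F|\,f_\psi^{2}$ by conductor--discriminant. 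Concretely I aim to prove
\[
  c_1\log\Delta_K-c_2(F)\ \le\ h_{\mathrm{Fal}}(A)\ \le\ c_3\,h_\Delta+c_4(F)+(c_1-\gamma)\log\Delta_K,
\]
with $c_1,c_3,\gamma>0$ absolute and explicit and $c_2,c_4$ depending only on $F$; subtraction then gives $\log\Delta_K\le c(F)(1+h_\Delta)$.

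\textbf{Step A: lower bound.} I would feed $A$ into Colmez's formula for the Faltings height of a CM abelian variety with abelian CM field (valid here since $K/\Q$ is abelian; Colmez's argument in the abelian case was later completed by Obus). It expresses $h_{\mathrm{Fal}}(A)$ as an explicit $\Q$-linear combination of the $\log$-conductors of the characters above --- giving a main term of size $\asymp\log\Delta_K$ --- and of the logarithmic derivatives $L'(0,\cdot)/L(0,\cdot)$, which the functional equation rewrites as $L'(1,\cdot)/L(1,\cdot)$. The contribution of $\chi_F$ is a constant depending only on $F$, possibly ineffective through Siegel's lower bound for $L(1,\chi_F)$ --- this is the source of the unspecified dependence on $F$. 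The contribution of the quartic character $\psi$ is harmless: $\psi$ is not self-conjugate, so by Landau's theorem $L(s,\psi)$ has no exceptional real zero, and a standard zero-free region gives the \emph{effective} estimate $|L'(1,\psi)/L(1,\psi)|\ll(\log f_\psi)^{O(1)}=o(\log\Delta_K)$. Hence $h_{\mathrm{Fal}}(A)\ge c_1\log\Delta_K-c_2(F)$. Only this inequality is needed and it also admits a more elementary proof --- the CM lattice has covolume $\gg\sqrt{\Delta_K}$ up to fixed factors while the Faltings norm of a well-chosen differential is $O(1)$ --- which may be one of the advertised simplifications.

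\textbf{Step B: upper bound.} The arithmetic input is that $\Delta^0_{\min}(C_k)$ equals, up to a factor bounded in terms of the genus, the ideal generated by the discriminant of a minimal hyperelliptic model of $C$, i.e. by the value at the period matrix $Z$ of $A$ of the Siegel cusp form $\chi_{10}\in S_{10}(\Sp_4(\Z))$ cutting out the decomposable locus $\mathcal A_1\times\mathcal A_1\subset\mathcal A_2$; equivalently, a place divides $\Delta^0_{\min}(C_k)$ exactly when $A$ there reduces to a product of elliptic curves with product polarisation. Over a finite extension $k'$ where $A$ has good reduction I would use the decomposition of $h_{\mathrm{Fal}}(A)$ into local terms: the non-archimedean part is controlled by the class group of $k'$ and absorbed into constants, so up to such a correction $h_{\mathrm{Fal}}(A)=-\tfrac{1}{[k':\Q]}\sum_{v\mid\infty}\log\|\omega\|_v$ for a Néron differential $\omega$. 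Passing from $\omega$ to the differential $\tfrac{dx}{y}\wedge\tfrac{x\,dx}{y}$ of the minimal hyperelliptic model --- their ratio lying in $k^{\times}$ --- the product formula converts this, via the valuations of that ratio, into $c_3\,h_\Delta$ (the discriminant being precisely the obstruction to the hyperelliptic differential's being Néron) plus the archimedean period of the hyperelliptic differential; the latter I would bound by $a'\log\Delta_K+O_F(1)$ through a Chowla--Selberg/Colmez evaluation of $\chi_{10}$ and of $\det(\operatorname{Im}Z)$ at the CM point $Z$, where crucially $a'<c_1$. Setting $\gamma:=c_1-a'$ gives the displayed upper bound. (Alternatively one may compare $h_{\mathrm{Fal}}(A)$ with the height of the moduli point $[C]\in\mathcal M_2$, bounding the finite part of that height by $h_\Delta$ via potentially good reduction together with $\chi_{10}=\disc$, and its archimedean part by a Colmez-type estimate at $Z$.)

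The main obstacle, and where essentially all the explicit constants are consumed, is the archimedean Arakelov bookkeeping of Step B: one must align the Chowla--Selberg evaluations of the several period integrals and of $\chi_{10}(Z)$ so that, on subtracting the two displayed inequalities, the coefficient of $\log\Delta_K$ does not cancel but persists as $\gamma>0$; the degenerate case $\Delta^0_{\min}(C_k)=(1)$, where $a'=c_1$, then has to be disposed of separately, either by carrying the lower-order terms or via an independent finiteness input for CM genus $2$ curves of everywhere good reduction. The analytic side, by contrast, is fully effective on the varying quartic part, the one inexplicit quantity being the Siegel constant attached to the frozen field $F$ --- in keeping with the statement's ``constant only depending on $F$''.
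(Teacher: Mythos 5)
Your skeleton --- sandwiching $h(\Jac(C))$ between a Colmez--Obus lower bound $c_1\log\Delta_K-O_F(1)$ and an upper bound consisting of the finite discriminant part plus an archimedean part whose coefficient of $\log\Delta_K$ is strictly smaller than $c_1$ --- is exactly the Habegger--Pazuki strategy followed here, and your Step A matches Proposition \ref{easyhbound}. The gap is in Step B. You assert the archimedean contribution is $\le a'\log\Delta_K+O_F(1)$ ``through a Chowla--Selberg/Colmez evaluation of $\chi_{10}$ and of $\det(\Im Z)$ at the CM point''; no such evaluation exists, and for a \emph{single} CM point the asserted bound is false. Since $|\chi_{10}(Z)|$ is comparable to $\min\{1,\pi|z_{12}|\}^2e^{-2\pi\Tr\Im Z}$, the term $-\log|\chi_{10}(Z_I)|$ is of size $2\pi\Tr\Im Z_I\asymp\bigl(|\Delta_K|^{1/2}/N([I])\bigr)^{1/2}$, which can be as large as $|\Delta_K|^{1/4}$ when the class $[I]$ contains no integral ideal of small norm. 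What saves the proof is that the archimedean part of the Faltings height is the \emph{average} of these quantities over the Galois orbit of $A$, a coset $H$ of the type-norm image of $\Cl_K$, and one needs both $\#H\gg_{\epsilon}|\Delta_K|^{1/2-\epsilon}$ (Brauer--Siegel, Proposition \ref{deltalog}) and a subconvexity or equidistribution input (Michel--Venkatesh in \cite{HabeggerPazuki17}, Chandee under GRH in Proposition \ref{propsubconv}, or Zhang) to show that $\frac{1}{\#H}\sum_{[I]\in H}(|\Delta_K|^{1/2}/N([I]))^{1/2}$ is $O(|\Delta_K|^{1/2-\delta}/\#H)+O_F(\log\Delta_K)$. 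Neither ingredient appears in your proposal, and your preliminary claim that there is a \emph{unique} principally polarised surface over $\overline{\Q}$ with CM by $\Ocal_K$ is false --- there are $\#H$ pairwise non-isomorphic ones --- which is precisely what hides the need for this averaging.

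A second missing ingredient: $\chi_{10}$ vanishes on the locus of decomposable surfaces, so the displayed lower bound on $|\chi_{10}(Z)|$ is vacuous without a lower bound on the off-diagonal entry $z_{12}$ of the reduced period matrix, i.e.\ on the distance of the CM point to that locus. In \cite{HabeggerPazuki17} this needs Liouville's inequality combined with the Pila--Tsimerman height bound for CM points; here it is replaced by the explicit inequality $|z_{12}|\ge\frac{2}{3}|\Delta_K|^{-1/2}$ of Proposition \ref{propminz12}. Nothing in your argument supplies this. Relatedly, the obstacle you single out (aligning Chowla--Selberg constants so the $\log\Delta_K$ coefficients do not cancel) is not where the difficulty lies: the $z_{12}$ bound contributes only $\frac{1}{10}\log\Delta_K$ against the Colmez coefficient $\frac{\sqrt{5}}{20}$, a comfortable margin; what must actually be beaten down is the potential $|\Delta_K|^{1/4}$ from the trace terms.
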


In the following we will sometimes abbreviate the jacobian of $C$ has CM by the maximal order of $K$ by saying that $\Jac(C)$ has CM by $K$ or even shorter, by saying that $C$ has CM by $K$. 

\begin{cor}(\cite[Thm. 1.1]{HabeggerPazuki17})
Fix a real quadratic field $F$. Let $C$ be a curve of genus $2$ defined over $\overline{\Q}$ with good reduction everywhere and such that its jacobian has CM by $K$, where $K$ is a cyclic Galois CM field containing $F$. Then the discriminant $\Delta_K$ is bounded. In particular, there are only a finite number of such curves.
\end{cor}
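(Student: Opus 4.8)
The Corollary is a formal consequence of Theorem \ref{thm:mainHP} combined with two classical finiteness statements, so my plan is short.

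First I would specialize the height bound. Since $C$ is a curve over $\overline{\Q}$ it is defined over some number field, and the hypothesis ``good reduction everywhere'' means there is a number field $k$ over which $C$ admits a model with good reduction at every finite place of $k$ (by Serre--Tate such a $k$ exists in any case, as $\Jac(C)$ has CM). For such a $k$ the minimal discriminant $\Delta^0_{min}(C_k)$ is the unit ideal of $\Ocal_k$, so $\operatorname{N}(\Delta^0_{min}(C_k)) = 1$ and the second term on the right-hand side of Theorem \ref{thm:mainHP} vanishes. This yields $\log \Delta_K \leq c(F)$, i.e. $\Delta_K \leq e^{c(F)}$, a bound depending only on $F$.

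Next I would deduce that only finitely many fields $K$ can occur: by Hermite--Minkowski there are only finitely many number fields of degree $4$ with discriminant at most $e^{c(F)}$, hence only finitely many cyclic quartic Galois CM fields $K \supseteq F$. It then remains to bound, for each such $K$, the number of genus $2$ curves over $\overline{\Q}$ with CM by $\Ocal_K$. Here I would invoke the classical theory of complex multiplication: over $\C$ there are finitely many CM types $\Phi$ for $K$, for each $\Phi$ the abelian surfaces with CM by $(\Ocal_K,\Phi)$ form a single orbit under an action of the finite group $\Cl(\Ocal_K)$, each such surface carries only finitely many principal polarizations up to isomorphism, and by the Torelli theorem a principally polarized abelian surface of dimension $2$ is either the Jacobian of exactly one genus $2$ curve or a polarized product of elliptic curves. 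Hence only finitely many $C$ (up to $\overline{\Q}$-isomorphism) arise for each $K$, and therefore finitely many in total.

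The only step needing any care is the first one, namely verifying that the ``good reduction everywhere'' hypothesis really makes the error term in Theorem \ref{thm:mainHP} disappear --- equivalently, that a genus $2$ curve with CM has a model over a number field with everywhere good reduction. Granting this, the Corollary is pure bookkeeping on top of Theorem \ref{thm:mainHP}, and I would expect the write-up to be correspondingly short.
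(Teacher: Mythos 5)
Your proposal is correct and follows exactly the route the paper intends: the good-reduction hypothesis forces $\operatorname{N}(\Delta^0_{min}(C_k))=1$ so Theorem \ref{thm:mainHP} gives $\log\Delta_K\leq c(F)$, and the finiteness of curves then follows from Hermite--Minkowski plus standard CM theory (the paper treats this as immediate and spells out only the specialization step, e.g.\ in Section \ref{sec:comp}). No substantive difference from the paper's argument.
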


We refer to \cite{GL12,LV15} for results bounding in the other direction the primes in the minimal discriminant of the curve in terms of the discriminant of the quartic field.

The idea behind the proof of Theorem \ref{thm:mainHP} is bounding the height of such an abelian variety from below and from above in terms of $\Delta_K$ and $\operatorname{N}(\Delta^0_{min}(C_k))$ and from there getting the desire bound on it. 

The first inequality is a consequence of a particular case of Colmez's Conjecture \cite{Colmez93} by Obus \cite{Obus13} and a result of Badzyan \cite[Thm. 1]{Badzyan10}: 

\begin{prop}(\cite[Prop. 4.3 (iv)]{HabeggerPazuki17})\label{easyhbound}
    Let $K$ be a CM field with $K/\mathbb{Q}$ cyclic of degree $4$ and let $F$ be the real quadratic subfield of $K$. Let $A$ be a simple abelian surface with endomorphism ring $\mathcal{O}_K$. Then, denoting by $h(A)$ the Faltings height of $A$, 
    \[
    h(A)\geq -c+\frac{\sqrt{5}}{20}\log \Delta_K 
    \]
    where $c$ is a constant that depends only on $F$. 
\end{prop}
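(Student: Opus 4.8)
The plan is to make fully explicit the two ingredients mentioned just before the statement: the case of Colmez's conjecture for abelian CM fields established by Obus \cite{Obus13} (completing Colmez's \cite{Colmez93} treatment of the cyclic case, originally valid only up to a rational multiple of $\log 2$), together with Badzyan's effective lower bound (\cite[Thm.~1]{Badzyan10}) for logarithmic derivatives of abelian $L$-functions at $s=1$. Since $\End(A)=\Ocal_K$ we have $\End^0(A)=K$ with $[K:\Q]=4=2\dim A$, so $A$ is of CM type $(K,\Phi)$; as $K/\Q$ is cyclic of degree $4$, its only CM subfield is $K$ itself, so $\Phi$ is automatically primitive (consistently with $A$ being simple) and Colmez's identity applies with no restriction. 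Write $G=\Gal(K/\Q)=\langle\sigma\rangle\cong\Z/4\Z$, with complex conjugation $\sigma^2$; besides the trivial character, $G$ carries the quadratic character $\chi_F$ cutting out $F$ and a conjugate pair of quartic characters $\chi,\overline{\chi}$, which are odd (as $\chi(\sigma^2)=-1$) and share a conductor $f:=f_\chi=f_{\overline{\chi}}$.

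A short computation of the locally constant function attached to $\Phi$ in Colmez's formalism (concretely, $g\mapsto\#(\Phi\cap g\Phi)$ decomposes on $G$ as $\chi_0+\tfrac12\chi+\tfrac12\overline{\chi}$) shows that $\chi_F$ does not occur and that $\chi$ and $\overline{\chi}$ occur with equal coefficient $\tfrac12$; separating off the trivial character, whose contribution is an absolute constant, Colmez's identity becomes
\[
h(A)\;=\;-\frac12\left(\frac{L'(0,\chi)}{L(0,\chi)}+\frac{L'(0,\overline{\chi})}{L(0,\overline{\chi})}\right)-\frac12\log f + c_0 ,
\]
with $c_0$ absolute (absorbing the archimedean contributions and the normalization of the Faltings height). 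Since $\chi$ is odd, $L(0,\chi)\neq 0$ and $L(1,\chi)\neq 0$, and taking the logarithmic derivative at $s=0$ of the completed $L$-function $\Lambda(s,\chi)=(f/\pi)^{(s+1)/2}\Gamma\!\left(\tfrac{s+1}{2}\right)L(s,\chi)$ together with $\Lambda(s,\chi)=W(\chi)\,\Lambda(1-s,\overline{\chi})$ converts the value at $s=0$ into the value at $s=1$ plus $-\log f$ plus an absolute constant. Combining this with the conductor--discriminant formula $\Delta_K=\Delta_F\cdot f^2$ (so that $\log f=\tfrac12\log\Delta_K-\tfrac12\log\Delta_F$, the second term depending only on $F$) we arrive at
\[
h(A)\;=\;\frac14\log\Delta_K+\frac12\left(\frac{L'(1,\chi)}{L(1,\chi)}+\frac{L'(1,\overline{\chi})}{L(1,\overline{\chi})}\right)+c_1(F) ,
\]
for a constant $c_1(F)$ depending only on $F$.

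It remains to bound the logarithmic-derivative term from below. Using the factorization $\zeta_K=\zeta\cdot L(s,\chi_F)\cdot L(s,\chi)\cdot L(s,\overline{\chi})$, this term coincides, up to a constant depending only on $F$ (through $\gamma$ and $\tfrac{L'}{L}(1,\chi_F)$), with the Euler--Kronecker constant of $K$, for which Badzyan's theorem provides an effective lower bound. Feeding it in and re-expressing everything through $\log\Delta_K$ yields
\[
\frac12\left(\frac{L'(1,\chi)}{L(1,\chi)}+\frac{L'(1,\overline{\chi})}{L(1,\overline{\chi})}\right)\;\geq\;-\left(\frac14-\frac{\sqrt5}{20}\right)\log\Delta_K - c_2(F) ;
\]
substituting into the previous display and collecting all $F$-dependent and absolute constants into a single $c=c(F)$ gives $h(A)\geq -c+\tfrac{\sqrt5}{20}\log\Delta_K$, as desired.

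The difficulty here is not conceptual but lies entirely in the bookkeeping of constants. One must (i) compute exactly the coefficients of the characters of $G$ in Colmez's function — in particular confirm that $\chi_F$ is absent — and pin down the normalization relating Faltings's height to Colmez's, so that $c_0$ and the precise rational coefficient of $\log\Delta_K$ appearing before Badzyan's bound are correct; (ii) keep track of the archimedean factors $\Gamma(\tfrac{s+1}{2})$ and $(f/\pi)^{(s+1)/2}$ through the functional equation; and (iii) combine these with the explicit constant in Badzyan's theorem so that the final coefficient of $\log\Delta_K$ comes out to be \emph{exactly} $\tfrac{\sqrt5}{20}$. One should also verify that every error term genuinely depends only on $F$ — via $\Delta_F$ and the quadratic $L$-function $L(s,\chi_F)$ — and not on $K$, and that the version of Colmez's conjecture invoked is the unconditional one valid for cyclic quartic (more generally abelian) CM fields.
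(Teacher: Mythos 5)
Your proposal is correct and follows exactly the route the paper indicates for this result: the paper does not reprove the proposition but cites \cite[Prop. 4.3 (iv)]{HabeggerPazuki17}, whose proof combines the Colmez--Obus formula for the Faltings height of a cyclic quartic CM abelian surface with the functional equation, the conductor--discriminant formula, and Badzyan's lower bound on the Euler--Kronecker constant $\gamma_K$ --- precisely the ingredients and bookkeeping you describe. Your character computation $(2,1,0,1)=\chi_0+\tfrac12\chi+\tfrac12\overline{\chi}$ and the identification of the remaining logarithmic-derivative term with $\tfrac12(\gamma_K-\gamma_F)$ are consistent with the explicit constant $c=\tfrac{\gamma_F}{2}+\tfrac14\log\Delta_F+\log 2\pi+\gamma_\Q$ that the paper records immediately after the statement.
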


By following the cited references we can conclude that we can take $c=\frac{\gamma_F}{2}+\frac{1}{4}\log \Delta_F+\log 2\pi+\gamma_\mathbb{Q}$ where $\gamma_\mathbb{Q}=0.566215...$ is the Euler constant and $\gamma_F$ is the Euler-Kronecker constant of $F$ \cite{Ihara06}. 

The laborious inequality is upper-bounding  the factors $h_i^{\infty}$ in the expression\footnote{{Notice the $1/4$ factor here coming from averaging in \cite[Thm. 4.5(i)]{HabeggerPazuki17} and forgotten in \cite[Eq. 6.1]{HabeggerPazuki17} with no repercussion on their final result.}}:

\begin{equation}\label{def:h}
h(\Jac(C))=h^0+{(}h_1^{\infty}+h_2^{\infty}+h_3^{\infty}+h_4^{\infty}{)/4}-\frac{4}{5}\log 2-\log\pi,
\end{equation}
where $h^{0}=\frac{1}{60[k:\mathbb{Q}]}\log \operatorname{N}(\Delta^0_{min}(C_k))$. More precisely, for each $\epsilon>0$, \cite[Eq. 6.3]{HabeggerPazuki17} gives
$$
h_i^{\infty}\leq \epsilon\log\Delta_K +c(\epsilon, F),
$$
where the constant $c(\epsilon,F)$ is not explicit.
In this paper we go through all the steps in the proof of \cite[Thm. 1.3]{HabeggerPazuki17}, we simplify some of them and we get explicit constants. The price to pay is having to assume the Generalized Riemann Hypothesis (GRH). 

We assume GRH at two different steps here: first, for the explicit subconvexity estimates of Chandee \cite{Chandee09} that we use in Lemma \ref{lemboundssubconv}. Second for lower bounds on class numbers \cite{Louboutin05} in Proposition \ref{deltalog} where it provides the best estimates to lower-bound the cardinal of the Class group. This second time, in principle we could avoid using the GRH by using the unconditional explicit result of Stark \cite{stark}, however, it does not provide a good enough exponent for our argument. 

Our main results are the following explicit versions of the main results in \cite{HabeggerPazuki17}.

\begin{thm}\label{thm:main}
Fix a real quadratic field $F$ and assume GRH. Let $C$ be a curve of genus $2$ defined over $k\subseteq\overline{\Q}$ such that its jacobian has CM by $K$, where $K$ is a cyclic Galois CM field containing $F$. Then 
$$
\Delta_K\leq\max\{e^{e^{64}}, (144000h_F^3R_F)^{64}, e^{10(\frac{1}{[k:\mathbb{Q}]}\log \operatorname{N}(\Delta^0_{min}(C_k))+\frac{\gamma_F}{2}+\frac{1}{4}\log \Delta_F + 8.4R_F + 1.4{5})}\}.
$$
\end{thm}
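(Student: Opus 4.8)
The strategy is to combine the lower bound on the Faltings height from Proposition \ref{easyhbound} with an explicit upper bound on $h(\Jac(C))$ obtained from \eqref{def:h}. From Proposition \ref{easyhbound} we get
$$
\frac{\sqrt5}{20}\log\Delta_K \;\le\; h(\Jac(C)) + c, \qquad c=\tfrac{\gamma_F}{2}+\tfrac14\log\Delta_F+\log 2\pi+\gamma_{\mathbb Q},
$$
so it suffices to bound $h(\Jac(C))$ from above. Using \eqref{def:h}, $h(\Jac(C))=h^0+(h_1^\infty+\cdots+h_4^\infty)/4-\tfrac45\log 2-\log\pi$, and $h^0=\frac{1}{60[k:\mathbb Q]}\log\operatorname N(\Delta^0_{min}(C_k))$ is already explicit. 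So the whole problem reduces to an explicit upper bound for each archimedean contribution $h_i^\infty$.

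For the $h_i^\infty$ terms I would follow the Habegger--Pazuki argument but insert explicit inputs: the period matrix of $\Jac(C)$ at a place is controlled (via the theory of CM abelian surfaces and their period lattices) by special values of Hecke $L$-functions attached to $K$, and an explicit subconvexity bound under GRH — namely Chandee's bound \cite{Chandee09}, invoked in Lemma \ref{lemboundssubconv} — gives $h_i^\infty \le \epsilon\log\Delta_K + c(\epsilon,F)$ with $c(\epsilon,F)$ now explicit and with an explicit dependence on $\epsilon$. One must also track the dependence on the regulator $R_F$ and class number $h_F$ of $F$ (this is where the $R_F$, $h_F$ terms in the statement come from), and use the GRH-conditional lower bound on the class number $h_K$ from Louboutin \cite{Louboutin05} in Proposition \ref{deltalog} to control the size of the relevant $L$-values from below, which feeds into bounding the period matrix entries. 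Assembling, $\tfrac14\sum_i h_i^\infty \le \epsilon\log\Delta_K + \tilde c(\epsilon,F)$ for an explicit $\tilde c$, hence
$$
\Big(\tfrac{\sqrt5}{20}-\epsilon\Big)\log\Delta_K \;\le\; h^0 + \tilde c(\epsilon,F) + c - \tfrac45\log 2 - \log\pi.
$$
Choosing $\epsilon$ a fixed fraction of $\sqrt5/20$ (so that $\sqrt5/20-\epsilon$ is an explicit positive constant, producing the factor $10$ and exponent $64$ that appear in the bound) and solving for $\log\Delta_K$ gives a bound of the shape $\log\Delta_K \le A\big(\tfrac{1}{[k:\mathbb Q]}\log\operatorname N(\Delta^0_{min}(C_k)) + (\text{explicit terms in }\gamma_F,\Delta_F,R_F)\big)$; exponentiating yields the third term $e^{10(\cdots)}$ in the maximum.

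The reason the statement is a maximum of three quantities rather than a single clean bound is that several of the explicit estimates (subconvexity, the class-number lower bound, the archimedean analysis) are only valid, or only give the clean exponent, once $\Delta_K$ (or a related quantity) is large enough; below those thresholds one falls back on the crude bounds $\Delta_K\le e^{e^{64}}$ or $\Delta_K\le(144000 h_F^3 R_F)^{64}$, which come respectively from the range where Chandee's subconvexity estimate is not yet in its asymptotic regime and from the range where Louboutin's class-number bound forces $\Delta_K$ small in terms of the invariants of $F$. So the proof is really a case distinction: in the "large $\Delta_K$" regime run the height comparison above; in the complementary regimes the bound is immediate.

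The main obstacle I expect is the explicit archimedean analysis: bounding $h_i^\infty$ means getting an effective handle on the entries of a symplectic period matrix representing $\Jac(C)$ in a fundamental domain for $\Sp_4(\mathbb Z)$, relating these to $L$-values via the CM structure, and then controlling those $L$-values both above (subconvexity) and below (class-number and residue bounds) with every constant tracked — including the somewhat delicate dependence on $R_F$ and $h_F$ coming from the unit group and ideal-class structure of $F\subset K$. Habegger and Pazuki leave $c(\epsilon,F)$ inexplicit precisely because this bookkeeping is heavy; making it explicit, while simplifying some of their steps, is the technical heart of the paper, and everything after (the height comparison and the final algebra to isolate $\log\Delta_K$) is routine by comparison.
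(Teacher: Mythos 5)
Your plan follows the paper's own proof: Proposition \ref{easyhbound} for the lower bound on the height, the decomposition \eqref{def:h}, an explicit upper bound on each $h_i^\infty$ (the paper's \eqref{finalbh}), and then the algebra isolating $\log\Delta_K$ with a case distinction according to the size of $\Delta_K$, which produces the maximum of three quantities. Two of your attributions are off, however. First, the threshold $e^{e^{64}}$ does not come from Chandee's subconvexity estimate being outside its asymptotic regime; it comes from the factor $|\Delta_K|^{1/\log\log|\Delta_K|}$ in Proposition \ref{deltalog} (the bound on $\#\operatorname{Cl}_K[2]$ via the number of ramified primes, Lemma \ref{lem:delta}), which must be dominated by $|\Delta_K|^{1/64}$ so that the exponent $1/32-1/\log\log|\Delta_K|$ stays above $1/64$; the term $(144000h_F^3R_F)^{64}$ then makes the corresponding error term at most $1/20$. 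Second, subconvexity is not used to control the period matrices through $L$-values directly: it bounds the average of $(|\Delta_K|^{1/2}/N([I]))^{1/2}$ over the coset $H$ (Proposition \ref{propsubconv}), which controls $\Tr\Im Z_{I,\mathrm{red}}$ on average via Lemma \ref{lem:lemma36HPexplicit}; the other essential archimedean input, which you omit entirely, is the explicit lower bound $|z_{12}|\geq\frac{2}{3}|\Delta_K|^{-1/2}$ of Proposition \ref{propminz12}, needed to bound $-\log|\chi_{10}(Z_{I,\mathrm{red}})|$ and which replaces the ineffective Liouville/Pila--Tsimerman and Zhang equidistribution inputs of Habegger--Pazuki. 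With these corrections your assembly is the paper's argument.
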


\begin{cor}\label{cor:main}
Fix a real quadratic field $F$ and assume GRH. Let $C$ be a curve of genus $2$ defined over $\overline{\Q}$ with good reduction everywhere and such that its jacobian has CM by $K$, where $K$ is a cyclic Galois CM field containing $F$. Then $$
\Delta_K\leq\max\{e^{e^{64}}, (144000h_F^3R_F)^{64}, e^{10(\frac{\gamma_F}{2}+\frac{1}{4}\log \Delta_F + 8.4R_F + 1.4{5})}\}.
$$
In particular, there are only a finite number of such curves.
\end{cor}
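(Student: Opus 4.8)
The plan is to obtain Corollary \ref{cor:main} as a direct consequence of Theorem \ref{thm:main}, the point being that the good reduction hypothesis makes the arithmetic term in the bound vanish. First I would note that a curve $C$ of genus $2$ over $\overline{\Q}$ with good reduction everywhere admits a model over some number field $k$ which extends to a smooth proper scheme over $\Spec \Ocal_k$ (potential good reduction is in any case automatic here by Serre--Tate \cite{ST68}, since $\Jac(C)$ has complex multiplication). For such a model there is no prime of bad reduction, so the minimal discriminant ideal $\Delta^0_{min}(C_k)$ is the unit ideal and $\operatorname{N}(\Delta^0_{min}(C_k)) = 1$; hence $\frac{1}{[k:\Q]}\log \operatorname{N}(\Delta^0_{min}(C_k)) = 0$.

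Applying Theorem \ref{thm:main} to this $C$ and this $k$, the third term inside the maximum becomes $e^{10(\frac{\gamma_F}{2} + \frac{1}{4}\log\Delta_F + 8.4R_F + 1.45)}$ while the first two are unchanged, which is precisely the asserted inequality
$$\Delta_K\leq\max\{e^{e^{64}}, (144000h_F^3R_F)^{64}, e^{10(\frac{\gamma_F}{2}+\frac{1}{4}\log \Delta_F + 8.4R_F + 1.45)}\}.$$

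For the finiteness assertion I would argue that, $F$ being fixed, the quantities $h_F$, $R_F$, $\Delta_F$ and $\gamma_F$ are fixed, so the right-hand side above is a constant $B = B(F)$, and every field $K$ that occurs satisfies $\Delta_K \le B$. Since $[K:\Q] = 4$, the Hermite--Minkowski theorem leaves only finitely many such fields $K$. For each of them, the principally polarized abelian surfaces over $\overline{\Q}$ with CM by $\Ocal_K$ form finitely many isomorphism classes --- they are parametrized by the finitely many CM types of $K$ together with finite ideal-class and polarization data --- and by the Torelli theorem each such surface is the Jacobian of at most one genus $2$ curve up to isomorphism. Hence only finitely many curves $C$ arise.

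I do not expect any genuine obstacle here: all the analytic and arithmetic work is concentrated in Theorem \ref{thm:main}. The only points needing a word of care are the reduction of the ``good reduction everywhere'' hypothesis to the vanishing of the discriminant term, and the invocation of the standard finiteness inputs (Hermite--Minkowski, and the classical finiteness of principally polarized abelian surfaces with a fixed CM order).
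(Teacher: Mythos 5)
Your proposal is correct and follows exactly the route the paper intends: the corollary is the specialization of Theorem \ref{thm:main} to the case $\operatorname{N}(\Delta^0_{min}(C_k))=1$, with finiteness then following from Hermite--Minkowski and the classical finiteness of principally polarized abelian surfaces with CM by a fixed maximal order. Nothing further is needed.
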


In addition of making the previous results explicit, our main contributions are the following: first in Subsection \ref{z12Disc} we give a explicit version of the \cite{HabeggerPazuki17} claim ``$|z_{12}|\geq H(Z)^{-4}$ and $H(Z)$ is polynomially bounded in $\Delta_K$ '' by providing the inequality $|z_{12}|\geq\frac{2}{3}|\Delta_K|^{-1/2}$. The idea is just tricky and smart manipulations of the reduced period matrix instead of using the Fundamental Theorem of Liouville (as in \cite{BombieriGubler}) and strong results of Tsimerman and Pila \cite{Pila}. We use this explicit version a second time in order to bound the contribution of the points of small height and we avoid having to use the non-explicit but deep equidistribution resuls of Zhang \cite[Cor. 3.3]{Zhang}. The second improvement is to control a term of the form $\sum_{[I] \in H} \left( \frac{|\Delta_K|^{1/2}}{N([I])} \right)^{1/2}$, where instead of using Michel and Venkatesh’s deep Theorem 1.1 \cite{MichelVenkatesh10}, we use explicit subconvexity results by Chandee \cite{Chandee09}. Again, it is worth noting that this approach necessitates an assumption of the GRH. The GRH assumption also arises in another context: in order to explicitly lower-bound the size of class group of $K$ in terms of its discriminant, we use the explicit version of the Brauer-Siegel Theorem by Louboutin \cite{Louboutin05}. Coming back to the previous term, in Questions \ref{question}, these estimates appear related to controlling on average (over almost the whole class group) the optimality of Minkowski bound on the integral ideals of minimal norms in a given class (for the quartic extensions $K$ considered) which is in our opinion of a strong independent interest. Finally, we fix the quadratic real field $F=\mathbb{Q}(\sqrt{5})$ and we obtain a bound for the discriminant of cyclic Galois quartic CM fields for which  genus 2 curves with CM by them  with potentially good reduction everywhere. Unfortunately, our bound becomes to big to run computations up to them, but we conjecture the corresponding finite list of such curves.

One could think of extending Theorem \ref{thm:main} by considering any quartic CM field $K$ and by considering larger genus. The condition $K$ cyclic Galois is used in several places, we stress this condition when needed, in particular, it is strongly used in the Colmez's formula for the Falting height in \ref{sec:heights}.
The approach should be completely reconsidered to include genus $3$ curves because of the hyperelliptic reduction that produces primes in the minimal discriminant not giving bad reduction \cite{LLLR}. Another possible generalisation could go in the direction of considering curves with CM by  general  orders of $K$: however, the maximality condition is strongly used in several places  as in the already mentioned Colmez's conjecture or on the bounds by Louboutin for class numbers.

The paper is structured as follows. In Section \ref{sec:setup} we present the set-up and the first definitions regarding abelian varieties with
complex multiplication as complex torus. In Section \ref{sec:lemmas} we give a explicit version of the needed technical lemmas bounding the entries of a reduced period matrix in terms of the discriminant of the CM quartic field. In Section \ref{sec:subconvex} we bound in a explicit way the average of the inverse of the minimal norm of elements of the class group via subvoncexity results à la Chandee. In this section we need to assume the GRH. 
In Section \ref{sec:boundh} we put all the precious results together to give a explicit upper-bound of the height  of a genus 2 jacobian with $CM$ by a cyclic quartic CM field with RM by a fixed real quadratic field $F$. This bound together with Proposition \ref{easyhbound} gives the proof of our main Theorem in Section \ref{sec:main}. Finally, in Section \ref{sec:comp} we fix the real field $F=\mathbb{Q}(\sqrt{5})$ and we make our bound explicit for this field. We perform numerical experiments up to a bound we can easily reach with our computers to list genus $2$ curves with good reduction everywhere and CM by a Galois cyclic CM field containing   $F$. We conjecture this list to be exhaustive.

\subsection*{Acknowledgement} We thank Philipp Habegger and Fabien Pazuki for insightful comments on a preliminary version of this article. We thank John Voight for ideas exchanges about the results in Subsection \ref{subsec:fields} to enumerate cyclic Galois CM quartic field containing a $\mathbb{Q}(\sqrt{5})$ and with bounded discriminant. We thank the Universit\"at G\"ottingen for the server capacity with which we conducted the computations in Subsection \ref{sec:comp}. The research of the first author is partially funded by the Swiss National Science foundation under the project number P2BSP2-181852. The research of the second author is partially funded by the IRGA project PointRatMod and by the project ANR JINVARIANT. The research of the third author is partially funded by the Melodia
ANR-20-CE40-0013 project and the 2023-08 Germaine de Sta\"el project.

\section{Set-up}\label{sec:setup}

We fix some notation as follows.

\begin{itemize}
    \item $k$ is a number field.
    \item $\Delta_k$ is the discriminant of a number field $k$.
    \item $R_k$ is the regulator of a number field $k$.
    \item $\zeta_k$ is the Dedekind Zeta-function of a field $k$.
    \item $F$ is a fixed real quadratic field.
    \item $K$ is a CM quartic Galois extension of $\Q$ containing $F$.
    \item $\Phi = (\phi_1,\phi_2)$ is a choice of CM type on $K$, and $
\Tr_{\Phi}(z) := \phi_1(z) + \phi_2(z) $ for all $z \in K$.
    \item $H$ will be a coset of $\Cl_K$ the class group of $K$.
    \item Fractional ideals of $K$ are denoted by $I$, and their class in $\Cl_K$ is denoted by $[I]$.
     \item $\mathbb{H}_2$ is the Siegel upper-half plane. General elements of $\mathbb{H}_2$ will be denoted as $Z = \begin{pmatrix} z_1 & z_{12} \\ z_{12} & z_2 \end{pmatrix}$ and for each index $i \in \{1,2,12\}$,
          $x_i = \Re(z_i)$ and $y_i = \Im(z_i)$.
      \item $\chi_{10}$ is the classical Siegel cusp form in  $\mathbb{H}_2$ equal to the product of the  squares of the 10 theta functions.
   
\end{itemize}

Furthermore, we define here the standard fundamental domain of $\mathbb{H}_2$ under the action of $\Sp_4(\mathbb{Z})$.

\begin{defi}\label{def:F2}
    For genus $2$, the \emph{fundamental domain} $\mathcal{F}_2$ is defined to be the set of $Z=X+iY \in \mathbb{H}_2$ for which
    \begin{itemize}
        \item[(S1)] the real part $X =  \begin{pmatrix} x_1 & x_{12} \\ x_{12} & x_2 \end{pmatrix} $ is reduced, i.e., $-\frac12 \leq x_i < \frac12$ $(i=1,2,12)$.
        \item[(S2)]the imaginary part $Y$ is $(\GL_2)-$reduced, i.e. $0\leq 2y_{12} \leq y_1 \leq y_2$, and
        \item[(S3)] $|\det M^* (Z)|\geq 1$ for all $M\in \Sp_4(\Z)$, where $M^*(Z)$ is defined by $$M^*(Z) = CZ+D \text{ for } M = \begin{pmatrix} A & B \\ C & D \end{pmatrix}\in \Sp_4(\Z)$$
    \end{itemize}
\end{defi}

Every point in $\mathbb{H}_2$ is $\Sp_4(\mathbb{Z})$-equivalent to a point in $\mathcal{F}_2$. This point is unique up to identifications of the boundaries of $\mathcal{F}_2$, see \cite[Lem. II.5.20]{Strengthesis}.

\begin{defi}[CM triple and associated Riemann form] 
    Let  $K$ be a CM quartic field. A \emph{CM triple} $(I,\xi,\Phi)$ on $K$ is given by a fractional ideal $I$ of $K$, a totally imaginary element $\xi$ of $K$ and a CM type $\Phi = (\phi_1,\phi_2)$ of $K$ such that $\xi \bar{I} I = \Dcal_{K/\Q}^{-1}$ the inverse of the different ideal and $\phi_1(\xi),\phi_2(\xi)$ have positive imaginary part.
       With such a triple, we define a $\R$-bilinear Riemann form\footnote{Our form $E$ equals $-E$ with the \cite{HabeggerPazuki17} notation, this is because we prefered $E$ to be left-sesquilinear insetad of not right-sesquilinear. However, with both notations the forms $H$ become equal since we use the definition  $H(z,w)=E(z,iw)+iE(z,w)$ instead of $H(z,w)=E(iz,w)+iE(z,w)$.} on $\C^2$ by
       \begin{equation}
    \label{eqdefE}
    E_{\Phi,\xi} (z,w) :=\sum_{j=1}^2 \phi_j(\xi) (z_j \overline{w_j} - \overline{z_j} w_j),
    \end{equation}
    and then for any $x, y \in K$,
    \begin{eqnarray}
    \label{eqpropE}
    E_{\Phi,\xi} (\Phi(x),\Phi(y)) & = & - \Tr_{K/\Q} (\xi \overline{x} y) = - 2 \Re \Tr_\Phi(\xi \overline{x} y) \\
    E_{\Phi,\xi} (\Phi(x),i\Phi(y)) & = & 2 \Im \Tr_\Phi(\xi \overline{x} y).
     \end{eqnarray}
     In other words, $E_{\Phi,\xi}$ is the imaginary part of the hermitian (definite positive form) $H_{\Phi,\xi}$ defined on $\C^2$ by 
\begin{eqnarray}
    \label{eqdefH}
    H_{\Phi,\xi}(z,w) & = & - 2 i \sum_{j=1}^2 \phi_j(\xi) \overline{z_j} w_j \\
    H_{\Phi,\xi}(\Phi(x),\Phi(y)) & = & (-2i) \Tr_\Phi(\xi \overline{x} y).
\end{eqnarray}
     By construction, the form  $E_{\Phi,\xi}$ is a symplectic form for the lattice $\Phi(I)$ in $\C^2$ with pfaffian 1, and thus $\C^2 / \Phi(I)$ is given a structure of principally polarised abelian surface over $\C$, with complex multiplication by $\Ocal_K$, denoted by $A_{I,\xi,\Phi}$.
\end{defi}

It is a classical and beautiful result by Shimura and Taniyama \cite{ST61}, known as the Main Theorem of the Complex Multiplication Theory, that implies that the previous description produce all the pricipally polarised abelian varieties with CM.

\begin{defi}[Reduced matrix associated to CM triple]
    For a CM triple $(I,\xi,\Phi)$, every choice of symplectic $\Z$-basis $B = (e_1, e_2,e_3,e_4)$ of $I$, i.e such that $$\Tr_{K/\Q} (\xi \overline{e_i} e_j) = \begin{cases} 0 \text{ if } |i-j| \neq 2\\1 \text{ if } j = i+2\\-1 \text{ if } j = i-2,\end{cases}$$ defines an element 
\begin{equation}
\label{eqdefZ}
Z_{I,\xi,\Phi,B}  = \begin{pmatrix} z_1 & z_{12} \\ z_{12} & z_2 \end{pmatrix} \in \mathbb{H}_2
\end{equation}
which is by definition the matrix of the vectors $\Phi(e_3),\Phi(e_4)$ in the $\C$-basis $(\Phi(e_1),\Phi(e_2))$. In other words, $z_1$, $z_{12}$ and $z_2$ are uniquely determined by
\begin{eqnarray*}
\Phi(e_3) & = & z_1 \Phi(e_1) + z_{12} \Phi(e_2) \\
\Phi(e_4) & = & z_{12} \Phi(e_1) + z_{2} \Phi(e_2) 
\end{eqnarray*}
In particular, $z_1,z_{12},z_2 \in K$. Changing bases amounts to the action of $\Sp_4(\Z)$ on $\mathbb{H}_2$, therefore a reduced choice of $Z$ (in the fundamental domain) corresponds to a choice of basis $B$ and we denote by  $Z_{I,\xi,\Phi, \textrm{red}}$ the reduced matrix (in $\Fcal_2$) thus obtained, dropping some indices if the notation if not ambiguous.
\end{defi}

\subsection{Faltings heights}\label{sec:heights}

We follow Faltings \cite{Falt83} to define the height of an abelian variety $A$ of dimension $g$ with Néron model $\mathcal{A}$ over $S=\operatorname{Spec}\mathcal{O}_k$ with zero section $\epsilon:\,S\rightarrow \mathcal{A}$.

\begin{defi} The stable Faltings height of $A$ is
$$
h(A):=\frac{1}{[k:\mathbb{Q}]\widehat{\operatorname{deg}}(\omega)}\text{ where }\omega=\epsilon^*\Omega^g_{\mathcal{A}/S}.
$$
\end{defi}

Colmez's conjecture \cite{Colmez93} gives a formula for the Faltings height of an abelian variety with complex
multiplication in terms of the derivative of certain Artin $L$-functions at $s = 0$. Colmez proved his conjecture for CM fields that are abelian extensions of $\mathbb{Q}$ under some ramification conditions. Obus \cite{Obus13} generalised the result by removing these conditions.

For more details see \cite[Sec. 4]{HabeggerPazuki17}. We directly state here the result we will use:

\begin{thm}[Cor. 4.6 + details as in Section 6 of \cite{HabeggerPazuki17}] Let $C$ be a genus $2$ curve defined over a number field $k$ and with CM by a Galois quartic CM field $K$. Then the Faltings height of its jacobian is decomposed as:
$$
h(\operatorname{Jac}(C))=h^0+{(}h_1^\infty+h_2^\infty+h_3^\infty+h_4^\infty{)/4}-\frac{4}{5}\log 2-\log \pi,
$$
where the finite part is
\begin{equation}\label{finitpart}
h^0=\frac{1}{60[k:\mathbb{Q}]}\log \operatorname{N}(\Delta^0_{min}(C))
\end{equation}
and the infinity parts are:
\begin{equation}\label{infinitypart}h^\infty_i = -\frac{1}{10 \#H} \sum_{[ I ]\in H} \log (|\chi_{10} (Z_{I,\textrm{red}})| \det(\Im Z_{I,\textrm{red}})^5),\end{equation}
where $Z_{I,\textrm{red}}$ is a reduced period matrix for a CM-triple $(I,\xi,\Phi)$ having the same CM-type than $\Jac(C)$ and  $H$ is a particular coset of $\Cl_K$. 
\end{thm}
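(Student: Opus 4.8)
The plan is to reduce the statement to two standard ingredients: the Arakelov-geometric decomposition of the Faltings height of a genus $2$ Jacobian, and Colmez's conjecture (in Obus's form) combined with the Main Theorem of Complex Multiplication. The former gives a decomposition $h(\Jac(C)) = (\text{finite part}) + (\text{archimedean part}) + (\text{universal constant})$ in which the finite part is read off from the stable discriminant $\Delta^0_{\min}(C)$ and the archimedean part from the Igusa cusp form $\chi_{10}$ evaluated at period matrices; the latter lets us rewrite the archimedean part, a priori a sum over the complex places of $k$, as an average over the four CM types of $K$ and over a coset of $\Cl_K$, and is the source of the factor $1/4$.

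First I would spell out the geometric decomposition. Let $\mathcal{C}\to S=\Spec\Ocal_k$ be the stable model of $C$ and $\mathcal{A}$ the Néron model of $\Jac(C)$; the canonical isomorphism identifies $\omega=\epsilon^*\Omega^2_{\mathcal{A}/S}$ with the top exterior power of the Hodge bundle of $\mathcal{C}/S$. Choosing the rational section $s$ of $\omega$ determined by a (minimal) Weierstrass model of $C$, the definition of the Faltings height gives
\[
h(\Jac(C))\;=\;\frac{1}{[k:\Q]}\log\#\!\left(\omega/\Ocal_k s\right)\;-\;\frac{1}{[k:\Q]}\sum_{\sigma:k\hookrightarrow\C}\log\|s\|_\sigma,
\]
with $\|s\|_\sigma^2=\frac{i^{4}}{2^{2}}\int_{\Jac(C)_\sigma(\C)}s_\sigma\wedge\overline{s_\sigma}$ the Faltings metric. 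The finite term is $\tfrac{1}{60[k:\Q]}\log\operatorname{N}(\Delta^0_{\min}(C))$: this is the genus $2$ analogue of the classical identity for elliptic curves and follows from the standard comparison between the minimal discriminant of a genus $2$ curve and the Hodge bundle of its Jacobian (cf.\ the work of Liu, Kausz, de Jong), the rational constant $\tfrac1{60}$ being forced by the weight $10$ of $\chi_{10}$ together with the normalization of $\Delta^0_{\min}$. For the archimedean term, writing $\Jac(C)_\sigma(\C)=\C^2/(\Z^2+Z_\sigma\Z^2)$ with $Z_\sigma\in\mathcal{F}_2$ reduced, a direct computation of the Faltings metric gives $\det(\Im Z_\sigma)$ for $dz_1\wedge dz_2$, while $s_\sigma$ differs from $dz_1\wedge dz_2$ by the product of the ten even theta constants; since $\chi_{10}$ is the product of the squares of these, one obtains $-\log\|s\|_\sigma=\tfrac1{10}\log\!\left(|\chi_{10}(Z_\sigma)|\det(\Im Z_\sigma)^5\right)+c_0$ with $c_0=\tfrac45\log2+\log\pi$ after bookkeeping. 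This already yields the stated identity with a single archimedean term summed over the complex places of $k$.

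Next I would convert that archimedean sum into the average appearing in the statement. By the Main Theorem of Complex Multiplication the $\overline{\Q}$-conjugates of $\Jac(C)$ are precisely the principally polarised abelian surfaces $A_{I,\xi,\Phi}$ attached to CM triples on $K$, with $[I]$ ranging over a coset $H$ of $\Cl_K$ and the CM type ranging over the four CM types of $K$ (which for $K/\Q$ cyclic quartic form two Galois-conjugate pairs); since the Faltings height is stable, hence invariant under $\overline{\Q}$-conjugation, the quantity $\log\!\left(|\chi_{10}(Z_{I,\mathrm{red}})|\det(\Im Z_{I,\mathrm{red}})^5\right)$ does not depend on $[I]\in H$, so it may be replaced by its average over $H$, that is by $-10\,h_i^\infty$. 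Finally, organizing Colmez's formula as in \cite[\S4]{HabeggerPazuki17} exhibits $h(\Jac(C))$ as the average over the four CM types of the corresponding expressions, producing the factor $1/4$ and the four terms $h_1^\infty,\dots,h_4^\infty$; this is exactly \cite[Cor.~4.6]{HabeggerPazuki17} together with the normalization discussion of \cite[\S6]{HabeggerPazuki17}.

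The main obstacle is not conceptual but one of exact bookkeeping: pinning down the two rational constants $\tfrac1{60}$ and $\tfrac1{10}$ and the additive constant $-\tfrac45\log2-\log\pi$ requires reconciling three independent normalizations — that of $\chi_{10}$ (product of the squares of the ten even theta constants, with its classical Fourier normalization), that of the Faltings metric, and that of the $\Gamma$- and $2\pi$-factors in Colmez's $L$-function formula — together with keeping track of the averaging factor $1/4$, which is the point corrected here relative to \cite[Eq.~6.1]{HabeggerPazuki17}. We do not redo this computation: we quote \cite[Cor.~4.6]{HabeggerPazuki17} and the explicit verifications of its Section 6.
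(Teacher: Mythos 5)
The paper gives no proof of this statement: it is imported wholesale (with the corrected averaging factor $1/4$) from Habegger--Pazuki, Cor.~4.6 and the computations of their Section~6. Your ultimate move --- quoting that corollary and its Section~6 bookkeeping --- is therefore exactly what the paper does, and your sketch of the two ingredients (the Arakelov-type decomposition of $h(\Jac(C))$ for genus $2$ into a finite part read off $\Delta^0_{\min}$ and an archimedean part read off $\chi_{10}$, then the CM parametrization of the Galois orbit producing the average over $H$ and the four CM types) is the correct outline, including the correct identification of where the $1/4$ comes from.

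One step of your sketch is wrong as stated, although it does not contaminate the citation-based conclusion. You assert that stability of the Faltings height makes $\log\bigl(|\chi_{10}(Z_{I,\mathrm{red}})|\det(\Im Z_{I,\mathrm{red}})^5\bigr)$ independent of $[I]\in H$, so that it ``may be replaced by its average over $H$.'' These archimedean local terms do depend on $[I]$ --- if they did not, the entire subconvexity apparatus of Sections~4--5 of the present paper, whose sole purpose is to control the average over $[I]\in H$ of $\bigl(|\Delta_K|^{1/2}/N([I])\bigr)^{1/2}$, would be vacuous. The correct mechanism is different: by the Main Theorem of CM the $\overline{\Q}$-conjugates of $\Jac(C)$ are the $A_{I,\xi,\Phi}$ with $[I]$ running over the coset $H$, so after enlarging $k$ the multiset of reduced period matrices $\{Z_\sigma\}_{\sigma:k\hookrightarrow\C}$ coincides with $\{Z_{I,\mathrm{red}}\}_{[I]\in H}$ counted with uniform multiplicity $[k:\Q]/\#H$ (per CM type); the sum over embeddings then \emph{becomes} the average over $H$ without any individual term being constant. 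This is precisely the displayed equation following the theorem in the paper (Habegger--Pazuki Eq.~6.1, corrected by the factor $1/4$), and your write-up should justify the passage to the average that way rather than by a spurious $[I]$-independence.
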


We do not need to precise the definition of the coset $H$ until Proposition \ref{deltalog}, so we will postpone it. One important thing to remark is (\cite[Eq. 6.1]{HabeggerPazuki17}))

\begin{equation}
{(}h_1^\infty+h_2^\infty+h_3^\infty+h_4^\infty{)/4}= -\frac{1}{10[k:\mathbb{Q}]} \sum_{\sigma:\,k\rightarrow \mathbb{C}} \log (|\chi_{10} (Z_{\,^\sigma I,\textrm{red}})| \det(\Im Z_{\,^\sigma I,\textrm{red}})^5).
\end{equation}

\section{Bounds on the entries of the reduced period matrix}\label{sec:lemmas}

We fix the notation from the set-up for the following section unless said otherwise. In particular as $Z = X+iY= \begin{pmatrix} z_1 & z_{12} \\ z_{12} & z_2 \end{pmatrix}\in \mathbb{H}_2$ and the fundamental domain $\mathcal{F}_2$ is given as in Definition \ref{def:F2}. We discuss in these section some bounds for the entries of the matrices $X$, $Y$ and $Z$. 

\subsection{Bounds on the diagonal coefficients of the imaginary part}
We mainly follow here the ideas in \cite[Sec. 5.1]{Strengthesis}. 

\begin{lem}\label{lem:34}
\label{mink}
For any $Y\in \mathcal{F}_2$,  $y_1y_2\leq \frac43 \det Y$.
\end{lem}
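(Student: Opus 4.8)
The plan is to reduce everything to the single inequality $4y_{12}^2 \le y_1 y_2$, which follows immediately from the $(\GL_2)$-reducedness condition (S2). Concretely, I would start from the identity $\det Y = y_1 y_2 - y_{12}^2$, valid for any symmetric $2\times 2$ matrix $Y = \begin{pmatrix} y_1 & y_{12} \\ y_{12} & y_2\end{pmatrix}$, and rewrite it as $y_1 y_2 = \det Y + y_{12}^2$. Thus the claimed bound $y_1 y_2 \le \tfrac43 \det Y$ is equivalent to $y_{12}^2 \le \tfrac13 \det Y$, i.e.\ to $3y_{12}^2 \le y_1y_2 - y_{12}^2$, i.e.\ to $4y_{12}^2 \le y_1 y_2$.

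Next I would verify $4y_{12}^2 \le y_1 y_2$ using (S2). Since $Y$ lies in $\mathcal F_2$, condition (S2) gives $0 \le 2y_{12} \le y_1 \le y_2$. In particular $0 \le 2y_{12} \le y_1$ and $0 \le 2y_{12} \le y_2$, and multiplying these two inequalities between nonnegative reals yields $(2y_{12})^2 \le y_1 y_2$, that is, $4y_{12}^2 \le y_1 y_2$. (Positive-definiteness of $Y$ guarantees $y_1, y_2 > 0$, so there is no degenerate case to worry about.)

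Finally I would assemble the two observations: from $4y_{12}^2 \le y_1 y_2$ we get $y_{12}^2 \le \tfrac14 y_1 y_2$, hence
$$
y_1 y_2 = \det Y + y_{12}^2 \le \det Y + \tfrac14 y_1 y_2,
$$
which rearranges to $\tfrac34 y_1 y_2 \le \det Y$, i.e.\ $y_1 y_2 \le \tfrac43 \det Y$, as desired.

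There is essentially no obstacle here: the statement is a direct consequence of the reducedness condition (S2), and the only "content" is the elementary manipulation $\det Y = y_1 y_2 - y_{12}^2$ together with the observation that $2y_{12}$ is bounded above by both diagonal entries. The constant $\tfrac43$ is sharp, attained in the limit $2y_{12} = y_1 = y_2$.
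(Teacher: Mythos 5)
Your proof is correct and is essentially the paper's own argument: both rest on $\det Y = y_1y_2 - y_{12}^2$ together with the bound $4y_{12}^2 \le y_1y_2$ extracted from condition (S2). The only difference is presentational (you rearrange the identity rather than bounding $\det Y$ directly), so there is nothing further to add.
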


\begin{proof}
For $Y$ $\SL_2$-reduced we have 
$\det Y = y_1 y_2 - y_{12}^2\geq y_1y_2-\frac{y_1y_2}{4} =\frac{3}{4}y_1y_2$ where the inequality comes from condition (S2) in Definition \ref{def:F2}.

\end{proof}

\begin{lem}\label{lem:det} For $Y\in\mathcal{F}_2$, $\det(Y)\geq 9/8$.
\end{lem}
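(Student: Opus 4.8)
The claim is that the imaginary part $Y$ of any point $Z$ in the Siegel fundamental domain $\mathcal{F}_2$ has determinant at least $9/8$. This is a purely geometric fact about the fundamental domain; it is the genus-$2$ analog of the familiar bound $\Im(z) \geq \sqrt{3}/2$ for the standard fundamental domain of $\SL_2(\Z)$ on $\mathbb{H}_1$. My plan is to exploit condition (S3) in Definition~\ref{def:F2} for a cleverly chosen element $M \in \Sp_4(\Z)$, combined with the $\GL_2$-reducedness (S2) and the real-part normalization (S1).

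First I would recall that for the block $M = \begin{pmatrix} A & B \\ C & D \end{pmatrix}$ with $C = D = \mathrm{Id}$ chosen appropriately (more precisely, taking an $M$ whose lower blocks implement $Z \mapsto Z + \text{(integer symmetric)}$ composed with the "inversion" $Z \mapsto -Z^{-1}$-type moves on a $1\times 1$ sub-block), condition (S3) forces $|\det(CZ + D)| \geq 1$. The key is to pick $M$ so that $\det(CZ+D)$ is, up to the bounded real corrections controlled by (S1), essentially the quantity $z_1$ (or a suitable minor). Applying (S3) to the element of $\Sp_4(\Z)$ that acts as $z_1 \mapsto -1/z_1$ on the first coordinate and trivially elsewhere gives $|z_1| \geq 1$, hence $|z_1|^2 = x_1^2 + y_1^2 \geq 1$; since $|x_1| \leq 1/2$ by (S1), this yields $y_1^2 \geq 3/4$, i.e. $y_1 \geq \sqrt{3}/2$. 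By the $\GL_2$-ordering $y_1 \leq y_2$ in (S2), we also get $y_2 \geq \sqrt{3}/2$.

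Then I would combine this with Lemma~\ref{lem:34}: we have $\det Y \geq \frac{3}{4} y_1 y_2 \geq \frac{3}{4} \cdot \frac{3}{4} = \frac{9}{16}$ — but this only gives $9/16$, not $9/8$. So a single coordinate bound is not enough, and the real content must come from applying (S3) to an element of $\Sp_4(\Z)$ that genuinely mixes the two coordinates. The right move is to use (S3) with an $M$ for which $\det(CZ + D)$ equals (up to unit and the $(S1)$-bounded real shifts) the full determinant $\det Z = z_1 z_2 - z_{12}^2$, or a closely related $2\times 2$ minor; this is the symplectic analog of the transformation $Z \mapsto -Z^{-1}$. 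From $|\det(CZ+D)| \geq 1$ for that choice, together with the elementary inequality relating $|\det Z|$ to $\det Y$ and the real part $X$ — namely expanding $\det Z = \det(X+iY)$ and using (S1) to bound $|x_i| \leq 1/2$ and (S2) to bound $|x_{12}| \leq 1/2$ — one extracts the sharper constant $9/8$.

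The main obstacle I anticipate is the bookkeeping in the last step: one must carefully identify which element $M \in \Sp_4(\Z)$ to plug into (S3), expand $|\det(CZ+D)|^2 \geq 1$ into its real and imaginary parts in terms of $x_1, x_2, x_{12}, y_1, y_2, y_{12}$, and then use (S1) and (S2) to see that the worst case of this constrained optimization is exactly $\det Y = 9/8$ (attained, as for the classical picture, at a corner of $\mathcal{F}_2$). An alternative, possibly cleaner route — which I would try first — is to cite the structure of $\mathcal{F}_2$ directly: it is classical (Gottschling) that the minimum of $\det Y$ over $\mathcal{F}_2$ is attained at an explicit vertex, and one simply evaluates $\det Y$ there; the reference \cite{Strengthesis}, which the authors are already following in this subsection, should contain exactly this computation, so the proof may well reduce to a short citation plus the arithmetic $\frac{3}{4} y_1 y_2$ estimate from Lemma~\ref{lem:34}.
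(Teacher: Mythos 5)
Your first two steps --- (S3) for the copy of the $\SL_2(\Z)$-inversion acting on the $z_1$-coordinate gives $|z_1|\geq 1$, hence $y_1\geq\sqrt{3}/2$ by (S1), then $y_2\geq y_1$ by (S2) and $\det Y\geq\tfrac34 y_1y_2\geq\tfrac{9}{16}$ via Lemma~\ref{lem:34} --- are in fact the \emph{entirety} of the paper's proof, which asserts ``the result follows'' at exactly the point where you correctly observe that one only obtains $9/16$. (Your identification of the relevant $M$ is even more careful than the paper's, which writes ``$M=\operatorname{Id}$'', for which (S3) is vacuous.) So your diagnosis that the written argument falls short of $9/8$ is correct and is the key observation here.

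The further step you propose, however, cannot be carried out, because the constant $9/8$ is not attainable: the point $Z=e^{2\pi i/3}\,\mathrm{Id}_2=\diag\bigl(-\tfrac12+i\tfrac{\sqrt3}{2},\,-\tfrac12+i\tfrac{\sqrt3}{2}\bigr)$ satisfies (S1) and (S2), and it satisfies (S3) as well (it is the standard maximally symmetric corner of $\mathcal{F}_2$, corresponding to the square of the elliptic curve with $j=0$; all of Gottschling's inequalities hold there, several with equality). At that point $\det Y=3/4<9/8$. Hence no choice of $M$ in (S3), and no appeal to the vertex structure of $\mathcal{F}_2$, will yield $9/8$: the minimum of $\det Y$ over $\mathcal{F}_2$ is at most $3/4$, and what the elementary argument certifies is $9/16$. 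The gap is therefore in the statement rather than in your plan. The right fix is to stop at $\det Y\geq 9/16$ (or cite the sharp value $3/4$); the only downstream use of the lemma is the term $-\tfrac12\log\tfrac98$ in Section~\ref{sec:boundh}, which then becomes $-\tfrac12\log\tfrac{9}{16}$ (resp.\ $-\tfrac12\log\tfrac34$) and slightly worsens the numerical constant $0.74$ in \eqref{finalbh}.
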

\begin{proof} Condition (S2) in Definition \ref{def:F2} implies $y_1\leq y_2$. On the other hand, (S3) with $M=\operatorname{Id}$ implies $|z_1|\geq1$, so (S1) gives $y_1\geq \sqrt{3/4}$ and the result follows. 
\end{proof}

\subsection{Lower bounds on the off-diagonal coefficients}\label{z12Disc}

The goal of this subsection is to prove Proposition \ref{propminz12}, a completely explicit version of the bound on off-diagonal coefficients of the reduced matrices, a key improvement on \cite[Eq. (6.5)]{HabeggerPazuki17}. 

As it turns, this bound is (by a very short margin) enough to obtain the main result and has the advantage of being completely effective, as opposed to the same flavour results used in \cite{HabeggerPazuki17}: First,  the Fundamental Theorem of Liouville \cite{BombieriGubler} and a powerful result by Tsimerman and Pila \cite{Pila}, and secondly, the strong equidistribution results by Zhang \cite{Zhang}, see \cite[p. 2568-2569]{HabeggerPazuki17}. 

\begin{prop}
\label{propminz12}
For a reduced matrix $Z  = \begin{pmatrix} z_1 & z_{12} \\ z_{12} & z_2 \end{pmatrix} 
$ corresponding to a choice of CM triple $(I,\xi,\Phi)$, we have 
\[
|z_{12}| \geq {\frac{2}{3}} |\Delta_K|^{-1/2}.
\]
\end{prop}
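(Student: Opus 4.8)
The plan is to work directly with the arithmetic of the CM triple and avoid any transcendence input. Recall that $z_1, z_{12}, z_2 \in K$ by construction, since they are the coordinates of $\Phi(e_3), \Phi(e_4)$ in the basis $(\Phi(e_1), \Phi(e_2))$, and that these coordinates are obtained $\C$-linearly from the embeddings $\phi_1, \phi_2$. The key observation is that $z_{12}$ (or rather a suitable algebraic quantity built from it) is a nonzero element of a number field whose degree and whose "other conjugates" we can control, so that the product formula forces $|z_{12}|$ not to be too small in terms of $\Delta_K$.

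First I would express $z_{12}$ explicitly. Using $\Phi(e_3) = z_1 \Phi(e_1) + z_{12}\Phi(e_2)$ and $\Phi(e_4) = z_{12}\Phi(e_1) + z_2 \Phi(e_2)$ together with the relations defining a symplectic basis, namely $\Tr_{K/\Q}(\xi \overline{e_i} e_j)$ equals $0$, $1$ or $-1$ according to $|i-j|$, one can solve for $z_{12}$ as a ratio of $2\times 2$ determinants in the $\phi_j(e_i)$, i.e. as a ratio whose numerator and denominator are (up to sign) of the shape $\phi_1(a)\phi_2(b) - \phi_1(b)\phi_2(a)$ for $e_i \in I$. More precisely, writing $\Phi(e_i) = (\phi_1(e_i), \phi_2(e_i))$, Cramer's rule gives $z_{12} = \dfrac{\det\begin{pmatrix}\phi_1(e_1) & \phi_1(e_3)\\ \phi_2(e_1) & \phi_2(e_3)\end{pmatrix}}{\det\begin{pmatrix}\phi_1(e_1) & \phi_1(e_2)\\ \phi_2(e_1) & \phi_2(e_2)\end{pmatrix}}$. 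The crucial point is that the denominator $\det\begin{pmatrix}\phi_1(e_1) & \phi_1(e_2)\\ \phi_2(e_1) & \phi_2(e_2)\end{pmatrix}$ is, up to a unit factor coming from the pfaffian-$1$ normalization, essentially $\pm \tfrac{1}{2}$ times something expressible via the discriminant: indeed the symplectic condition with pfaffian $1$ pins down $|\det(\Phi(e_1),\Phi(e_2))|^2 \det(\Im Z)$ in terms of $\disc$, and the Riemann form relations $E_{\Phi,\xi}(\Phi(e_i),\Phi(e_j)) = -\Tr_{K/\Q}(\xi\overline{e_i}e_j)$ control the relevant bilinear quantities. Similarly the numerator, being a $\Z$-bilinear antisymmetric combination of embeddings of elements of $I$, is (up to sign) $\tfrac12 E_{\Phi,\xi}(\Phi(e_1),i\Phi(e_3))$-type data plus real-part data, hence expressible through $\Tr_\Phi(\xi\overline{e_1}e_3) \in K$; being nonzero and algebraic, its norm is a nonzero algebraic integer (after clearing the denominator $\Dcal_{K/\Q}^{-1}$ in $\xi\overline{I}I$), so its absolute norm over $\Q$ is at least $1$ in absolute value. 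Estimating the remaining archimedean conjugates of that algebraic integer by $O(1)$ quantities (using the reducedness of $Z$, in particular $\det Y \geq 9/8$ from Lemma \ref{lem:det} and $y_1 y_2 \leq \tfrac43\det Y$ from Lemma \ref{lem:34}, to bound $|\phi_j(e_i)|$) then yields the lower bound $|z_{12}| \geq \tfrac23 |\Delta_K|^{-1/2}$, where the constant $\tfrac23$ is traced through the pfaffian normalization and the factor $\tfrac34$ appearing in Lemma \ref{lem:34}.

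I expect the main obstacle to be bookkeeping rather than conceptual: correctly identifying which algebraic integer plays the role of "numerator", clearing exactly the different $\Dcal_{K/\Q}$ so that $\Tr_{K/\Q}(\xi \overline{e_1} e_3)$-type expressions become integral, and then tracking the constants so that the product-formula estimate $\prod_v |\,\cdot\,|_v \geq 1$ degrades only to the clean constant $\tfrac23$ and the clean exponent $-\tfrac12$ on $|\Delta_K|$. The exponent $-1/2$ should come out as $\disc(K)$ entering the denominator determinant as $|\Delta_K|^{1/2}$ (the covolume of $\Phi(I)$ relative to $\Phi(\Dcal_{K/\Q}^{-1})$), so the delicate part is verifying that the numerator contributes only a bounded (indeed $\geq \tfrac23$ after using $\det Y \geq 9/8$) factor and never a negative power of $\Delta_K$. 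A secondary check is that the argument is uniform over the choice of CM triple $(I,\xi,\Phi)$ and of reduced basis $B$, which follows since all quantities involved transform by $\Sp_4(\Z)$ and the estimates only use membership in $\Fcal_2$.
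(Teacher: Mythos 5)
Your underlying philosophy --- produce a nonzero algebraic number attached to the period matrix that is integral up to the different $\Dcal_{K/\Q}$, so that its norm is at least $|\Delta_K|^{-1}$, and then control the remaining archimedean factors via reducedness --- is exactly the engine of the paper's argument (Lemma \ref{lem:Tr}). However, the concrete plan has three genuine gaps. First, the Cramer numerator $\phi_1(e_1)\phi_2(e_3)-\phi_2(e_1)\phi_1(e_3)$ is $\C$-bilinear and antisymmetric, whereas $\Tr_\Phi(\xi\overline{e_1}e_3)=\phi_1(\xi)\overline{\phi_1(e_1)}\phi_1(e_3)+\phi_2(\xi)\overline{\phi_2(e_1)}\phi_2(e_3)$ is sesquilinear; these are different forms, and the numerator is not ``expressible through'' that type trace. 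The paper instead isolates $z_{12}$ from the \emph{Hermitian} pairing $H_{\Phi,\xi}(\Phi(e_2),\Phi(e_3))=z_{12}H_{\Phi,\xi}(\Phi(e_2),\Phi(e_2))+z_1H_{\Phi,\xi}(\Phi(e_2),\Phi(e_1))$, which only extracts $z_{12}$ cleanly when $\Phi(e_1)\perp\Phi(e_2)$, i.e.\ when $y_{12}=0$. This forces a case split that your proposal lacks: if $y_{12}\neq 0$ one bounds $|z_{12}|\geq|y_{12}|$ directly via $\Tr_\Phi(\xi\overline{e_1}e_2)=\pm\tfrac{i}{2}H_{\Phi,\xi}(\Phi(e_1),\Phi(e_2))$, and only if $y_{12}=0$ does one pass to $\Tr_\Phi(\xi\overline{e_2}e_3)=i z_{12}/y_2$.

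Second, you assert the numerator is nonzero without justification. Its vanishing is equivalent to $z_{12}=0$, i.e.\ to the polarized surface splitting as a product of elliptic curves; ruling this out requires the primitivity of the CM type, hence the cyclicity of $K$, which the paper invokes explicitly. This is a conceptual input, not bookkeeping. Third, reducedness of $Z$ does \emph{not} bound the individual $|\phi_j(e_i)|$: membership of $Z$ in $\Fcal_2$ constrains only the shape of the lattice $\Phi(I)$, while the absolute sizes of the $\phi_j(e_i)$ depend on $N(I)$ and $\xi$. The paper circumvents this by working throughout with the pfaffian-normalized quantities $H_{\Phi,\xi}(\Phi(e_1),\Phi(e_1))=y_2/\det Y$, etc., together with a Cauchy--Schwarz step bounding $|\phi_1(x')|+|\phi_2(x')|$ by $\tfrac12\bigl(H_{\Phi,\xi}(\Phi(a),\Phi(a))+H_{\Phi,\xi}(\Phi(b),\Phi(b))\bigr)$, which is then at most $3y_2$ by reducedness. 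Without this normalization your ``bound the remaining conjugates by $O(1)$'' step does not go through. Finally, note that the trace lower bound itself bifurcates according to whether $\Im\phi_1(x')$ and $\Im\phi_2(x')$ have the same sign (giving $|\Delta_K|^{-1/4}$ versus $|\Delta_K|^{-1/2}$ divided by the Hermitian diagonal), another case distinction absent from your outline.
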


Before proving the previous proposition, we will need to set some technical lemmas. 
Until the end of this section, we fix the CM triple $(I,\xi,\Phi)$ and $Z$ is the corresponding associated reduced matrix.

\begin{lem}
    With the notation above, we have 
    \begin{eqnarray}
\label{eqpropy1y2Tr}
\frac{y_2}{\det Y} & = & \frac{2}{i}  \Tr_\Phi(\xi \overline{e_1} e_1) = H_{\Phi,\xi}(\Phi(e_1),\Phi(e_1)), \\
\frac{y_1}{\det Y}&  = & \frac{2}{i}  \Tr_\Phi(\xi \overline{e_2} e_2) =H_{\Phi,\xi}(\Phi(e_2),\Phi(e_2)), \\
\frac{y_{12}}{\det Y} & = & \frac{-2}{i}  \Tr_\Phi(\xi \overline{e_1} e_2) = - H_{\Phi,\xi}(\Phi(e_1),\Phi(e_2)).
\end{eqnarray}
\end{lem}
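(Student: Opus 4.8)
The plan is to unwind the definition of the reduced matrix $Z = Z_{I,\xi,\Phi,B}$ in terms of the symplectic basis $B = (e_1,e_2,e_3,e_4)$ of $I$ and to relate the entries $z_1,z_{12},z_2$ and the imaginary part $Y$ to values of the hermitian form $H_{\Phi,\xi}$. Recall from the defining relations
\begin{align*}
\Phi(e_3) &= z_1 \Phi(e_1) + z_{12}\Phi(e_2),\\
\Phi(e_4) &= z_{12}\Phi(e_1) + z_2 \Phi(e_2),
\end{align*}
so that, writing $\Phi(e_1),\Phi(e_2)$ as a $\C$-basis of $\C^2$, the pair $(\Phi(e_1),\Phi(e_2))$ together with $Z$ determines everything. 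The key mechanism is that the symplectic normalization $\Tr_{K/\Q}(\xi\overline{e_i}e_j) = E_{\Phi,\xi}(\Phi(e_i),\Phi(e_j))$ takes the standard values $0,\pm1$, and since $E_{\Phi,\xi} = \Im H_{\Phi,\xi}$ while $H_{\Phi,\xi}(\Phi(x),\Phi(x))$ is real (the form is Hermitian and positive definite), the diagonal values $H_{\Phi,\xi}(\Phi(e_1),\Phi(e_1))$ and $H_{\Phi,\xi}(\Phi(e_2),\Phi(e_2))$ are genuine positive real numbers that I need to identify with the claimed ratios $y_2/\det Y$ and $y_1/\det Y$.

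Concretely, first I would express $H_{\Phi,\xi}$ in the basis $(\Phi(e_1),\Phi(e_2))$ of $\C^2$: let $G$ be the $2\times 2$ positive-definite Hermitian Gram matrix with entries $G_{k\ell} = H_{\Phi,\xi}(\Phi(e_k),\Phi(e_\ell))$ for $k,\ell\in\{1,2\}$. The symplectic conditions on $e_1,e_2,e_3,e_4$ together with the expansions of $\Phi(e_3),\Phi(e_4)$ above pin down $G$ in terms of $Z$: writing out $E_{\Phi,\xi}(\Phi(e_1),\Phi(e_3)) = 1$, $E_{\Phi,\xi}(\Phi(e_2),\Phi(e_4)) = 1$, $E_{\Phi,\xi}(\Phi(e_1),\Phi(e_4)) = E_{\Phi,\xi}(\Phi(e_2),\Phi(e_3)) = 0$ and using bilinearity plus $\Im H$, one gets that the imaginary parts produced by multiplying $G$ by the columns of $Z$ recover the identity matrix. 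This is precisely the classical computation showing that for a period matrix in the form $(\,\mathrm{I}_2 \mid Z\,)$ the Gram matrix of the polarization on the "first half" lattice vectors is $Y^{-1} = (\Im Z)^{-1}$. Hence $G = Y^{-1}$, and reading off the entries of the inverse of a symmetric $2\times2$ matrix gives exactly
\[
G_{11} = \frac{y_2}{\det Y},\qquad G_{22} = \frac{y_1}{\det Y},\qquad G_{12} = \frac{-y_{12}}{\det Y},
\]
which is the content of the three displayed equations. The middle expression $\frac{2}{i}\Tr_\Phi(\xi\overline{e_1}e_1)$ etc. then follows immediately from the identity $H_{\Phi,\xi}(\Phi(x),\Phi(y)) = (-2i)\Tr_\Phi(\xi\overline{x}y)$ recorded in the definition, specialized to $x=y=e_1$ (resp. $e_2$, resp. $x=e_1,y=e_2$), noting $(-2i) = 2/i$ after the sign bookkeeping; the extra minus sign in the third line comes from the $-H_{\Phi,\xi}(\Phi(e_1),\Phi(e_2))$ versus $\Tr_\Phi(\xi\overline{e_1}e_2)$ comparison and matches $G_{12} = -y_{12}/\det Y$.

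I expect the only real friction to be the sign and normalization bookkeeping: the paper uses a convention for $E$ that differs by a sign from Habegger--Pazuki (as flagged in the footnote), and one must be careful that $H(z,w) = E(z,iw) + iE(z,w)$ rather than the other convention, so that the Hermitian form comes out positive definite and the Gram matrix comes out as $+Y^{-1}$ rather than with spurious signs. The cleanest way to avoid errors is to do the linear-algebra identity $G = (\Im Z)^{-1}$ abstractly for a symplectic basis normalized as in the statement — i.e. verify it for the "standard" lattice $\Z^2 + Z\Z^2$ with the standard symplectic form and then transport via $\Phi$ — and only at the very end substitute the trace formula $H_{\Phi,\xi}(\Phi(x),\Phi(y)) = -2i\,\Tr_\Phi(\xi\overline{x}y)$ to get the middle equalities. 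The inversion of a $2\times2$ matrix and matching entries is then entirely routine.
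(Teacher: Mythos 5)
Your proposal is correct and follows essentially the same route as the paper: the paper's entire proof is the observation that $Y^{-1}$ is the Gram matrix of $H_{\Phi,\xi}$ in the basis $(\Phi(e_1),\Phi(e_2))$ (citing Debarre for this general fact on Riemann forms), after which one reads off the entries of the inverse of the symmetric $2\times 2$ matrix $Y$ and applies $H_{\Phi,\xi}(\Phi(x),\Phi(y)) = -2i\,\Tr_\Phi(\xi\overline{x}y)$ exactly as you do. The only difference is that you propose to verify the identity $G = Y^{-1}$ directly from the symplectic normalization rather than citing it, which is a routine and valid substitution.
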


Notice that all these type traces are purely imaginary because $\Phi(e_1),\Phi(e_2)$ are orthogonal for $E_{\Phi,\xi}$.

 \begin{proof}
     It is enough to prove that $Y^{-1}$ is the matrix of the hermitian form in the basis $(\Phi(e_1),\Phi(e_2))$ of $\C^2$, and this is a general fact on Riemann forms (see for instance the proof of Theorem 1.3 of Chapter VI in \cite{Debarre99}). 
 \end{proof}

\begin{lem}\label{lem:Tr} 
For any $a,b \in I$ such that $\Tr_{\Phi}(\xi \overline{a} b) \neq 0$ and is purely imaginary, we have 
$|\Tr_{\Phi}(\xi \overline{a} b)|  \geq \begin{cases}
|\Delta_K|^{-1/4}  &  \textrm{if } \Im \phi_1 (\xi \overline{a}b) \textrm{ and } \Im \phi_2(\xi \overline{a} b) \textrm{ have the same sign,} \\ 
 \frac{2|\Delta_K|^{-1/2}}{ H_{\Phi,\xi}(\Phi(a),\Phi(a)) +H_{\Phi,\xi}(\Phi(b),\Phi(b)) } & \textrm{otherwise}.
 \end{cases}
$

\end{lem}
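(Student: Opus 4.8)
The plan is to reduce everything to a lower bound on the norm of a nonzero element of a fractional ideal, together with an arithmetic–geometric mean (or Cauchy–Schwarz) manipulation. Write $t := \Tr_{\Phi}(\xi\overline a b)$, which is purely imaginary by hypothesis, say $t = i\tau$ with $\tau \in \R^\times$. The two summands $\phi_1(\xi\overline a b)$ and $\phi_2(\xi\overline a b)$ have imaginary parts $\tau_1,\tau_2$ with $\tau_1+\tau_2 = \tau$, while their real parts sum to $0$. The key observation is that the four archimedean absolute values of the element $\xi\overline a b \in K$ are exactly $|\phi_1(\xi\overline a b)|, |\phi_2(\xi\overline a b)|$ each counted twice (the complex places), so by the product formula / the fact that $\xi\overline a b \in \xi\bar I I \cdot (\text{something}) = \Dcal_{K/\Q}^{-1}\cdot(\dots)$ one gets a lower bound on $|\phi_1(\xi\overline a b)|\cdot|\phi_2(\xi\overline a b)|$. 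Precisely, since $\xi\bar I I = \Dcal_{K/\Q}^{-1}$ and $a,b \in I$, the element $\xi\overline a b$ lies in $\overline I I \xi = \Dcal_{K/\Q}^{-1}$; hence $\operatorname{N}_{K/\Q}(\xi\overline a b)$ is a nonzero rational integer multiple of $\operatorname{N}(\Dcal_{K/\Q})^{-1} = |\Delta_K|^{-1}$ divided by... — more carefully, $(\xi\overline a b)\Dcal_{K/\Q}$ is an integral ideal, so $|\operatorname{N}_{K/\Q}(\xi\overline a b)|\cdot|\Delta_K| \geq 1$, i.e. $|\operatorname{N}_{K/\Q}(\xi\overline a b)| \geq |\Delta_K|^{-1}$. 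Since this norm equals $|\phi_1(\xi\overline a b)|^2|\phi_2(\xi\overline a b)|^2$, we obtain
\[
|\phi_1(\xi\overline a b)|\,|\phi_2(\xi\overline a b)| \;\geq\; |\Delta_K|^{-1/2}.
\]

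Now split into the two cases. If $\tau_1,\tau_2$ have the same sign, then $|\tau| = |\tau_1| + |\tau_2| \geq 2\sqrt{|\tau_1\tau_2|}$ by AM–GM, and since $|\tau_j| \leq |\phi_j(\xi\overline a b)|$ we could only conclude $|\tau|\geq 2|\Delta_K|^{-1/4}$ this way — but we want the sharper $|t|\geq|\Delta_K|^{-1/4}$ with a cleaner argument: here I expect one uses instead that when the signs agree the real parts, which sum to zero, force $\operatorname{Re}\phi_1 = -\operatorname{Re}\phi_2$, and a direct estimate $|\tau_1|\,|\tau_2| \geq$ (product of the two complex absolute values) $\cdot$ (something $\geq 1$) — or more simply, one bounds $|t|^2 = (\tau_1+\tau_2)^2 \geq \tau_1^2 + \tau_2^2 \geq$ ... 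I would reexamine: actually $(\tau_1+\tau_2)^2 = \tau_1^2 + 2\tau_1\tau_2 + \tau_2^2$, and with $\tau_1\tau_2 > 0$ and using $|\phi_j|^2 = \tau_j^2 + (\operatorname{Re}\phi_j)^2$ together with $\operatorname{Re}\phi_1 = -\operatorname{Re}\phi_2$ one gets $|\phi_1|^2|\phi_2|^2 = (\tau_1^2 + r^2)(\tau_2^2+r^2)$ where $r = |\operatorname{Re}\phi_1|$; I would then argue $(\tau_1+\tau_2)^4 \geq 4\tau_1\tau_2(\tau_1+\tau_2)^2 \geq \dots$ — the honest thing is that when $\tau_1,\tau_2 > 0$ one has $(\tau_1+\tau_2)^2 \geq \tau_1^2+\tau_2^2 \geq 2|\tau_1\tau_2|$ hmm, this gives $|t|\geq\sqrt2|\Delta_K|^{-1/4}$ only if $r=0$. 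The point I would pin down is that the "same sign" case is where one can afford to throw away cross terms, and the correct clean bound $|t| \geq |\Delta_K|^{-1/4}$ follows from $|t|^2 = (\tau_1+\tau_2)^2 \geq 4\tau_1\tau_2 \geq 4|\phi_1(\xi\overline ab)||\phi_2(\xi\overline ab)| - (\text{corrections})$; I will need to carry the $\operatorname{Re}$ terms honestly, but conceptually this reduces to: with equal signs the imaginary parts "dominate", and $4\tau_1\tau_2 \geq |\phi_1||\phi_2|$ is false in general, so in fact I suspect the paper uses $|t| \geq \max(|\tau_1|,|\tau_2|) \geq \sqrt{|\tau_1\tau_2|} \geq$... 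The safe plan: treat $\xi\overline a b$'s norm bound as the single arithmetic input and then do the elementary $2$-variable inequality in each case separately, being careful about which term to discard.

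In the "opposite signs" case, we instead write $|t| = |\tau_1 - |\tau_2||$ type cancellation is possible, so we cannot discard cross terms; instead use
\[
|t| \;=\; \frac{|\tau_1+\tau_2|\cdot(\text{denominator})}{\text{denominator}},
\]
and the right move is: by the lemma's own formulas $H_{\Phi,\xi}(\Phi(a),\Phi(a)) = \tfrac{2}{i}\Tr_\Phi(\xi\overline a a) = 2|\phi_1(\xi)|\,|a|_{\phi_1}^2$-type quantities, i.e. $H_{\Phi,\xi}(\Phi(a),\Phi(a)) + H_{\Phi,\xi}(\Phi(b),\Phi(b)) \geq 2\sqrt{H(\Phi a,\Phi a)H(\Phi b,\Phi b)} \geq 2\cdot 2|\phi_1(\xi\overline a b)|$-ish, combined with Cauchy–Schwarz $|H_{\Phi,\xi}(\Phi(a),\Phi(b))| \leq \sqrt{H(\Phi a)H(\Phi b)}$ — wait, we want a \emph{lower} bound on $|t|$, so the logic is: $|\phi_1(\xi\overline a b)|\,|\phi_2(\xi\overline a b)| \geq |\Delta_K|^{-1/2}$ from the norm bound, and $|\phi_j(\xi\overline a b)| \leq \tfrac12\big(H_{\Phi,\xi}(\Phi(a),\Phi(a)) + H_{\Phi,\xi}(\Phi(b),\Phi(b))\big) =: S$ by AM–GM applied to $|\phi_j(\xi)||\phi_j(a)||\phi_j(b)| \leq |\phi_j(\xi)|\cdot\tfrac12(|\phi_j(a)|^2+|\phi_j(b)|^2)$ and summing over... so each $|\phi_j(\xi\overline a b)| \leq S$, hence $|t| \geq |\tau_1\tau_2|/\max(|\tau_1|,|\tau_2|)$ — no. The clean chain I will use: $\operatorname{Re}\phi_1(\xi\overline a b) = -\operatorname{Re}\phi_2(\xi\overline a b)$ (since $\Tr_\Phi$ is purely imaginary) and when $\tau_1\tau_2 < 0$ we have $|\phi_1(\xi\overline ab)||\phi_2(\xi\overline ab)| \leq \big(\tfrac{|\tau_1|+|\tau_2|}{2}\big)^2 \cdot(\dots)$? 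I will instead note $|\phi_1(\xi\overline a b)\,\overline{\phi_2(\xi\overline a b)}|$ has imaginary part $\tau_1\operatorname{Re}\phi_2 - \tau_2\operatorname{Re}\phi_1 = r(\tau_1+\tau_2) = r\,t/i$, giving $|t|\cdot r \leq |\phi_1||\phi_2| \cdot$... and combine with $|\phi_1||\phi_2|\geq|\Delta_K|^{-1/2}$ — still need $r$ bounded. The \textbf{main obstacle}, therefore, is exactly this case analysis: arranging the elementary inequalities so that the arithmetic input $|\operatorname{N}_{K/\Q}(\xi\overline a b)|\geq|\Delta_K|^{-1}$ is converted into the two stated bounds with the stated constants, without sign cancellation destroying the estimate. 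My concrete plan is: (i) establish $|\operatorname{N}_{K/\Q}(\xi\overline a b)| \geq |\Delta_K|^{-1}$ from $\xi\bar I I = \Dcal_{K/\Q}^{-1}$ and $a,b\in I$ (so $(\xi\overline a b)$ is in $\Dcal_{K/\Q}^{-1}$ times an integral ideal); (ii) rewrite $|\operatorname{N}_{K/\Q}(\xi\overline a b)| = |\phi_1(\xi\overline a b)|^2|\phi_2(\xi\overline a b)|^2$; (iii) in the same-sign case bound $|t| = |\tau_1|+|\tau_2| \geq 2\sqrt{|\tau_1\tau_2|}$ and, using that $\operatorname{Re}$-parts cancel, show $|\tau_1\tau_2|$ cannot be much smaller than $|\phi_1(\xi\overline ab)||\phi_2(\xi\overline ab)|$ — precisely, if $r$ denotes the common $|\operatorname{Re}\phi_j|$, then $|\phi_1||\phi_2| = \sqrt{(\tau_1^2+r^2)(\tau_2^2+r^2)}$ and one checks $\big(2\sqrt{|\tau_1\tau_2|}\big)^2 = 4|\tau_1\tau_2|$ versus $|\phi_1||\phi_2|\geq|\Delta_K|^{-1/2}$: so $|t|^4 \geq 16\tau_1^2\tau_2^2$ and $\tau_1^2\tau_2^2 \geq |\phi_1|^2|\phi_2|^2 - r^2(\tau_1^2+\tau_2^2) - r^4$, which is not obviously enough, confirming this sub-step needs the most care; (iv) in the opposite-sign case use that $|t| = \big||\tau_1|-|\tau_2|\big|$ can be small but then $|\phi_1||\phi_2|$ is controlled by $\max$, and feed in $|\phi_j(\xi\overline a b)| \leq \tfrac12(H_{\Phi,\xi}(\Phi(a),\Phi(a)) + H_{\Phi,\xi}(\Phi(b),\Phi(b)))$ (from the AM–GM $|\phi_j(\xi)\phi_j(\overline a)\phi_j(b)| \leq |\phi_j(\xi)|\tfrac12(|\phi_j(a)|^2+|\phi_j(b)|^2)$, then sum $j=1,2$ and use the previous lemma's identities to recognize the sum as $\tfrac12(H(\Phi a,\Phi a)+H(\Phi b,\Phi b))$) to get $|t| \geq |\phi_1||\phi_2|/\max(|\phi_1|,|\phi_2|) \geq |\Delta_K|^{-1/2}/\big(\tfrac12(H(\Phi a,\Phi a)+H(\Phi b,\Phi b))\big)$, which is exactly the claimed bound. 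I expect step (iii) to be where the real bookkeeping lives; the rest is routine once the norm inequality (i) is in hand.
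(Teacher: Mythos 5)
Your proposal has genuine gaps in both cases, and you partly acknowledge this yourself. The single idea you are missing, and which resolves your step (iii) entirely, is to replace $x=\xi\overline{a}b$ by $x' = x-\overline{x}$. This element is \emph{totally imaginary}, still lies in $\Dcal_{K/\Q}^{-1}$ (so $|N_{K/\Q}(x')|\geq|\Delta_K|^{-1}$ when $x'\neq 0$), and satisfies $\Tr_\Phi(x)=\Tr_\Phi(x')/2$ because $\Tr_\Phi(x)$ is purely imaginary. For $x'$ one has $|\phi_j(x')|=|\Im\phi_j(x')|$, so the real-part bookkeeping that derails you (your quantity $r$) simply disappears: in the same-sign case $|\Tr_\Phi(x')|=|\phi_1(x')|+|\phi_2(x')|\geq 2\sqrt{|\phi_1(x')||\phi_2(x')|}\geq 2|N_{K/\Q}(x')|^{1/4}\geq 2|\Delta_K|^{-1/4}$, and the first bound follows. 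Without passing to $x'$, the inequality $4\tau_1\tau_2\geq|\phi_1||\phi_2|$ you need is indeed false in general, as you suspected.

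Your step (iv) as written is also invalid: the chain ending in $|t|\geq|\phi_1||\phi_2|/\max(|\phi_1|,|\phi_2|)=\min(|\phi_1|,|\phi_2|)$ cannot hold, since in the opposite-sign case $|t|=\bigl||\tau_1|-|\tau_2|\bigr|$ can be arbitrarily small compared to $\min(|\phi_1|,|\phi_2|)$. The paper's fix is to apply the norm lower bound not to $x'$ but to the algebraic number $\Tr_\Phi(x')=\phi_1(x')+\phi_2(x')$ itself (it is $\phi_1$ of an element of the Galois-stable ideal $\Dcal_{K/\Q}^{-1}$, using that $K/\Q$ is cyclic): its norm factors as $\bigl(|\phi_1(x')+\phi_2(x')|\,|\phi_1(x')-\phi_2(x')|\bigr)^2$, and for totally imaginary $x'$ with opposite-sign imaginary parts one has $|\phi_1(x')-\phi_2(x')|=|\phi_1(x')|+|\phi_2(x')|$, whence $|\Tr_\Phi(x')|\geq|\Delta_K|^{-1/2}/(|\phi_1(x')|+|\phi_2(x')|)$. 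Your final Cauchy--Schwarz/AM--GM step bounding the denominator by $\tfrac12\bigl(H_{\Phi,\xi}(\Phi(a),\Phi(a))+H_{\Phi,\xi}(\Phi(b),\Phi(b))\bigr)$ does match the paper and is the one part of your plan for this case that survives.
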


\begin{proof}
Define $x = \xi \overline{a}b \in \xi \bar{I} I = (\Dcal_{K/\Q}^{-1})$ and $x' = x - \overline{x} \in  (\Dcal_{K/\Q}^{-1})$ as well which is a totally imaginary element of $K$. By hypothesis, $\Tr_\Phi(x)$ is purely imaginary so we can rewrite it as $\Tr_\Phi(x')/2$. In particular, our hypothesis implies that $x' \neq 0$. If $\Im \phi_1(x')$ and $\Im \phi_2(x')$ have the same sign, we can write 

\[
|\Tr_\Phi(x')| =  |\phi_1(x')| + |\phi_2(x')|   \geq  2 \sqrt{|\phi_1(x')| |\phi_2(x')|} \geq 2 |N_{K/\Q} (x')|^{1/4} \geq  2  |\Delta_K|^{-1/4},
\]
which proves the first case.

 If  $\Im \phi_1(x')$ and $\Im \phi_2(x')$ do not have the same sign, independently of the choice of type $\Phi$,  $\{\sigma \circ \phi_1, \sigma \circ \phi_2\}$ is $\{\phi_1,\overline{\phi_2}\}$ or $\{\phi_2,\overline{\phi_1}\}$. As a consequence, 
\[
|N_{K/\Q}(\Tr_\Phi(x'))| = (|\phi_1(x') + \phi_2(x')| |\phi_1(x') - \phi_2(x')|)^2 = | \phi_1(x')^2 - \phi_2(x')^2|^2.
\]
 As $x'$ is totally imaginary and nonzero by hypothesis, we cannot have $\phi_1(x') = \phi_2(x')$ so 
\[
|\Tr_\Phi(x')| = \frac{|N_{K/\Q}(\Tr_\Phi(x'))|^{1/2}}{(|\phi_1(x')| +|\phi_2(x')|)} \geq \frac{|\Delta_K|^{-1/2}}{(|\phi_1(x')| +|\phi_2(x')|)}. \]
Now, for $j \in \{1,2\}$ we can define the hermitian (positive semi-definite) 
form
\[
H_j (z,t) := (-i) \phi_j(\xi) \overline{z_j}t_j
\]
such that for all $a,b \in K$:
\[
H_j (\Phi(a),\Phi(b)) = (-i) \phi_j(\xi \overline{a} b).
\]
By the Cauchy-Schwarz inequality (still valid for positive hermitian forms), we have 
\[
|H_j(\Phi(a),\Phi(b))| \leq \sqrt{H_j(\Phi(a)) H_j(\Phi(b))},
\]
which gives
\begin{eqnarray*}
|\phi_j(x')| & \leq & 2 |\phi_j(x)| \leq 2 \sqrt{|\phi_j(\xi |a|^2 ) \phi_j(\xi |b|^2)} \\
& \leq &  (|\phi_j(\xi |a|^2)| + |\phi_j(\xi |b|^2 )|).
\end{eqnarray*}
So we obtain (as the conjugation does not change the absolute value, and the $\phi_j(\xi |a|^2)$ are always totally imaginary with positive imaginary part):
\begin{eqnarray*}
|\phi_1(x')|  + |\phi_2(x')|& \leq & |\Tr_\Phi(\xi |a|^2)| + |\Tr_\Phi(\xi |b|^2))| \\
 & \leq & \frac{1}{2}(H_{\Phi,\xi} (\Phi(a),\Phi(a)) + H_{\Phi,\xi} (\Phi(b),\Phi(b))). 
\end{eqnarray*}
By combining this with the previous inequality, the lemma is proved.
\end{proof}

Let $B = (e_1,e_2,e_3,e_4)$ be a symplectic basis. By Equation \eqref{eqpropE}, $\Tr_\Phi(\xi \overline{e_1} e_2)$ is purely imaginary so we can apply the Lemma to $a=e_1$ and $b=e_2$, {assuming this type trace is nonzero}. In the opposite sign case, we get by Equation \eqref{eqpropy1y2Tr} that
\[
|\Tr_\Phi (\xi \overline{e_1} e_2)| \geq \frac{2 |\Delta_K|^{-1/2}}{(y_2 + y_1)/\det Y},
\]
which gives by applying again Equation \eqref{eqpropy1y2Tr} 
\[
|y_{12}| \geq \frac{2 (\det Y)}{(y_1 + y_2) |\Delta_K|^{1/2}}.
\]
Recall that we are assuming $Z$ to be reduced. Then  Lemma \ref{lem:34} and the proof of Lemma \ref{lem:det} give 
\[
\det Y \geq \frac{3}{4} y_{1} \cdot y_{2}, \quad y_{2} \geq y_{1} \geq \sqrt{\frac{3}{4}}.
\]
Using these to provide crude bounds, we get 

\begin{equation}
\label{eqboundy12red}
|y_{12}| \geq \frac{3 \sqrt{3}}{4 |\Delta_K|^{1/2}} \geq |\Delta_K|^{-1/2}.
\end{equation}
In the case of same sign for both values of $\Im \phi_i(x)$, the bound predicted by the Lemma is better, 
 so \eqref{eqboundy12red} always holds. \\

We consider now the situation in which $y_{12} = 0$. In this case, as $\Tr_\Phi(\xi \overline{e_1} e_2)$ is purely imaginary with imaginary part $2 y_{12}/\det Y$:
\begin{itemize}
    \item $z_{12}=x_{12}$ is real, $Y$ is diagonal and $\det Y = y_1 y_2$.
    \item $\phi_2(\xi \overline{e_1} e_2) = - \phi_1 (\xi \overline{e_1} e_2)$ (definition of the trace) so $\xi \overline{e_1}e_2 \in \Q( \sqrt{\Delta_F}$) where $F$ is the totally real quartic field contained in $K$.
    \item $\Phi(e_1)$ and $\Phi(e_2)$ are orthogonal for $H_{\Phi,\xi}$ by \eqref{eqdefH}.
\end{itemize}

Recall that by \eqref{eqdefZ}, $\Phi(e_3) = z_1 \Phi(e_1) + z_{12} \Phi(e_2).$

Now, consider $x = \xi \overline{e_2} e_3$. By symplecticity hypothesis, $\Tr_\Phi(x)$ is again purely imaginary, and can be rewritten by orthogonality as 
\begin{eqnarray*}
\Tr_\Phi(\xi \overline{e_2} e_3) & = & \frac{i}{2} H_{\Phi,\xi}(\Phi(e_2),\Phi(e_3)) =  \frac{i z_{12}}{2} H_{\Phi,\xi}(\Phi(e_2),\Phi(e_2)) \\
& = & \frac{i z_{12} y_1}{\det Y} = \frac{i z_{12}}{y_2}.
\end{eqnarray*}

In particular, this type trace cannot be 0: as an abelian surface with CM by $K$ cyclic has necessarily a primitive CM-type and hence, it is not a product of elliptic curves.

We can therefore apply Lemma \ref{lem:Tr} to state that either 
\[
|\Tr_\Phi(\xi \overline{e_2} e_3)| \geq {|\Delta_K|^{-1/4}}
\]
or (opposite sign case)
\[
|\Tr_\Phi(\xi \overline{e_2} e_3)| \geq \frac{2 |\Delta_K|^{-1/2}}{H_{\Phi,\xi}(\Phi(e_2),\Phi(e_2)) + H_{\Phi,\xi}(\Phi(e_3),\Phi(e_3))}
\]
Using the orthogonality of $\Phi(e_1)$ and $\Phi(e_2)$ again, we have
\begin{eqnarray*}
H_{\Phi,\xi}(\Phi(e_2),\Phi(e_2)) + H_{\Phi,\xi}(\Phi(e_3),\Phi(e_3)) & = & \frac{y_1 + |z_1|^2 y_2 + z_{12}^2 y_1}{ \det Y} \\
& = & \frac{1}{y_2} + \frac{y_1^2 + x_1^2}{y_1} + \frac{z_{12}^2}{ y_2}.
\end{eqnarray*}
Assuming now that $Z$ is in the fundamental domain, 
$\sqrt{3}/2 \leq y_1 \leq y_2$, $\det Y \geq 1$, $|x_1| \leq 1/2$, $|z_{12}| \leq 1/2$. Using these inequalities, we obtain

\[
H_{\Phi,\xi}(\Phi(e_2),\Phi(e_2)) + H_{\Phi,\xi}(\Phi(e_3),\Phi(e_3))  \leq \frac{4}{3}y_2 + y_2 + \frac{1}{{3}}y_2 + \frac{1}{{3}}y_2  \leq 3 y_2
\]
hence
\[
|z_{12}| \geq {\frac{2}{3}} |\Delta_K|^{-1/2},
\]
which concludes the proof of Proposition \ref{propminz12}.

\begin{rem}
    The bound of Proposition \ref{propminz12} holds for all reduced $Z$ corresponding to a CM triple $(I,\xi,\Phi)$, but one might hope for a better (explicit) exponent than $-1/2$ on average over the whole Galois orbit of a CM abelian variety associated to the triple by using variants of this method.
\end{rem}

\subsection{Bounding the trace of the conjugated period matrices} We recall that  $(I,\xi,\Phi)$ is a fixed CM triple with  $Z$ as corresponding associated reduced matrix.
For an ideal class $\Ccal \in \Cl_K$, we denote by $N(\Ccal)$ the minimal norm of the integral ideals in $\Ccal$.

\begin{lem}(\cite[Lemma 3.6 (i)]{HabeggerPazuki17})
\label{lem36} 
Let $Z$ be a reduced period matrix. There exists a constant $c > 0$ which depends only on $g$ with the following property:
\begin{align*}
\Tr (\Im ( Z))	\leq c\left( \frac{|\Delta_K|^\frac12}{N([I]^{-1})} \right)^\frac1g.
\end{align*}
\end{lem}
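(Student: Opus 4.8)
The plan is to bound $\Tr(\Im Z) = y_1 + y_2$ from above, and since $Z$ is reduced we have $y_1 \leq y_2$, so it suffices to bound $y_2$ by a constant times $\bigl(|\Delta_K|^{1/2}/N([I]^{-1})\bigr)^{1/g}$ (here $g = 2$). The key identity is the first line of the lemma recalling $\tfrac{y_2}{\det Y} = H_{\Phi,\xi}(\Phi(e_1),\Phi(e_1)) = \tfrac{2}{i}\Tr_\Phi(\xi\overline{e_1}e_1)$, which expresses $y_2$ in terms of the Hermitian form evaluated on a lattice vector. Combined with Lemma~\ref{lem:det} (or more precisely $\det Y \geq 1$ on $\Fcal_2$, together with Lemma~\ref{lem:34} giving $\det Y \geq \tfrac34 y_1 y_2 \geq \tfrac34 y_2$ since $y_1 \geq \sqrt{3}/2 > 1$... actually one needs $\det Y$ controlled from \emph{above} here), the real content is: find a vector in $\Phi(I)$ on which the positive-definite Hermitian form $H_{\Phi,\xi}$ is small, which forces $y_2$ small; the size of the minimal value is governed by the covolume of the lattice $\Phi(I)$ for the metric $H_{\Phi,\xi}$, which is essentially $\det Y$ times a normalization, and by Minkowski's convex body / successive minima theorem.

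More concretely, I would argue as follows. The lattice $\Phi(I) \subset \C^2$ carries the positive-definite Hermitian form $H = H_{\Phi,\xi}$; its covolume (with respect to the associated Euclidean structure $\Re H$) can be computed from the symplectic basis $(e_1,\dots,e_4)$ and equals a fixed constant (the pfaffian is $1$), independent of the triple. By Minkowski's second theorem there is a nonzero $v \in \Phi(I)$ with $H(v,v) \leq c_g \cdot (\mathrm{covol})^{2/(2g)}$; but here all we need is a single short vector, so Minkowski's first theorem suffices. The subtlety is that we want not just "short with respect to $H$" but a bound involving $N([I]^{-1})$ and $|\Delta_K|$: the point is that choosing $v = \Phi(\alpha)$ for a suitable $\alpha \in K$, one has $H(\Phi(\alpha),\Phi(\alpha)) = \tfrac{2}{i}\Tr_\Phi(\xi\overline\alpha\alpha) = -2\sum_j \phi_j(\xi)|\phi_j(\alpha)|^2$, and by AM-GM this is $\geq 2\sqrt{\prod_j |\phi_j(\xi)| |\phi_j(\alpha)|^2} = 2|N_{K/\Q}(\xi)|^{1/2}|N_{K/\Q}(\alpha)|^{1/2}$. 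Now $|N_{K/\Q}(\xi)| = N(\Dcal_{K/\Q})^{-1} N(\bar I I)^{-1} = |\Delta_K|^{-1} N(I)^{-2}$ from the relation $\xi\bar I I = \Dcal_{K/\Q}^{-1}$. So the minimal value of $H$ over $\Phi(I)\setminus\{0\}$, being achievable and at most the Minkowski-type constant times covolume$^{1/g}$, must also be $\geq 2|\Delta_K|^{-1/2}N(I)^{-1}|N_{K/\Q}(\alpha)|^{1/2}$; choosing $\alpha$ realizing the minimal norm in the ideal $I$ (i.e. $(\alpha) \subseteq I$ with $N(\alpha)/N(I) = N([I]^{-1})$, up to the usual Minkowski normalization), one gets a relation between $\det Y$ and $|\Delta_K|^{1/2} N([I]^{-1})$, and hence the desired upper bound on $y_2$.

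The cleanest route, which is likely what the authors do, is: (1) use $y_2 = H(\Phi(e_1),\Phi(e_1)) \cdot \det Y$; (2) bound $\det Y$ from above by the minimum of $H$ over the full lattice times a constant — this is Minkowski applied to the lattice $\Phi(I)$ with metric $H$, whose covolume is a universal constant times $\det Y$; wait, one needs it the other direction. So instead: the reduction condition (S2)/(S3) forces $H(\Phi(e_1),\Phi(e_1))$ to be comparable to the \emph{smallest} value of $H$ on the lattice — this is essentially the statement that a reduced period matrix is Minkowski-reduced, so that $e_1$ (or $e_2$) is a shortest vector. Then (3) that smallest value is $\geq 2|\Delta_K|^{-1/2}N(I)^{-1}\min_{0\neq\alpha\in I}|N_{K/\Q}(\alpha)|^{1/2} \geq 2|\Delta_K|^{-1/2}N(I)^{-1}(N(I) N([I]^{-1}))^{1/2}$ after relating the minimal norm of elements of $I$ to the minimal norm $N([I]^{-1})$ of integral ideals in the inverse class. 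Putting these together: $\det Y \cdot (\text{smallest }H) \leq c$, so $\det Y \leq c' |\Delta_K|^{1/2} N([I]^{-1}) / (\text{stuff})$, and then $y_2^2 \leq y_1 y_2 \leq \tfrac43 \det Y$ by Lemma~\ref{lem:34} (since $y_1 \geq \sqrt{3}/2$, actually $y_2 \leq \tfrac43 \det Y / y_1 \leq \tfrac{8}{3\sqrt 3}\det Y$), finishing with $g = 2$.

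The main obstacle, as I see it, is pinning down the precise relationship between the minimal value of $H_{\Phi,\xi}$ on $\Phi(I)$ and the quantity $|\Delta_K|^{1/2}/N([I]^{-1})$ — i.e., translating between the Hermitian-form/Minkowski-lattice picture on $\C^2$ and the ideal-theoretic norm data. The identity $\xi\bar I I = \Dcal_{K/\Q}^{-1}$ is the bridge ($|N_{K/\Q}(\xi)| N(I)^2 = |\Delta_K|^{-1}$), and the arithmetic-geometric mean inequality converts the trace form into a norm; the final ingredient is that among $\{|N_{K/\Q}(\alpha)| : 0 \neq \alpha \in I\}$ the minimum divided by $N(I)$ is exactly $N([I]^{-1})$ (the norm of the integral ideal $(\alpha)I^{-1}$ in the class $[I]^{-1}$, minimized). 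Since we only need an inequality in one direction and are allowed a constant $c$ depending on $g$, the argument should go through cleanly once these identities are lined up, with Minkowski's theorem (or the equivalent reduction-theory statement for $\mathcal{F}_2$) supplying the one nontrivial geometric input.
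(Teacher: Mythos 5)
Your proposal follows essentially the same route as the paper's proof (Lemma \ref{lem:lemma36HPexplicit}): view $Y^{-1}$ as the Gram matrix of $H_{\Phi,\xi}$ in the basis $(\Phi(e_1),\Phi(e_2))$, lower-bound $H_{\Phi,\xi}(\Phi(x),\Phi(x))$ for nonzero $x\in I$ by AM--GM together with $\xi\bar{I}I=\Dcal_{K/\Q}^{-1}$ and the factorization $(x)=IJ$ with $J$ integral in $[I]^{-1}$, then convert these lower bounds on the diagonal entries of $Y^{-1}$ into upper bounds on $y_1,y_2$ via the reduction inequality $\det Y\geq\tfrac34 y_1y_2$ --- in particular no appeal to $e_1$ being a shortest vector or to Minkowski's convex-body theorem is needed, since the lower bound is simply applied to $x=e_1$ and $x=e_2$. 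The one slip to correct is the evaluation $\sqrt{\prod_j|\phi_j(\xi)|\,|\phi_j(\alpha)|^2}=|N_{K/\Q}(\xi)|^{1/4}|N_{K/\Q}(\alpha)|^{1/2}$ (exponent $1/4$, not $1/2$, on $|N_{K/\Q}(\xi)|$), which is precisely what makes the factors of $N(I)$ cancel and yields the exponent $1/g=1/2$ in the final bound rather than the leftover $N(I)^{1/2}$ your displayed inequality would produce.
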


Our explicit version is

\begin{lem}
\label{lem:lemma36HPexplicit}

With the same notations as in Lemma \ref{lem36} above, for $g=2$  one can take the constant $c=2/3$.

\end{lem}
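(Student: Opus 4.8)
The plan is to follow the same strategy that was used to prove Proposition~\ref{propminz12}, only keeping track of explicit constants throughout. Recall that for a reduced period matrix $Z$ associated to the CM triple $(I,\xi,\Phi)$ and a symplectic basis $B = (e_1,e_2,e_3,e_4)$ of $I$, the lattice $\Phi(I) \subseteq \C^2$ carries the Riemann form $E_{\Phi,\xi}$, and $\Tr(\Im Z) = y_1 + y_2$. By the first Lemma of Subsection~\ref{z12Disc}, we have $y_1/\det Y = H_{\Phi,\xi}(\Phi(e_2),\Phi(e_2))$ and $y_2/\det Y = H_{\Phi,\xi}(\Phi(e_1),\Phi(e_1))$, so $(y_1+y_2)/\det Y = H_{\Phi,\xi}(\Phi(e_1),\Phi(e_1)) + H_{\Phi,\xi}(\Phi(e_2),\Phi(e_2))$; that is, $y_1+y_2 = (\det Y)\bigl(H_{\Phi,\xi}(\Phi(e_1)) + H_{\Phi,\xi}(\Phi(e_2))\bigr)$. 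The point is to bound this sum of hermitian norms of two lattice vectors from above, using a Minkowski-type argument: in the lattice $\Phi(I)$ with the hermitian metric $H_{\Phi,\xi}$ (whose covolume is controlled by the pfaffian being $1$), one can find short vectors, but since $e_1, e_2$ are part of a fixed symplectic basis one has to be slightly more careful — the cleanest route is to exploit that for a \emph{reduced} matrix $Z$ in $\Fcal_2$, the vectors $\Phi(e_1),\Phi(e_2)$ are already minimized up to the $\GL_2$-reduction conditions (S2), and $\det Y \geq 9/8$ by Lemma~\ref{lem:det} while $y_1 y_2 \leq \tfrac43 \det Y$ by Lemma~\ref{lem:34}.

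More concretely, I would argue as follows. First relate $\det Y$ to the discriminant and the norm of $[I]$: since $E_{\Phi,\xi}$ has pfaffian $1$ on $\Phi(I)$, the covolume of $\Phi(I)$ for the standard metric equals $2^{-2}|\det(\text{period matrix})|$, and a computation with the basis $(\Phi(e_1),\Phi(e_2),\Phi(e_3),\Phi(e_4))$ (using $\Phi(e_3) = z_1\Phi(e_1)+z_{12}\Phi(e_2)$, $\Phi(e_4) = z_{12}\Phi(e_1)+z_2\Phi(e_2)$) shows $\det Y = \vol(\C^2/\Phi(I))^2 / |\phi_1(\xi)\phi_2(\xi)|^{?}$ — the relevant identity, already implicit in the excerpt, is that the hermitian form $H_{\Phi,\xi}$ has matrix $Y^{-1}$ in the basis $(\Phi(e_1),\Phi(e_2))$, so $\det Y^{-1} = \det$ of the Gram matrix of $(\Phi(e_1),\Phi(e_2))$ for $H_{\Phi,\xi}$. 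Combining $\det(H_{\Phi,\xi}) = 4 N_{K/\Q}(\xi) = 4/(N([I])^2 |\Delta_K|)$ (from $\xi \bar I I = \Dcal_{K/\Q}^{-1}$, taking norms) with the fact that the Gram determinant of $(\Phi(e_1),\Phi(e_2))$ equals that of the whole lattice $\Phi(I)$ for $H_{\Phi,\xi}$ (because $(e_1,\dots,e_4)$ is a $\Z$-basis and $H_{\Phi,\xi}$ restricted to the relevant rank-$2$ picture is what enters), one gets $\det Y = \tfrac14 N([I]^{-1})^2 |\Delta_K|$ up to checking the normalization. Hence $\det Y$ is \emph{exactly} of order $N([I]^{-1})^2 |\Delta_K|$, which is worse than what we want — so instead one should bound $y_1+y_2$ directly: by AM-GM, $y_1 + y_2 \leq 2\max(y_1,y_2) = 2y_2$, and then use $y_1 \geq \sqrt{3}/2$ together with $y_1 y_2 \leq \tfrac43\det Y$ to get $y_2 \leq \tfrac43 \det Y / y_1 \leq \tfrac43 \cdot \tfrac{2}{\sqrt3}\det Y = \tfrac{8}{3\sqrt3}\det Y$, which still grows like $\det Y$. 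The resolution must be that the right quantity is $(\det Y)^{1/2} \asymp N([I]^{-1})\,|\Delta_K|^{1/2}$, and the claimed bound is $\Tr(\Im Z) \leq \tfrac23 \bigl(|\Delta_K|^{1/2}/N([I]^{-1})\bigr)^{1/2}$, so in fact $y_1+y_2$ must be comparable to $(\det Y)^{1/2}/N([I]^{-1})$ — meaning one needs the \emph{reduction} hypothesis to force $\det Y$ to be small, i.e. $\det Y \leq$ const, which contradicts the identity above unless $N([I]^{-1})|\Delta_K|^{1/2}$ is itself bounded.

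Given that tension, the correct plan is: do \emph{not} try to compute $\det Y$ in closed form; instead bound $y_1 + y_2$ using a short-vector argument in the lattice $\Phi(I)$ equipped with $H_{\Phi,\xi}$. By Minkowski's theorem (or the standard successive-minima bound for a rank-$4$ real lattice, i.e. rank-$2$ hermitian), there is a nonzero $\alpha \in I$ with $H_{\Phi,\xi}(\Phi(\alpha),\Phi(\alpha)) \leq \gamma_2 \cdot \det(H_{\Phi,\xi}|_{\Phi(I)})^{1/2}$ for the Hermite constant; translating $\det(H_{\Phi,\xi}|_{\Phi(I)})$ into $4/(N([I])^2|\Delta_K|)$... no — one wants a large lower bound on norms, not a short vector. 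Let me instead state the plan I actually believe works: the key inequality is that for the reduced $Z$, $\Tr(\Im Z) = y_1 + y_2$ and (by Lemma~\ref{lem:34} and Lemma~\ref{lem:det}) $y_1 + y_2 \leq 2 y_2 \leq \tfrac{8}{3\sqrt3}\det Y$ is too weak, so one uses instead that $y_1 y_2 \geq \det Y$ gives nothing; rather, one applies the \emph{upper} bound on the minimal hermitian norm: there exists a symplectic basis making $\Phi(e_1)$ a shortest vector, so $H_{\Phi,\xi}(\Phi(e_1)) \leq c' |\det H_{\Phi,\xi}|^{1/2} = c' \cdot 2/(N([I])|\Delta_K|^{1/2})$; since $H_{\Phi,\xi}(\Phi(e_1)) = y_2/\det Y$ and similarly for $e_2$, and since reduction gives $y_1 \leq y_2$ hence $H_{\Phi,\xi}(\Phi(e_2)) \geq H_{\Phi,\xi}(\Phi(e_1))$, one gets $y_1 + y_2 = (\det Y)(H(\Phi(e_1)) + H(\Phi(e_2)))$ and one bounds the hermitian quantities by $c' |\det H|^{1/2}$, while bounding $\det Y$ by Lemma~\ref{lem:34}/\ref{lem:det}-type estimates to recover a net power of $(|\Delta_K|^{1/2}/N([I]^{-1}))^{1/2}$. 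The main obstacle, and the step requiring the most care, is pinning down the exact constant $2/3$: this comes from optimizing the Minkowski/Hermite constant for rank-$2$ hermitian lattices ($\gamma = \sqrt{4/3}$, the hexagonal lattice) against the elementary bounds $\det Y \geq 9/8$, $y_1 y_2 \leq \tfrac43 \det Y$, $y_1 \geq \sqrt3/2$, and checking that the $\tfrac43$'s and $\tfrac32$'s combine to exactly $\tfrac23$ — I expect the $2/3$ is not sharp but is what these crude inputs yield, exactly as in Proposition~\ref{propminz12} where the same constant $\tfrac23$ appeared "by a very short margin".
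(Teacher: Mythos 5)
Your proposal never closes the loop: after correctly identifying that $Y^{-1}$ is the Gram matrix of $H_{\Phi,\xi}$ in the basis $(\Phi(e_1),\Phi(e_2))$, so that $y_2/\det Y = H_{\Phi,\xi}(\Phi(e_1),\Phi(e_1))$ and $y_1/\det Y = H_{\Phi,\xi}(\Phi(e_2),\Phi(e_2))$, you try three routes (computing $\det Y$ in closed form, the trivial bound $y_1+y_2\leq 2y_2 \ll \det Y$, and finally a Minkowski/Hermite \emph{upper} bound on a shortest vector), and each one stalls for the reason you yourself notice: none of them produces an upper bound on $\det Y$, or on the $y_i$, in terms of $|\Delta_K|^{1/2}/N([I]^{-1})$. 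The Hermite-constant step in your last attempt only gives $H_{\Phi,\xi}(\Phi(e_1),\Phi(e_1)) \leq \gamma_2 (\det Y)^{-1/2}$, hence $y_2 \ll (\det Y)^{1/2}$, and Lemmas \ref{lem:34} and \ref{lem:det} only bound $\det Y$ from \emph{below}; the discriminant and the ideal class never enter your argument, so it cannot yield the stated estimate.

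The missing idea is an arithmetic \emph{lower} bound on the hermitian norm of every nonzero lattice vector. For nonzero $x\in I$ one has $H_{\Phi,\xi}(\Phi(x),\Phi(x)) = 2\sum_j |\phi_j(\xi)|\,|\phi_j(x)|^2 \geq 4|N_{K/\Q}(\xi)|^{1/4}|N_{K/\Q}(x)|^{1/2}$ by AM--GM over the two archimedean places; since $\xi \bar I I = \Dcal_{K/\Q}^{-1}$ gives $|N_{K/\Q}(\xi)| = (N(I)^2|\Delta_K|)^{-1}$ and $(x) = IJ$ with $J$ integral in the class $[I]^{-1}$, this is $\geq 4N([I]^{-1})^{1/2}|\Delta_K|^{-1/4}$. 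Applying this to $x=e_1,e_2$ lower-bounds the diagonal entries $y_2/\det Y$ and $y_1/\det Y$ of $Y^{-1}$; the reduction hypothesis, via $\det Y\geq \tfrac34 y_1y_2$ (Lemma \ref{lem:34}), gives the upper bounds $y_i/\det Y \leq \tfrac{4}{3 y_j}$ for $\{i,j\}=\{1,2\}$, and comparing the two yields $y_j\leq \tfrac13 |\Delta_K|^{1/4} N([I]^{-1})^{-1/2}$ for each $j$, which sums to the claimed constant. In particular $\tfrac23$ arises as $2\cdot\tfrac{4/3}{4}$, from the Minkowski-reduction constant $\tfrac43$ against the AM--GM constant $4$, not from the Hermite constant $\sqrt{4/3}$ as you guessed, and Lemma \ref{lem:det} is not needed here.
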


\begin{proof}
We closely follow the proof of \cite[Lem. 3.6(i)]{HabeggerPazuki17}.
Define $Y := \Im(Z) = \begin{pmatrix} y_1 & y_{12} \\ y_{12} & y_2 \end{pmatrix}$. By construction of $Z$ (relative to a choice of symplectic $\Z$-basis $(e_1, e_2,e_3,e_4)$ of $I$), $Y^{-1}$ is the matrix of the hermitian form $H_{\Phi}$ in $\Phi(e_1),\Phi(e_2)$, define $y'_1$ and $y'_2$ as its diagonal elements.

For any $x \in I$ non zero, we have 
\begin{eqnarray*}
H_\Phi(\Phi(x),\Phi(x)) & = & 2 \sum_{j=1}^2 |\phi_j(\xi)| |\phi_j(x)|^2  \\
& \geq & 4 \sqrt{|\phi_1(\xi) \phi_2(\xi)|} |\phi_1(x) \phi_2(x)| \\
& \geq & 4 |N_{K/\Q}(\xi)|^{1/4} |N_{K/\Q}(x)|^{1/2} \\
& \geq & \frac{4|N_{K/\Q}(x)|^{1/2}}{N(I)^{1/2}|\Delta_K|^{1/4}}.
\end{eqnarray*}
Now, as $x \in I$, we can write $(x)  = I J$ for some integral ideal $J \in [I]^{-1}$, and we get by passing to the norm
\[
H_\Phi(\Phi(x),\Phi(x)) \geq  \frac{4 N(J)^{1/2}}{|\Delta_K|^{1/4}} \geq  \frac{4 N([I]^{-1})^{1/2}}{|\Delta_K|^{1/4}},
\]
which holds for all nonzero $x \in I$ so provides a lower bound for $y'_1$ and $y'_2$. Now, as $Z$ is reduced, $Y$ is Minkowski-reduced. Hence, $\det Y \geq \frac{3}{4} y_1 y_2$ by Proposition \ref{mink}. We can thus write 
$y'_1 \leq \frac{4}{3 y_1}$ and $y'_2 \leq \frac{4}{3 y_2}$. The lower bounds on $y'_1, y'_2$ obtained above thus transform into an upper bound on $y_1,y_2$ and the proposition follows.
\end{proof}

\section{Subconvexity bounds and the average of the inverses of the minimal ideals norms}\label{sec:subconvex}
In the previous section we bounded the trace of the imaginary part of a reduced period matrix by $\frac{2}{3}\left( \frac{|\Delta_K|^\frac12}{N([I]^{-1})} \right)^\frac12$ in Lemma \ref{lem:lemma36HPexplicit}. While we do not have individual  control on those terms, we can still bound them on average when we let the ideals $I$ vary on a coset in the class group $\Cl_K$. This gives an explicit version of \cite[Proposition 5.9 $(i)$]{HabeggerPazuki17}, which we state now.

\begin{prop}
\label{propsubconv}
Let $F$ be a totally real number field. There is an absolute constant $c$ and a constant $c'(F)$ such that for every imaginary quadratic extension $K/F$ (which is Galois quartic over $\Q$) of discriminant $|\Delta_K|^{1/4} \geq 98$ and every coset $H$ of a subgroup of $\Cl_K$, assuming GRH for the Hecke L-functions of all characters $\chi_K : \Cl_K \ra \C^* $, one has

\[
\frac{1}{\# H} \sum_{[I] \in H} \left( \frac{|\Delta_K|^{1/2}}{N([I])} \right)^{1/2}  \leq c \frac{|\Delta_K|^{1/2 - 1/32}}{\# H} + c'(F).
\]
Furthermore, we can take $c = 80$ and $c'(F)=\frac{5800 R_F \log |\Delta_K|}{|\Delta_K|^{1/32}} + 20 R_F\leq  1.4 \cdot 10^5 R_F$.

\end{prop}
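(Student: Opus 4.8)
The plan is to reduce the averaged sum to a sum over the class group $\Cl_K$ of the quantities $N([I])^{-1/2}$, and to evaluate this by relating it to the Dirichlet series $\sum_{\gc} N(\gc)^{-s}$ over ideal classes, i.e. to (twisted) partial Dedekind zeta functions, at the point $s=1/2$, where subconvexity provides the savings we need. First I would replace the coset sum by the full class-group sum: since every term is positive and $H$ is a coset of a subgroup, bounding $\sum_{[I]\in H}(\cdots)$ by $\sum_{[I]\in\Cl_K}(\cdots)$ loses only the factor $\#H$ in the denominator, which is exactly what appears on the right-hand side; so it suffices to bound $\sum_{[I]\in\Cl_K} N([I])^{-1/2}$ by $c\,|\Delta_K|^{-1/32}+ (\text{class-number-type term})$. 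Writing each $N([I])^{-1/2}$ as a Mellin/Perron integral against the partial zeta function $\zeta_K([I],s)=\sum_{\gc\in[I]}N(\gc)^{-s}$, and summing over the class group, one gets $\zeta_K(s)=\sum_{[I]}\zeta_K([I],s)$, so the whole thing is governed by $\zeta_K(s)$ on the line $\Re s=1/2+\varepsilon$.

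Next I would expand $\zeta_K$ over the characters of $\Cl_K$: $\zeta_K(s)=\sum_{\chi}L(s,\chi_K)$ where the sum is over the (finitely many) ideal class characters, and $L(s,\chi_K)$ is the associated Hecke $L$-function (the trivial character giving $\zeta_K$ itself, with its pole at $s=1$). On $\Re s = 1/2$, Chandee's explicit GRH-conditional subconvexity bound (invoked via Lemma \ref{lemboundssubconv}) gives each $|L(1/2+it,\chi_K)| \ll (\text{analytic conductor})^{1/32+\varepsilon}$ with completely explicit constants; the analytic conductor of $L(s,\chi_K)$ is of size a bounded power of $\Delta_K$ (times a polynomial in $1+|t|$), producing the exponent $1/2-1/32$ after accounting for the $|\Delta_K|^{1/2}$ already pulled out front. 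Shifting the Perron contour from $\Re s = 1+\delta$ to $\Re s = 1/2$ picks up the residue of $\zeta_K$ at $s=1$, which by the analytic class number formula is $\frac{2^{r_1}(2\pi)^{r_2} h_K R_K}{w_K\sqrt{|\Delta_K|}}$; bounding this residue and converting $h_K R_K$ into something controlled by $R_F$ (using that $K/F$ is a CM extension, so $h_K R_K$ is comparable to $h_F R_F$ up to explicitly bounded factors and the relative class number, and absorbing the $\sqrt{|\Delta_K|}$) is what yields the $c'(F)$ term of the stated shape $\frac{5800 R_F\log|\Delta_K|}{|\Delta_K|^{1/32}}+20R_F$. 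The vertical integral $\int_{-\infty}^{\infty} |L(1/2+it,\chi_K)| \,|\Gamma\text{-factor ratio}|\,dt$ converges thanks to the Gamma factors in the completed $L$-function and the polynomial-in-$t$ subconvex bound, and contributes the main term $c\,|\Delta_K|^{1/2-1/32}/\#H$; summing over the $h_K = O(|\Delta_K|^{1/2+\varepsilon})$ characters is harmless since the $|\Delta_K|^{1/2}$ loss from the number of characters is already subsumed (one must be slightly careful here and either use the average over characters or note the exponent $1/32$ leaves enough room — this is where the hypothesis $|\Delta_K|^{1/4}\geq 98$ and the precise numerics enter).

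The main obstacle is bookkeeping the explicit constants: tracking Chandee's constant through the conductor of the Hecke $L$-function of a class character over the quartic field $K$, controlling the Gamma-factor ratios on the $1/2$-line uniformly, and bounding the residue-at-$s=1$ term in terms of $R_F$ alone (rather than $R_K$, $h_K$) — this last step requires the relation between the regulator and class number of $K$ and those of its real quadratic subfield $F$, together with an upper bound on the relative class number $h_K/h_F$ in terms of $\Delta_K$, and then checking that after dividing by $\sqrt{|\Delta_K|}$ everything collapses into the claimed $1.4\cdot 10^5 R_F$ bound for $|\Delta_K|^{1/4}\geq 98$. The analytic structure of the argument is standard (Perron + contour shift + subconvexity); it is the uniform, fully explicit constant-chasing — and verifying the one-line inequality $\frac{5800 R_F\log|\Delta_K|}{|\Delta_K|^{1/32}}+20R_F \leq 1.4\cdot 10^5 R_F$ under the discriminant hypothesis — that constitutes the real work.
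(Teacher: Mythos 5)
Your overall strategy (a smoothed Perron/Mellin integral against Hecke $L$-functions of class group characters, GRH subconvexity via Chandee, the residue of $\zeta_K$ at $s=1$ converted into $R_F$ via Louboutin and Washington) is indeed the one the paper follows, but two steps of your reduction have genuine gaps. First, replacing the coset sum by the full class-group sum and dividing by $\#H$ does \emph{not} suffice: the pole of $\zeta_K$ at $s=1$ contributes to the shifted integral a term of size roughly $\kappa_K x \asymp \varepsilon\, h_K R_K$, and in your reduction this gets divided only by $\#H$, leaving $\tfrac{h_K}{\#H}R_K$, which is unbounded for small cosets (the proposition is asserted for \emph{every} coset, and the application does use proper cosets). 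Since $h_K R_K$ is of size $|\Delta_K|^{1/2}$ up to logarithms, this term cannot be absorbed into $c\,|\Delta_K|^{1/2-1/32}/\#H$ either. The paper avoids this by the discrete Fourier expansion $S_H(x)\le\frac{1}{[\Cl_K:H_0]}\sum_{\chi|_{H_0}\equiv 1}\chi(h)^{-1}S(x,\chi)$ of Lemma \ref{lemlinktoSxX}: only the trivial character carries the pole, and its contribution is divided by $h_K$ (not $\#H$), which is what yields the coset-independent $20R_F$.

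Second, and more fundamentally, you never pass from counting \emph{all} ideals of bounded norm to the sum of $N([I])^{-1/2}$ over the \emph{minimal} norms of all classes in $H$. The partial zeta function $\sum_{\gc\in[I]}N(\gc)^{-s}$ does not converge at $s=1/2$, and its analytic continuation there does not dominate $N([I])^{-1/2}$; what the contour shift actually controls is $\sum_{[I]\in H,\ N([I])\le x}(x/N([I]))^{1/2}$ for a truncation $x=\varepsilon|\Delta_K|^{1/2}$. The classes with $N([I])>x$ must then be handled by the elementary dichotomy $|\Delta_K|^{1/2}/N([I])\le\varepsilon^{-1}$, and the constants $c=80$, $20R_F$, and even the survival of the exponent $1/32$ come precisely from optimizing $\varepsilon$ in $C(H)\varepsilon^{1/12}+\varepsilon^{-1/2}$; none of this appears in your sketch. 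Relatedly, your accounting of the saving is off: Chandee's explicit GRH bound at the central point gives exponent $3/16$ in $|\Delta_K|$, not $1/32+\varepsilon$, and one cannot shift to $\Re s=1/2$ itself (the Mellin transform $\Gamma(s-\tfrac12)$ of the smoothing has a pole there, and a sharp Perron integral would not converge absolutely against the $(1+|t|)^{3/4}$ growth); the paper instead stops at $\sigma_1=\tfrac12+\tfrac1{12}$ after Phragmen--Lindel\"of interpolation, and the $1/32$ emerges only from combining that with the $\varepsilon$-optimization just described.
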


\begin{rem}
This result (even without stating the explicit constants) is slightly stronger than \cite[Proposition 5.9 $(i)$]{HabeggerPazuki17}: indeed, our constant $c$ is actually absolute and the dependency in $F$ in the bound is only in the additive error term $c'(F)$. This is most likely due to the assumption of GRH (whose original goal was making the exponent gain explicit), and the authors do not know if one can attain such an explicit result without assuming GRH. Indeed, looking at unconditional subconvexity results in the literature, the existence of an absolute exponent gain on $|\Delta_K|$ (here $1/32$) is known but not made explicit and the fact that the constant before this power of $|\Delta_K|$ can be made absolute is not clear to us.
\end{rem}

The proof of this result has the same structure as in \cite{HabeggerPazuki17}:

\begin{itemize}
    \item[$(a)$] State a general subconvexity result on the critical line for Hecke L-functions $L(\chi,s)$ where $\chi : \Cl_K \ra \C^*$ is a character, which can then be extended for $\Re(s) \in [1/2,2]$ by the Phragmen-Lindelöf principle.
    \item[$(b)$] For any $x \geq 0$ large enough, bound 
    \begin{equation}
    \label{eqsumnormlessthanx}
    \frac{1}{\# H} \sum_{\substack{[I] \in H \\ N([I]) \leq x}} \left( \frac{|\Delta_K|^{1/2}}{N([I])} \right)^{1/2}
    \end{equation}
    thanks to weighted sums of integrals of $L(\chi,s)$ along vertical lines of the domain of convergence of the series.
    \item[$(c)$] Use residue theorems to shift these vertical lines near the critical lines, and obtain bounds on \eqref{eqsumnormlessthanx}.
    \item[$(d)$] Finally, use Minkowski bounds on the smallest integral ideal in a given class to find a proper $x$, and obtain the Proposition.
\end{itemize}

We will now proceed along those lines (notice at steps $(a)$ and $(c)$, the case $\chi = 1$ has to be treated slightly differently because $\zeta_K$ has a pole at $s=1$).

 For any number field $K$ of degree $d$ and any character $\chi: \Cl_K \ra \C^*$, the associated Hecke $L$-function is defined for $\Re(s) >1$ by 
    \[
    L(\chi,s) := \sum_I \frac{\chi(I)}{N(I)^s}
    \]
    
    where $I$ goes through nonzero integral ideals of $\Ocal_K$. This is a Hecke $L$-series associated to a Grossencharakter of modulus 1 (see \cite[paragraph VII.8]{Neukirch} for details on what follows).

Our first use of GRH is here. It produces completely explicit (and sufficiently strong) bounds on our $L$-functions on the critical line.

\begin{lem}
\label{lemboundssubconv}
For any CM quartic field $K$, any nontrivial $\chi : \Cl_K \ra \C^*$ and any $t \in \R$, assuming GRH for this $L$-function: 
\begin{equation}
\label{eqboundsubconvnontrivial}
|L(\chi,1/2+it)| \leq 263 (1+|t|)^{3/4} |\Delta_K|^{3/16}.
\end{equation}
For $\chi$ trivial, we have for any $t \in \R$, assuming GRH for $\zeta_K$:
\begin{equation}
\label{eqboundsubconvzetaK}
 |\zeta_K(1/2+it)| \leq 839 (1+|t|)^{3/4} |\Delta_K|^{3/16}.
 \end{equation}
\end{lem}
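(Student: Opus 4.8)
The plan is to derive the explicit subconvexity bound \eqref{eqboundsubconvnontrivial} from Chandee's GRH-conditional estimate \cite{Chandee09}, which bounds $|L(\chi, 1/2 + it)|$ in terms of the analytic conductor of the Hecke $L$-function. First I would recall that for a Hecke character $\chi$ of $\Cl_K$ of modulus $1$ (a Grossencharakter of finite order, trivial on the archimedean part), the completed $L$-function has analytic conductor $\mathfrak{q}(\chi, t)$ essentially equal to $|\Delta_K| \prod_{v \mid \infty}(3 + |t + \mu_v|)$ where the $\mu_v$ are the archimedean parameters; for a CM quartic field $K$ there are two complex places, so the conductor is $|\Delta_K| (3 + |t|)^2$ up to bounded shifts coming from the $\mu_v$. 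Chandee's theorem gives, under GRH, a bound of the shape $|L(1/2 + it)| \leq \exp\!\big( c_1 \frac{\log \mathfrak{q}}{\log\log \mathfrak{q}} \big)$, but in the convexity-improving regime one actually wants her explicit form $|L(1/2+it)| \le A \cdot \mathfrak{q}^{1/4 - \delta}$ for a fixed small $\delta$ with an explicit $A$; the exponent $3/16 = 1/4 - 1/16$ on $|\Delta_K|$ and $3/4$ on $(1+|t|)$ are exactly what one reads off from $\mathfrak{q}^{3/16}$ after separating the $|\Delta_K|$-part from the $(1+|t|)^2$-part (since $(3+|t|)^{2 \cdot 3/16} = (3+|t|)^{3/8}$, one sees the bookkeeping must instead use the full convexity exponent $1/4$ on the $t$-aspect and the subconvex exponent on the conductor $|\Delta_K|$; I would track this carefully and absorb the discrepancy into the numerical constant). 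The numerical constant $263$ then comes from plugging Chandee's explicit $A$ together with the comparison $3 + |t| \le 3(1 + |t|)$ and the normalization factors in passing from the completed $L$-function to $L(\chi, s)$ itself (dividing by the Gamma factors evaluated at $s = 1/2 + it$, bounded via Stirling).

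Second, for the trivial character I would treat $\zeta_K(s)$ separately because of its pole at $s = 1$. The Dedekind zeta function $\zeta_K$ is not entire, so Chandee's statement does not apply verbatim; instead I would apply it to the entire function $(s-1)\zeta_K(s)$ (or work with $\zeta_K(s)/\zeta(s)$ times $\zeta(s)$ and use the classical bound for $\zeta$), and then divide by $|1/2 + it - 1| = |{-1/2} + it| = \sqrt{1/4 + t^2}$. On the critical line this factor is bounded below by $1/2$, so it costs at most a factor of $2$ in the constant, which is consistent with $839 \approx 263 \cdot (\text{small factor})$ once one also accounts for the fact that $\zeta_K$ has a slightly different completed-function normalization (no primitive character, full conductor $|\Delta_K|$, archimedean factor from one real place $\zeta$ contributes $\Gamma_{\R}$ and the quartic structure contributes the rest). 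I would be careful that the pole does not interfere with the Phragmén–Lindelöf step that will be used later: it is enough here to state the bound on the line $\Re(s) = 1/2$ only.

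The main obstacle I expect is the explicit constant bookkeeping: Chandee's paper states the bound with an explicit but somewhat intricate constant depending on the degree $n = [K:\Q] = 4$ and the number of archimedean places, and one must chase through the normalization of the analytic conductor (whether it includes the $\pi^{-s/2}$ factors, the precise shifts in the $\Gamma$-factor arguments, and whether $t$ is shifted by the spectral parameters $\mu_v$) to land on clean numbers like $263$ and $839$. I would first fix the exact version of Chandee's inequality being cited, specialize $n = 4$ and the signature (two complex places for the trivial character region, or the Hecke character of $\Cl_K$ which has the same archimedean type since it has modulus $1$), bound each $\Gamma$-ratio by Stirling with fully explicit error terms, and then optimize the split of the exponent $3/16$ between conductor and $t$-aspect — accepting a possibly non-optimal but clean exponent $(1+|t|)^{3/4}$ in exchange for a manageable constant. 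Everything else (monotonicity $3 + |t| \leq 3(1+|t|)$, the crude lower bound on $|s - 1|$ for the zeta case) is routine.
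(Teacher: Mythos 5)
Your strategy is the paper's: apply Chandee's GRH-conditional estimate to the vertically shifted Hecke $L$-function $L(\chi_t,s)=L(\chi,s+it)$ (itself attached to the Grossencharakter $\chi_t(I)=\chi(I)N(I)^{-it}$) and read the exponents off its analytic conductor; then handle $\zeta_K$ by factoring out $\zeta$ and using an explicit bound for $\zeta(1/2+it)$. However, there is a concrete error in your conductor bookkeeping which then propagates into an invalid patch. A CM quartic field has $r_1=0$, $r_2=2$, and each complex place contributes \emph{two} gamma factors, so the analytic conductor of $L(\chi_t,\cdot)$ involves four archimedean parameters and is of size $(1+|t|)^4|\Delta_K|$ up to an absolute constant (the paper computes it lies in $[10^{-4},7\cdot 10^{-4}]\cdot(1+|t|)^4|\Delta_K|$), not $(3+|t|)^2|\Delta_K|$ as you wrote. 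With the correct count, $\bigl((1+|t|)^4\bigr)^{3/16}=(1+|t|)^{3/4}$ and the exponents in \eqref{eqboundsubconvnontrivial} match exactly; the ``discrepancy'' between $(1+|t|)^{3/8}$ and $(1+|t|)^{3/4}$ that you propose to ``absorb into the numerical constant'' is an artifact of the miscount, and such a mismatch in the exponent of $(1+|t|)$ could never be absorbed into a constant anyway. Worse, working with an underestimated conductor would produce a numerical constant that is not actually justified for large $|t|$.

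Two further points. First, Chandee's Theorem 2.1 is not of the ready-made form $|L(1/2)|\le A\,\mathfrak{q}^{1/4-\delta}$: it bounds $\log|L(f,1/2)|$ by a prime sum plus $\frac{1+\lambda}{2}\frac{\log C_L}{\log x}$ plus error terms, and her explicit corollary only applies when $\log\log C_L\ge 10$, which cannot be assumed here. One must choose the parameters oneself (the paper takes $\lambda=1/2$ and $x=e^4$, giving the exponent $\frac{1+\lambda}{2\log x}=\frac{3}{16}$ on $C_L$), bound the prime sum via $|a(n)|\le d\Lambda(n)$, and evaluate numerically; that is where $263$ comes from, and your proposal does not carry out this step. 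Second, of your two suggested routes for the trivial character, only the second is viable: $(s-1)\zeta_K(s)$ is not an admissible $L$-function in Chandee's framework (no Euler product or functional equation of the required shape), whereas $\zeta_K/\zeta$ is entire of degree $3$ and admissible; the paper applies Chandee to $\zeta_K/\zeta$ and multiplies by the explicit Platt--Trudgian bound $|\zeta(1/2+it)|\le 13(1+|t|)^{3/16}$ to obtain \eqref{eqboundsubconvzetaK}.
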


The proof of the previous lemma will use a general subconvexity result of Chandee \cite{Chandee09} on entire $L$-functions assuming GRH, for which we need some reminders. 

    For any $t \in \R$, the vertically shifted $L$-function 
\[
 L_t(\chi,s) := L(\chi,s+it) = \sum_I \frac{\chi(I) N(I)^{- i t}}{N(I)^s} = L(\chi_t,s) \textrm{ with  } \chi_t(I) = \chi(I) N(I)^{-it}
\]
is also a Hecke $L$-series associated to a Grossencharakter of modulus 1. The gamma factor of $L(\chi, \cdot)$ is defined by 
\[
 L_\infty(\chi,s) := \pi^{-ds/2} 2^{(1-s) r_2} \gamma\left( \frac{s}{2} \right)^{r_1} \gamma(s)^{r_2} = \pi^{-ds/2 - r_2/2}  \gamma\left( \frac{s}{2} \right)^{r_1 + r_2} \Gamma\left( \frac{s+1}{2} \right)^{r_2}
\]
where $r_1$ is the number of real embeddings and $r_2$ the number of pairs of complex embeddings of $K$ (the second equality coming from Legendre duplication formula). The completed $L$-series 
\[
 \Lambda(\chi,s) :=  |\Delta_K|^{s/2} L_\infty(\chi,s) L(\chi,s)
\]
has a meromorphic continuation to $\C$, satisfies the functional equation 
\[
 \Lambda(\chi,s) = \Lambda(\overline{\chi}, 1-s)
\]
and it is entire unless $\chi$ is trivial.

Following the definitions and notation of \cite{Chandee09}, the L-function $L(\chi,\cdot)$ thus has degree $d = [K:\Q]$, $k_j = 0$ for $j \in \{1,\cdots, r_1+r_2\}$ 
and $k_j=1$ for $j \in \{r_1+r_2+1,\cdots,d\}$ . The arithmetic conductor $q(L(\chi,\cdot))$ is $|\Delta_K|$, and the \emph{analytic conductor} $C(L(\chi,\cdot))$ is 
\[
 C(L(\chi,\cdot)) = \frac{q(L)}{\pi^{d}}  \prod_{j=1}^{d} \left| \frac{1}{4} + \frac{k_j}{2} \right| = \frac{ 3^{r_2}|\Delta_K|}{(4 \pi)^{d}}.
\]

For the vertically shifted $L$-function $L(\chi_t,s)$, $k_j = -it$ for $j \in \{1,\cdots, r_1+r_2\}$ and $k_j = 1 - it$ for $j \in \{r_1+r_2+1,\cdots, d\}$. The conductor of $L_t(\chi,s)$ is then
\begin{equation*}
 C_t := C(L(\chi_t, \cdot))  = \frac{|\Delta_K|}{(4 \pi)^{d}} \sqrt{(1 + 4t^2)^{r_1+r_2} (9 + 4 t^2)^{r_2}}. 
\end{equation*}

We now apply these equalities in our case: $K$ is CM of degree 4, so $d= 4$, $r_2 = 2$ and $r_1 = 0$. We obtain that for all $t \in \R$,
\begin{equation*}
 \frac{C(L(\chi_t,\cdot))}{(1 + |t|)^4|\Delta_K|} \in [10^{-4}, 7 \cdot 10^{-4}].
\end{equation*}

   Furthermore, remark that if we define the coefficients $a(n)$ by the Dirichlet series
\[
 - \frac{L'(\chi,s)}{L(\chi,s)} = \sum_{n \geq 1} \frac{a(n)}{n^s},
\]
using the Euler product, $ |a(n)| \leq d \Lambda(n)$ where $\Lambda$ is the von Mangoldt function.

We now recall the main result of \cite{Chandee09}.

\begin{prop}[\cite{Chandee09}, Theorem 2.1]
	\label{thmChandee}
	Assume GRH. If $L(f,s)$ is an L-function of analytic conductor $C_L$  and degree $d$ and $a(n)$ is the sequence of coefficients of $-L'/L$, for any parameters $\lambda \geq 1/2$ and $x$ such that $\log x \geq \max (2, 2 \lambda)$:
	\begin{eqnarray*}
	\log |L(f,1/2)| & \leq & \Re \left( \sum_{n \leq x} \frac{a(n)}{n^{1/2 + \lambda/\log x}} \frac{\log (x/n)}{\log x \log n}\right) \\
	& + & \left( \frac{1 + \lambda}{2}\right) \frac{\log C_L}{\log x} + \frac{(\lambda^2 + \lambda)d}{(\log x)^2} + \frac{d e^{- \lambda}}{x^{1/2} \log (x)^2}.
	\end{eqnarray*}
\end{prop}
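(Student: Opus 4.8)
The plan is to reproduce, for a general $L$-function, Soundararajan's argument (from his work on moments of $\zeta$) bounding $\log|\zeta(1/2+it)|$ under RH, keeping track of every constant; this is exactly the content of the cited work \cite{Chandee09}. It suffices to establish the estimate at the point $s=1/2$: applying it to the vertically shifted $L$-function $L_t(f,s)=L(f,s+it)$, whose analytic conductor is the quantity $C_L$ appearing in the statement, then gives the bound at $1/2+it$. The hypotheses $\lambda\geq1/2$ and $\log x\geq\max(2,2\lambda)$ ensure that $\sigma_0:=\tfrac12+\tfrac{\lambda}{\log x}\in(\tfrac12,1]$ and that the smoothing weight $\log(x/n)/\log x$ is nonnegative on $2\leq n\leq x$. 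I use freely that $-L'/L(s)=\sum_{n\geq1}a(n)n^{-s}$ with $a(1)=0$ and $|a(n)|\leq d\Lambda(n)$, hence $\log L(s)=\sum_{n\geq2}\frac{a(n)}{n^s\log n}$ for $\Re(s)>1$.

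\emph{Step 1: a GRH lower bound for $\Re\frac{L'}{L}$.} First I would differentiate the Hadamard product of the completed function $\Lambda(f,s)=q(L)^{s/2}L_\infty(f,s)L(f,s)$ and invoke the functional equation to get, for $\Re(s)=\sigma\in[\tfrac12,2]$,
\[
\Re\frac{L'}{L}(\sigma+it)=-\tfrac12\log q(L)-\Re\frac{L_\infty'}{L_\infty}(\sigma+it)+\sum_\rho\Re\frac{1}{\sigma+it-\rho}+(\text{polar term}),
\]
the polar term occurring only when $L$ has a pole at $s=1$. Under GRH every nontrivial zero has $\Re\rho=\tfrac12$, so $\Re\frac{1}{\sigma+it-\rho}\geq0$ for $\sigma\geq\tfrac12$; the polar term has nonnegative real part for $\sigma<1$; and (with $C_L$ normalized as in \cite{Chandee09}) the remaining terms are bounded below by $-\tfrac12\log C_L$. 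Hence $\Re\frac{L'}{L}(\sigma+it)\geq-\tfrac12\log C_L$ on $[\tfrac12,2]$. Since $\partial_\sigma\log|L(\sigma)|=\Re\frac{L'}{L}(\sigma)$, this gives at once
\[
\log|L(\tfrac12)|=\log|L(\sigma_0)|-\int_{1/2}^{\sigma_0}\Re\tfrac{L'}{L}(\sigma)\,d\sigma\leq\log|L(\sigma_0)|+\frac{\lambda}{2}\cdot\frac{\log C_L}{\log x}
\]
(and if $L(\tfrac12)=0$ the claim is trivial).

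\emph{Step 2: a smoothed explicit formula for $\log|L(\sigma_0)|$.} Using $\frac{1}{2\pi i}\int_{(c)}y^w w^{-2}\,dw=\log^+ y$ for $c$ large, one has
\[
\sum_{2\leq n\leq x}\frac{a(n)}{n^{\sigma_0}\log n}\log\frac xn=\frac{1}{2\pi i}\int_{(c)}\log L(\sigma_0+w)\,\frac{x^w}{w^2}\,dw,
\]
and dividing the left side by $\log x$ gives precisely the main sum of the statement (with $n^{\sigma_0}=n^{1/2+\lambda/\log x}$). I would then shift the contour to the left. The kernel $x^w/w^2$ has a double pole at $w=0$ with residue $\log x\cdot\log L(\sigma_0)+\frac{L'}{L}(\sigma_0)$, so after dividing by $\log x$, taking real parts, and bounding $-\frac{1}{\log x}\Re\frac{L'}{L}(\sigma_0)\leq\frac12\cdot\frac{\log C_L}{\log x}$ by Step 1,
\[
\log|L(\sigma_0)|\leq\Re\sum_{2\leq n\leq x}\frac{a(n)}{n^{\sigma_0}\log n}\frac{\log(x/n)}{\log x}+\frac12\cdot\frac{\log C_L}{\log x}-\frac{1}{\log x}\Re\mathcal{E},
\]
where $\mathcal{E}$ collects the integral along the displaced contour together with whatever is encountered at the singularities of $\log L(\sigma_0+w)$ to the left of $w=0$. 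Combined with Step 1 this already produces the coefficient $\frac{1+\lambda}{2}$ on $\frac{\log C_L}{\log x}$, so the statement reduces to $-\frac{1}{\log x}\Re\mathcal{E}\leq\frac{(\lambda^2+\lambda)d}{(\log x)^2}+\frac{de^{-\lambda}}{\sqrt x(\log x)^2}$.

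\emph{Step 3 (the crux): estimating $\mathcal{E}$.} Under GRH, $\log L(\sigma_0+w)$ is holomorphic for $\Re(\sigma_0+w)>\tfrac12$ and has only logarithmic branch points, all on the line $\Re(\sigma_0+w)=\tfrac12$: one at $w=\rho-\sigma_0$ for each zero $\rho$, plus one at $w=1-\sigma_0$ from the pole at $s=1$ in the $\zeta_K$ case. I would deform the displaced contour leftward --- wrapping around these branch cuts and, past the critical line, using the functional equation to continue $\log L$ (which there becomes a bounded remainder plus the explicit $\varepsilon$-factor) --- expressing $\mathcal{E}$ as a convergent sum over the zeros $\rho$ plus a Perron-type tail. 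Since each $\rho$ has $\Re\rho=\tfrac12$ \emph{exactly}, the zero sum is then controlled by the explicit Riemann--von Mangoldt count for the zeros of $L(f,\cdot)$, the hypothesis $|a(n)|\leq d\Lambda(n)$ reducing every auxiliary prime sum to the classical estimates for $\zeta$; optimizing the truncation height against this count, and tracking all constants, yields exactly the two error terms above. This quantitative bookkeeping, refining Soundararajan's method, is what \cite{Chandee09} carries out in full. Assembling Steps 1--3 gives the stated inequality. I expect Step 3 to be the main obstacle: Steps 1--2 are essentially formal once the GRH input is in hand, whereas pinning down the two explicit error terms $\frac{(\lambda^2+\lambda)d}{(\log x)^2}$ and $\frac{de^{-\lambda}}{\sqrt x(\log x)^2}$ --- rather than a weaker bound polluted by a factor $\log C_L$ --- requires the fully quantitative handling of the critical-line zeros.
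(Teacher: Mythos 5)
First, note that the paper does not prove this statement at all: it is imported verbatim as Theorem 2.1 of Chandee's article, so the only ``proof'' in the paper is the citation, and your proposal is really an attempt to reconstruct Chandee's (and ultimately Soundararajan's) argument. Your Steps 1 and 2 do reproduce its opening correctly: under GRH each term $\Re\frac{1}{s-\rho}$ is nonnegative for $\Re(s)\geq 1/2$, which yields $\log|L(f,1/2)|\leq\log|L(f,\sigma_0)|+\frac{\lambda}{2}\frac{\log C_L}{\log x}$, and the Mellin kernel $x^w/w^2$ applied at $\sigma_0=\frac12+\frac{\lambda}{\log x}$ produces the smoothed Dirichlet polynomial together with the residue term $\frac{1}{\log x}\frac{L'}{L}(\sigma_0)$, which Step 1 controls by $\frac12\frac{\log C_L}{\log x}$.

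The genuine gap is Step 3, which you correctly identify as the crux but then defer wholesale to the citation; moreover, the mechanism you sketch for it would fail. One cannot deform the contour leftward across the critical line and ``use the functional equation to continue $\log L$'': under GRH the branch points of $w\mapsto\log L(f,\sigma_0+w)$ lie on $\Re(\sigma_0+w)=\frac12$ with density about $\frac{d}{2\pi}\log C_L$ per unit height, and estimating the resulting branch-cut contributions by the Riemann--von Mangoldt count reintroduces precisely the factor $\log C_L$ in the error terms that the theorem must avoid (as you yourself observe at the end). The actual proof never crosses the critical line: the $x^w/w^2$ kernel is applied to $-L'/L$ rather than to $\log L$, giving a Selberg-type explicit formula in which the zeros enter through the absolutely convergent sum $\frac{1}{\log x}\sum_\rho\frac{x^{\rho-s}}{(\rho-s)^2}$, bounded on GRH in terms of $F(\sigma_0)$ where $F(\sigma):=\sum_\rho\Re\frac{1}{\sigma+it-\rho}$; one then integrates in $\sigma$ and, crucially, reinstates the \emph{negative} quantity $-\int_{1/2}^{\sigma_0}F(\sigma)\,\dd\sigma$ that your Step 1 simply discarded, using it to absorb all the $F(\sigma_0)$ contributions (this is exactly where the hypotheses $\lambda\geq 1/2$ and $\log x\geq 2\lambda$ are consumed). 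What survives are only the conductor-free archimedean and polar remainders $\frac{(\lambda^2+\lambda)d}{(\log x)^2}+\frac{de^{-\lambda}}{x^{1/2}(\log x)^2}$. Without this cancellation between the zero sum and the discarded integral, your outline cannot produce the stated error terms, so the proposal as written does not constitute a proof.
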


\begin{proof} (of Lemma \ref{lemboundssubconv})
Proposition \ref{thmChandee}  applies to all $L(\chi_t,\cdot)$ for $\chi$ nontrivial (as these L-functions are admissible under Chandee's definition), and that paper actually provides an explicit (and sufficient) bound when $\log \log C(L(\chi_t,\cdot)) \geq 10$, but we need a result for all possible analytic conductors. We thus follow the outline \cite[Proof of Corollary 1.1]{Chandee09} . 

Trying to find acceptable values for the first term and the factor before $\log C_L$ in the right-hand side (the latter has to be $<1/4$ to attain subconvexity), we settle for $\lambda = 0.5$ and $x = e^4$ and apply the outline the same arguments, which gives \eqref{eqboundsubconvnontrivial} after numerical computation.

Regarding $\zeta_K$, we notice $\zeta_K/\zeta_\Q$ is an entire L-function of degree 3 admissible for this method, and we obtain with the same choice of $\lambda = 0.5$ and $x = e^4$ inside Proposition \ref{thmChandee} that for all $t \in \R$:
	\[
	\left|\frac{\zeta_K(1/2+it)}{\zeta(1/2+it)} \right| \leq 181 \left|\frac{|\Delta_K| (|1+2 i t| |3+ 2it|^2}{(4 \pi)^3} \right|^{3/16} \leq 60 (1+|t|)^{9/16} |\Delta_K|^{3/16}.
	\]
Next, following \cite{PlattTrudgian15} (and checking for small $|t|$ numerically), we have that for all $t \in \R$:
	\[
	|\zeta(1/2 + it) | \leq  13 (1+|t|)^{3/16}.
	\]
	Combining both bounds, we obtain \eqref{eqboundsubconvzetaK}.
\end{proof}

We can now explain the link between \eqref{eqsumnormlessthanx} and the studied $L$-functions.

Define the function $f(y) := y^{-1/2} e^{-y}$ on $(0,+ \infty)$ so that its Mellin transform is $\Gamma(s-1/2)$.

\begin{defi}\label{def:S}
    For any coset $H = h H_0$ of a subgroup of $\Cl_K$, and any $x>0$, we define
    \[
S_H(x) := \sum_{\substack{I \textrm{ ideal}\\ [I] \in H \\ N(I) \leq x}} f \left( \frac{N(I)}{x}\right).
    \]
    We define similarly for any character $\chi:\Cl_K \ra \C^*$ and any $x>0$
    \[
S(x,\chi) := \sum_{I \textrm{ ideal}} \chi(I) f \left( \frac{N(I)}{x} \right). 
    \]
\end{defi}

These sums will allow us to compute the average sum (\ref{eqsumnormlessthanx}), thanks to the following lemma.

\begin{lem}
\label{lemlinktoSxX}
With the same notation as in Definition \ref{def:S}, we have for every character $\chi$ and $x>0$:
\[
S(x,\chi) = \frac{1}{2 i \pi} \int_{\Re(s) = \sigma} \Gamma \left(s - \frac{1}{2} \right) L(s,\chi) x^s ds ,
\]
and for any $x >0$,
\[
\sum_{\substack{I \textrm{ ideal}\\ [I] \in H \\ N(I) \leq x}} \left(\frac{x}{N(I)} \right)^{1/2} \leq e S_H(x) \leq  \frac{e}{[\Cl_K : H_0]} \sum_{\substack{\chi \\ \chi_{|H_0} \equiv 1}} \chi(h)^{-1}  S(x,\chi).\]

\end{lem}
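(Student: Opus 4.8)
The plan is to establish the two displayed identities/inequalities in Lemma \ref{lemlinktoSxX} separately, and then combine them.

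First, the Mellin identity for $S(x,\chi)$. By construction $f(y) = y^{-1/2}e^{-y}$ has Mellin transform $\int_0^\infty f(y) y^{s-1}\,dy = \Gamma(s-1/2)$, valid for $\Re(s) > 1/2$. Applying Mellin inversion, for $\sigma > 1/2$ one has $f(y) = \frac{1}{2i\pi}\int_{\Re(s)=\sigma} \Gamma(s-1/2) y^{-s}\,ds$. Substituting $y = N(I)/x$ and summing over integral ideals $I$, I get
\[
S(x,\chi) = \sum_I \chi(I) f\!\left(\frac{N(I)}{x}\right) = \frac{1}{2i\pi}\int_{\Re(s)=\sigma} \Gamma\!\left(s-\tfrac12\right) \Big(\sum_I \frac{\chi(I)}{N(I)^s}\Big) x^s\,ds = \frac{1}{2i\pi}\int_{\Re(s)=\sigma} \Gamma\!\left(s-\tfrac12\right) L(s,\chi)\, x^s\,ds,
\]
where one takes $\sigma$ large enough (say $\sigma = 2$) so that the Dirichlet series for $L(s,\chi)$ converges absolutely and the interchange of sum and integral is justified by absolute convergence (the Gamma factor decays exponentially on vertical lines, the ideal-counting function grows polynomially). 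This is the first claim.

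Second, the chain of inequalities. The left inequality $\sum_{[I]\in H,\,N(I)\le x}(x/N(I))^{1/2} \le e\, S_H(x)$ is immediate: for each ideal $I$ with $N(I)\le x$, writing $y = N(I)/x \in (0,1]$, we have $f(N(I)/x) = (x/N(I))^{1/2} e^{-N(I)/x} \ge (x/N(I))^{1/2} e^{-1}$, so each term of $S_H(x)$ is at least $e^{-1}$ times the corresponding term on the left. For the right inequality, I use orthogonality of characters on $\Cl_K$: the indicator of the coset $H = hH_0$ can be written as an average over characters $\chi$ trivial on $H_0$, namely for any ideal $I$,
\[
\mathbb{1}_{[I]\in H} = \frac{1}{[\Cl_K : H_0]} \sum_{\chi_{|H_0}\equiv 1} \chi(h)^{-1}\chi(I),
\]
since the sum over such $\chi$ of $\chi(h^{-1}I)$ is $[\Cl_K:H_0]$ if $[I]\in H_0 h = H$ and $0$ otherwise. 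Plugging this into the definition of $S_H(x)$ and interchanging the (finite) character sum with the ideal sum, and using $f \ge 0$ to drop the constraint $N(I)\le x$ inside $S(x,\chi)$ versus $S_H(x)$ — actually more carefully: $S_H(x)$ only involves ideals of norm $\le x$ but $f$ vanishes nowhere, so I should note $S_H(x) = \sum_{[I]\in H} f(N(I)/x)$ with no norm restriction is the same as keeping it only if $f$ were supported on $(0,1]$; since it is not, the correct reading is that $S_H(x)$ as defined already has the norm cutoff, and one simply bounds it by the full sum $\frac{1}{[\Cl_K:H_0]}\sum_{\chi}\chi(h)^{-1}\sum_I \chi(I) f(N(I)/x)$ after noting each term with $N(I) > x$ contributes, this requires $f\ge 0$ termwise which holds — giving the stated bound with $S(x,\chi)$ on the right. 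Multiplying through by $e$ and combining with the left inequality yields the full chain.

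The main obstacle, such as it is, is bookkeeping rather than depth: one must be careful that $S_H(x)$ has the norm cutoff $N(I)\le x$ built in while $S(x,\chi)$ does not, so the passage from $S_H(x)$ to the character sum involving $S(x,\chi)$ genuinely uses nonnegativity of $f$ (adding in the tail $N(I)>x$ only increases the sum) together with the fact that the character-orthogonality identity holds for \emph{all} ideals, so extending the ideal sum past the cutoff is harmless. I would also make explicit the justification for swapping sum and integral in the Mellin identity (absolute convergence at $\sigma = 2$), and record that the final estimate is stated without an explicit $\sigma$ precisely because the integrand $\Gamma(s-1/2)L(s,\chi)x^s$ is, for $\chi$ nontrivial, holomorphic in the strip $\Re(s) > 1/2$ so the contour may later be shifted — but that shifting is the business of step $(c)$ above, not of this lemma.
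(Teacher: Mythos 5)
Your proof is correct and follows essentially the same route as the paper's: Mellin inversion plus interchange of sum and integral for the first identity, the pointwise bound $e^{-N(I)/x}\geq e^{-1}$ for $N(I)\leq x$ for the left inequality, and character orthogonality on $\Cl_K$ together with nonnegativity of $f$ (to absorb the tail $N(I)>x$) for the right one. The paper compresses the latter into the phrase ``discrete Fourier transform on $\Cl_K$''; your spelled-out version, including the remark that the cutoff lives in $S_H(x)$ but not in $S(x,\chi)$, is exactly the intended argument.
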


\begin{proof}
    First, discrete Fourier transform on the abelian group $\Cl_K$ directly gives that 
    \[
  \sum_{\substack{I \\ [I] \in H}} f \left( \frac{N(I)}{x}\right) \geq  \sum_{\substack{I \\ [I] \in H \\ N(I) \leq x}} f \left( \frac{N(I)}{x}\right) \geq \frac{1}{e}\sum_{\substack{I \\ [I] \in H \\ N(I) \leq x}} \left(\frac{x}{N(I)} \right)^{1/2}.
    \]
Next, by absolute convergence of the series defining $S(x,\chi)$ ($f$ decays rapidly), by $\Gamma$ decaying quickly on vertical lines, and by the inverse Mellin formula we can write for $\sigma>1$:
\begin{eqnarray*}
S(x,\chi) & = & \sum_I \chi(I)\cdot \frac{1}{2 i \pi} \int_{\Re(s) = \sigma} \Gamma \left(s - \frac{1}{2} \right)(N(I)/x)^{-s} ds \\
 & = &  \frac{1}{2 i \pi} \int_{\Re(s) = \sigma} \Gamma \left(s - \frac{1}{2} \right)\sum_I \frac{\chi(I)}{N(I)^s} x^s ds \\
 & = & \frac{1}{2 i \pi} \int_{\Re(s) = \sigma}  \Gamma \left(s - \frac{1}{2} \right) L(s,\chi) x^s ds.
\end{eqnarray*}
    
\end{proof}

We can now explain how to bound the sums $S(x,\chi)$.

\begin{prop}
\label{lemboundSxX}
	For any  $x = \varepsilon |\Delta_K|^{1/2}$ with $\varepsilon \in (0,1]$, we have 
	\begin{eqnarray*}
		|S(x,\chi)| & \leq & 163 \varepsilon^{1/2 + 1/12} |\Delta_K|^{1/2-1/32} \quad (\chi \neq 1) \\
		|S(x,1)| & \leq & 393 \varepsilon^{1/2 + 1/12} |\Delta_K|^{1/2-1/32} + \frac{\pi^{5/2}}{2} \varepsilon  h_K R_K.
	\end{eqnarray*}
\end{prop}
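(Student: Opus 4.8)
The plan is to estimate $S(x,\chi)$ by shifting the contour of integration in the formula
\[
S(x,\chi) = \frac{1}{2 i \pi} \int_{\Re(s) = \sigma} \Gamma\left(s - \frac12\right) L(s,\chi) x^s \, ds
\]
from the line $\Re(s) = \sigma > 1$ (where absolute convergence holds) leftwards to a line $\Re(s) = \sigma_0$ with $\sigma_0$ slightly larger than $1/2$; the precise value of $\sigma_0$ will be chosen at the end to balance the two competing error terms, and it is here that the exponent gain $1/32$ and the auxiliary exponent $1/12$ will emerge. When $\chi \neq 1$ the integrand $\Gamma(s-1/2) L(s,\chi) x^s$ is holomorphic in the strip $1/2 < \Re(s) \leq \sigma$ (the only possible pole of $\Gamma(s-1/2)$ is at $s=1/2$, which lies on the boundary, and $L(\cdot,\chi)$ is entire), so Cauchy's theorem gives directly
\[
S(x,\chi) = \frac{1}{2 i \pi} \int_{\Re(s) = \sigma_0} \Gamma\left(s - \frac12\right) L(s,\chi) x^s \, ds,
\]
provided one justifies that the horizontal pieces vanish as their imaginary part tends to $\pm\infty$, which follows from the super-exponential decay of $\Gamma$ on vertical strips against the polynomial growth of $L(\chi, \cdot)$ guaranteed (in the strip) by Lemma \ref{lemboundssubconv} together with the Phragmén–Lindelöf principle mentioned in step $(a)$. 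For $\chi = 1$ the same contour shift picks up the residue of $\Gamma(s-1/2)\zeta_K(s) x^s$ at the simple pole $s=1$ of $\zeta_K$, whose residue is $\Gamma(1/2)\,\mathrm{Res}_{s=1}\zeta_K(s)\cdot x = \pi^{1/2}\,\frac{2^{r_1}(2\pi)^{r_2} h_K R_K}{w_K \sqrt{|\Delta_K|}}\cdot x$; plugging in $d=4$, $r_1=0$, $r_2=2$, $w_K \geq 2$ and $x = \varepsilon|\Delta_K|^{1/2}$ gives a main term bounded by $\frac{\pi^{5/2}}{2}\varepsilon h_K R_K$, which is exactly the extra summand in the second displayed bound.

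Next I would bound the shifted integral. On the line $\Re(s) = \sigma_0$, write $s = \sigma_0 + it$ and split $\Gamma(\sigma_0 - 1/2 + it) L(\sigma_0+it,\chi) x^{\sigma_0+it}$ in absolute value as
\[
|x|^{\sigma_0} \cdot \bigl|\Gamma(\sigma_0 - \tfrac12 + it)\bigr| \cdot \bigl|L(\sigma_0+it,\chi)\bigr|.
\]
For $|L(\sigma_0 + it,\chi)|$ I would use the convexity-type bound in the strip obtained by interpolating (Phragmén–Lindelöf) between the subconvex bound on the critical line from Lemma \ref{lemboundssubconv}, namely $|L(1/2+it,\chi)| \leq 263(1+|t|)^{3/4}|\Delta_K|^{3/16}$, and the trivial bound $|L(2+it,\chi)| = O(1)$ on $\Re(s) = 2$; this yields an estimate of the shape $|L(\sigma_0+it,\chi)| \ll (1+|t|)^{\alpha(\sigma_0)} |\Delta_K|^{\beta(\sigma_0)}$ with $\beta(\sigma_0)$ decreasing linearly from $3/16$ to $0$ as $\sigma_0$ runs from $1/2$ to $2$. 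Then $|x^{\sigma_0}| = \varepsilon^{\sigma_0}|\Delta_K|^{\sigma_0/2}$, so the $|\Delta_K|$-exponent of the whole integrand is $\sigma_0/2 + \beta(\sigma_0)$, and the point is that this is strictly less than $1/2$ for $\sigma_0$ a little above $1/2$; choosing $\sigma_0$ appropriately makes it equal to $1/2 - 1/32$, with the residual $\varepsilon$-power coming out as $\varepsilon^{1/2 + 1/12}$ and the $(1+|t|)$-powers being absorbed by $\int_{\R} |\Gamma(\sigma_0 - 1/2 + it)| (1+|t|)^{\alpha(\sigma_0)}\, dt < \infty$, a finite absolute constant since $\Gamma$ decays like $e^{-\pi|t|/2}$. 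Collecting the numerical constants (the $263$ or $839$ from Lemma \ref{lemboundssubconv}, the Phragmén–Lindelöf interpolation constant, the value of the $\Gamma$-integral, and the Legendre/Stirling bookkeeping) produces the explicit $163$ and $393$.

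The step I expect to be the main obstacle is the bookkeeping of explicit constants in the Phragmén–Lindelöf interpolation and in the convergent $\Gamma$-integral $\int_\R |\Gamma(\sigma_0 - 1/2 + it)|(1+|t|)^{\alpha}\,dt$: getting a clean statement requires a version of Phragmén–Lindelöf with effective constants on a vertical strip (one must control the growth order uniformly, typically via a $e^{\varepsilon|t|}$-type majorant and the three-lines lemma), and one must be slightly careful that the bound from Lemma \ref{lemboundssubconv} is stated only on the critical line, so the interpolation needs the classical fact that $\Lambda(\chi,s)$ has finite order — which is standard but must be invoked explicitly. A secondary subtlety is the justification that the horizontal segments in the contour shift vanish: this again needs a locally-uniform-in-$\Re(s)$ polynomial bound on $L$, obtainable from the functional equation plus Stirling, combined with the rapid decay of $\Gamma$. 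Once these analytic inputs are in place, optimizing $\sigma_0$ is a one-variable calculus exercise and the rest is arithmetic with the constants.
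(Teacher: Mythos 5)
Your outline is the same as the paper's proof: shift the contour from $\Re(s)=\sigma>1$ to $\sigma_0$ slightly right of $1/2$ (picking up the residue at $s=1$ when $\chi=1$), bound $L(\sigma_0+it,\chi)$ by Phragm\'en--Lindel\"of interpolation between the GRH subconvexity bound of Lemma \ref{lemboundssubconv} on the critical line and the trivial bound $\zeta(2)^4$ on $\Re(s)=2$, control $\Gamma$ on the vertical line by an explicit (Lerch-type) estimate, and choose $\sigma_0=1/2+1/12$ so that $\sigma_0/2+\tfrac{3}{16}\ell(\sigma_0)=1/2-1/32$, which also produces the factor $\varepsilon^{1/2+1/12}$. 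So the strategy is right; the constants $163$ and $393$ are then pure numerics.

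Two concrete points need repair. First, for $\chi=1$ your Phragm\'en--Lindel\"of step does not apply as stated: $\zeta_K$ has a pole at $s=1$ inside the strip $1/2\leq\Re(s)\leq 2$, so one cannot interpolate the critical-line bound \eqref{eqboundsubconvzetaK} directly against a bound on $\Re(s)=2$. The paper circumvents this by applying Phragm\'en--Lindel\"of to the \emph{entire} $L$-function $\zeta_K/\zeta_\Q$ (degree $3$), and bounding $\zeta_\Q$ separately via an explicit pointwise estimate of Yang on the line $\Re(s)=5/7$ followed by interpolation only between $1/2$ and $5/7$, i.e.\ on a substrip avoiding the pole; some such device is needed in your write-up, and it changes the numerical constant (this is where $393$ rather than $163$ comes from). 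Second, your residue bookkeeping is off: with $\kappa_K=\frac{2^{r_1}(2\pi)^{r_2}h_KR_K}{w_K|\Delta_K|^{1/2}}=\frac{4\pi^2 h_KR_K}{w_K|\Delta_K|^{1/2}}$ and only $w_K\geq 2$, the main term $\Gamma(1/2)\,\kappa_K\,x$ is bounded by $2\pi^{5/2}\varepsilon h_KR_K$, not by $\frac{\pi^{5/2}}{2}\varepsilon h_KR_K$; you would need $w_K\geq 8$ to reach the stated constant. (The paper's own proof asserts $\kappa_K=h_KR_K\pi^2/2$ at this point, which is inconsistent with the formula $\kappa_K=2\pi^2h_KR_K/|\Delta_K|^{1/2}$ it uses later, so this factor of $4$ appears to be an issue in the source as well, but your derivation from $w_K\geq2$ does not justify the claimed constant.)
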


\begin{proof}
    First, we shift the vertical integrals appearing to some $\sigma \in (1/2,1)$ (to be fixed later). For $\chi$ nontrivial, there is no pole between the two vertical lines and for $\chi$ trivial, there is a pole at $s=1$ with residue $\Gamma(1/2) \kappa_K x$ (where $\kappa_K$ is the residue of $\zeta_K$ at $s=1$), so we get
    \[
S(x,\chi) = \frac{1}{2 i \pi}\int_{\Re(s)=\sigma} \Gamma \left( s - \frac{1}{2} \right) L(s, \chi) x^s ds + \mathbb{1}_{\chi \equiv 1} \Gamma(1/2)  \kappa_K x
    \]
    
    Define $\ell$ the affine function interpolating 1 at $1/2$ and 0 at 2 (i.e. $\ell(\sigma) = - 2/3 \sigma + 4/3$). 

Assume first that $\chi$ is nontrivial. As $L(\chi,\cdot)$ is then holomorphic, by the Phragmen-Lindelöf principle \cite[Theorem 5.53]{IwaniecKowalski}, for every $\sigma \in [1/2,2]$:
\[
 |L(\sigma + i t, \chi)|  \leq  263^{\ell(\sigma)} (1+|t|)^{3\ell(\sigma)/16 } \zeta(2)^{4(1-\ell(\sigma))} |\Delta_K|^{3\ell(\sigma) /16}.
\]
Define $c_\sigma := 263^{\ell(\sigma)} \zeta(2)^{4 (1 - \ell(\sigma))}$ (as $\sigma$ will be close to $1/2$, it will be close to but slightly inferior to 263).  We use for the $\Gamma$ function the Lerch bound \cite[p. 15]{Godefroy}
\begin{equation}
\label{eqbound}
|\Gamma(\sigma + it) | \leq \Gamma(1+ \sigma) \frac{\sqrt{1+t^2}}{\sqrt{\sigma^2 + t^2}} \sqrt{\frac{t \pi}{\sinh (t \pi)}}. 
\end{equation}

With this and writing $x = \varepsilon |\Delta_K|^{1/2}$ with $\varepsilon \leq 1$, we obtain

\begin{equation*}
\label{eqboundSxX}
	|S(x,\chi)|  \leq  \frac{c_\sigma \Gamma(1/2+\sigma)}{2 \pi} \varepsilon^{\sigma} |\Delta_K|^{\sigma/2 + 3 \ell(\sigma)/16} \int_{\R} (1+|t|)^{3/16} \sqrt{\frac{(1+t^2) \pi t}{((\sigma-1/2)^2 + t^2)\sinh(\pi t)}} dt.
\end{equation*}

Now, we need the exponent of $|\Delta_K|$ to be less than $1/2 - 1/32$. For this, an acceptable value of $\sigma$ is 
\[
\sigma_1 := \frac{1}{2} + \frac{1}{12},
\] 
so $\ell(\sigma_1) = 17/18$ and $c_{\sigma_1} \simeq 216$. For this value, we then get by numerical computation 

\begin{equation*}
|S(x,\chi)| \leq 163 \varepsilon^{\sigma_1} |\Delta_K|^{1/2 - 1/32}.
\end{equation*}

 Assume now that $\chi$ is trivial. We can apply the Phragmen-Lindelöf bound to $\zeta_K(s)/\zeta_\Q(s)$ with the bounds in the proof of Lemma \ref{lemboundssubconv}, and separately to $\zeta_\Q$. For the latter, we make use of Theorem 1 of \cite{Yang23} (for $k=4$) which implies (after real analysis and completing it for small values of $t$) that for $\sigma' = 5/7$ and all $t \in \R$, 
\[
|\zeta(\sigma' + i t)|\leq 6.4 (1+|t|)^{3/16 \ell (\sigma')}.
\]
Using Phragmen-Lindelöf interpolation principle now between $1/2$ and $\sigma'$, we get the bound 
\[
|\zeta(\sigma_1 + i t)|\leq 10 (1+|t|)^{3/16\ell (\sigma_1)  }.
\]
Applying now the same method as above (which boilds down to replacing $c_{\sigma_1} \simeq 216$ by 520 after computations and evaluating the same terms otherwise), we get

\begin{equation*}
	|S(x,1)| \leq   393 \varepsilon^{\sigma_1} |\Delta_K|^{1/2-1/32} + \kappa_K \Gamma(1/2) x.
\end{equation*}

As the residue $\kappa_K$ of $\zeta_K$ at $s=1$ is $h_K R_K \pi^{2}/2$ and $\Gamma(1/2) = \sqrt{\pi}$, we obtain the result.

\end{proof}
\begin{proof}[Proof of Proposition \ref{propsubconv}]
We now give the final steps of the proof. First, for any $x = \varepsilon |\Delta_K|^{1/2}$ with $\varepsilon \in (0,1)$, combining Proposition \ref{lemboundSxX} and \ref{lemlinktoSxX}, we obtain 
\begin{eqnarray*}
\frac{1}{|H|} \sum_{\substack{[I] \in H \\ N([I]) \leq x}} \left( \frac{x}{N(I)} \right)^{1/2} & \leq & \frac{e}{h_K} \sum_{\substack{\chi \\ \chi_{|H_0} \equiv 1}} |S(x,\chi)| \\
& \leq &  \frac{e}{h_K} 163 [\Cl_K : H_0] \varepsilon^{1/2+1/12} |\Delta_K|^{1/2-1/32} \\
& + & \frac{e}{h_K} (230 \varepsilon^{1/2+1/12} |\Delta_K|^{1/2-1/32} + \frac{\pi^{5/2} \varepsilon h_K R_K}{2})  \\
& \leq & C(H) \varepsilon^{1/2 + 1/12} + \varepsilon \frac{e \pi^{5/2} R_K}{2}. \\
C(H) & = & 163 e \frac{|\Delta_K|^{1/2-1/32}}{|H|} + 230 e \frac{|\Delta_K|^{1/2-1/32}}{h_K}.
\end{eqnarray*}

Now, for any $\varepsilon>0$, for each class $[I]$ of $H$, either the class has an integral ideal with norm less than $\varepsilon |\Delta_K|^{1/2}$ (hence it is counted in the previous sum), or it does not have it and then $|\Delta_K|^{1/2}/N([I]) \leq \varepsilon^{-1}$. 
By an immediate minimisation of the function $C(H) \varepsilon^{1/12} + \varepsilon^{-1/2}$, we obtain after bounding roughly 

\begin{eqnarray*}
\frac{1}{|H|} \sum_{[I] \in H}  \left( \frac{|\Delta_K|^{1/2}}{N([I])} \right)^{1/2} & \leq & 80 \frac{|\Delta_K|^{1/2-1/32}}{|H|} +  108 \frac{|\Delta_K|^{1/2-1/32}}{h_K} + 10 R_K.
\end{eqnarray*}

The constant $c$ in Proposition \ref{propsubconv} will thus be given by 80, and we will now bound the two last terms on the right to compute $c'(F)$.

First, considering the residue of $\zeta_K$ as $s=1$ again \cite[Thm. XIII.3.2]{MR1282723}, we have (for $K\neq\mathbb{Q}(\zeta_5)$):
\[
\kappa_K = \frac{2 \pi^2 h_K R_K}{|\Delta_K|^{1/2}}. 
\]
That is bounded, according to \cite[Theorem 4 (2)]{Louboutin05} (assuming $|\Delta_K|^{1/4} \geq 98$) by
\[
\kappa_K = \frac{2 \pi^2 h_K R_K}{|\Delta_K|^{1/2}} \geq \frac{2}{e \log |\Delta_K|}.
\]
This gives 
\[
 108 \frac{|\Delta_K|^{1/2-1/32}}{h_K} \leq 5800 \frac{R_K \log |\Delta_K|}{|\Delta_K|^{1/32}} \leq 1.4 \cdot 10^5 R_F.
\]
Where the last inequality comes from \cite[Proposition 4.16]{WashingtonCyclotomic} and that the maximum of the function $\frac{\log x}{x^{1/32}}$ is attained at $x=e^{32}$. This concludes the proof of Proposition \ref{propsubconv}.
\end{proof}

\begin{rem} We could have use at the end of the proof the unconditional explicit result by Stark in \cite[Thm. 1']{stark} instead of \cite[Theorem 4 (2)]{Louboutin05}. However, for big discriminants this would give a bound of the form $\kappa_K\geq\frac{c}{|\Delta_K|^{1/4}}$, which is not sufficiently fine for our argument at the end of Section \ref{sec:boundh} to prove Theorem \ref{thm:main}. 
\end{rem}

\begin{rem}
The huge value of $c'(F)$ might be improved by refining the subconvexity estimates for the Dedekind zeta function $\zeta_K$ on the critical strip, in terms of $\kappa_K$.

Additionally, remark that besides from the quite trivial dichotomy at the end of the proof, one never actually counts the ideals of minimal norm in classes, but all ideals of bounded norm. This suggests that the estimates given here might be much too large compared to what one should expect but this natural idea is surprisingly difficult to exploit in a significant way. In particular, it is not clear to the authors whether the bound given for $c'(F)$ in Proposition \ref{propsubconv} could have its last term constant instead of linear in $R_F$. In other words, one can raise the following (slightly weaker) question: 

\begin{question}\label{question} Is  
\[
\frac{1}{h_K} \sum_{\Ccal \in \Cl_K} \left( \frac{|\Delta_K|^{1/2}}{N(\Ccal)} \right)^{1/2}
\]
uniformly bounded when $K$ goes through all cyclic quartic CM fields?
\end{question}

What suggests that this might be the case is that counting the number of ideals of norm less than $|\Delta_K|^{1/2}$ (divided by $h_K$) by similar methods also leads to a main term linear in $R_F$ when $|\Delta_K|^{1/2-1/32}/h_K$ is small (which it is asymptotically). In other words, one can prove that asymptotically the average value of $|\Delta_K|^{1/4}/N(I)^{1/2}$ amongst all ideals $I$ such that $N(I) \leq |\Delta_K|^{1/2}$ is bounded by an absolute constant when  $|\Delta_K|^{1/2-1/32}/h_K$ is small. 

Furthermore, let us consider the Hilbert modular surface $S_F$ associated to principally polarised abelian surfaces with multiplication by $\Ocal_F$ and assume for simplicity that $h_F=1$. By \cite[Lemma 3.6 $(ii)$]{HabeggerPazuki17}, denoting by $\mu(\infty,\tau)$ the inverse of the distance of the point $\tau$ in the fundamental domain of $S_F$ (closest to $\infty$) associated to $A_{I,\xi,\Phi}$ to the cusp $\infty$ of $S_F$, we know that $\mu(\infty,\tau)$ is less than an absolute constant times $|\Delta_K|^{1/2}/N ([I])$. On another hand, assuming subconvexity bounds 
 (which we can under GRH, or using \cite[Theorem 1.2]{MichelVenkatesh10}), Zhang's equidistribution theorem holds \cite[Theorem 3.4]{Zhang} so the average value of $\sqrt{\mu(\infty,\tau)}$ over the Galois orbit of $A_{I,\xi,\Phi}$ has limit 
 \begin{equation*}
\frac{1}{\vol(D_F)} \int_{D_F} \sqrt{\mu(\infty,z)} \omega(z)
 \end{equation*}
where $D_F$ is a fundamental domain of $S_F$ in the sphere of influence of $\infty$ and $\omega$ the canonical measure on $D_F$, e.g, given on the square $\Hcal^2$ of the Poincaré upper-half plane with complex coordinates $z_1=(x_1,y_1)$ and $z_2=(x_2,y_2)$ by 
\[
\omega(z) = \frac{\dd x_1 \wedge \dd y_1 \wedge \dd x_2 \wedge \dd y_2}{4 \pi^2 y_1^2 y_2^2}.
\].
    
Now, Siegel's theorem \cite[Theorem IV.1.1]{vanderGeerHilbert} states that $\vol(D_F) = 2 \zeta_F(-1) \gg |\Delta_F|^{3/2}$, and one can prove that if $(z_1,z_2) \in D_F$, $y_1 y_2 \geq \frac{1}{4|\Delta_F|}$ by \cite[Lemma I.2.2, notice the typo interverting $s$ and $s^{-1}$ in the conclusion of the proof]{vanderGeerHilbert} and $(x_1,x_2)$ belongs to a fundamental domain of $\R^2 / \Phi(\Ocal_F)$ which is of volume $\sqrt{|\Delta_F|}$ up to a constant.

Combining these arguments, one can prove (after the change of variables $(t,u) = (\sqrt{y_1y_2},  \scriptstyle{\sqrt{y_1/y_2}}$) that 
\[
\frac{1}{\vol(D_F)} \int_{D_F} \mu(\infty,\tau) \omega (z) \ll \frac{1}{|\Delta_F|} \int_{e^{-R_F/2}}^{e^{R_F}/2} \frac{\dd u}{u} \int_{t \geq 1/\sqrt{|\Delta_F|}} \frac{\dd t}{t^2} \ll \frac{R_F \sqrt{|\Delta_F|}}{|\Delta_F|}
 \]

Unfortunately, the authors did not achieve to reach a definitive conclusion in one direction or the other regarding this situation, which could have very interesting consequences regarding this problem.

\end{rem}

\subsection{From a polynomial term to a logarithmic one}

The goal of this section is to explicitly upper bound $\frac{| \Delta_K |^{1/2}}{\#H}$ in terms of $\log \Delta_K$.

\begin{lem}\label{lem:delta}
For any positive integer $N$ we denote by $\delta(N)$ the number of different prime numbers dividing $N$. We then have for all $N \geq 10$:

\[
2^{\delta(N)} \leq N^{\frac{1}{\log \log N}}.
\]
\end{lem}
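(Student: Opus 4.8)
The statement to prove is that for every integer $N \geq 10$, one has $2^{\delta(N)} \leq N^{1/\log\log N}$, where $\delta(N)$ counts the distinct prime divisors of $N$. The natural approach is to take logarithms: the inequality is equivalent to $\delta(N) \log 2 \leq \frac{\log N}{\log\log N}$, i.e. to $\delta(N) \log\log N \leq \frac{\log N}{\log 2}$. So the whole problem reduces to an upper bound for $\delta(N)$ in terms of $\log N$.

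First I would bound $\delta(N)$ crudely: if $N$ has $k := \delta(N)$ distinct prime factors, then $N \geq p_1 p_2 \cdots p_k \geq 2 \cdot 3 \cdot 5 \cdots p_k$, the product of the first $k$ primes (the primorial). Hence $\log N \geq \vartheta(p_k) = \sum_{i\le k} \log p_i$, and since $p_k \geq k$ roughly, one gets $\log N \gtrsim k \log k$ for $k$ large, which already shows $k = \delta(N) = o(\log N / \log\log N) \cdot (\text{something})$; more precisely one extracts $\delta(N) \leq (1+o(1)) \frac{\log N}{\log\log N}$. That asymptotic statement is exactly the shape we want, but here we need it as an \emph{explicit} inequality valid for all $N \geq 10$, with the clean constant making $2^{\delta(N)} \leq N^{1/\log\log N}$ hold on the nose. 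The cleanest route is: let $k = \delta(N)$ and note $N \geq \prod_{i=1}^k p_i$; one then checks, either by a known explicit estimate on primorials (e.g. $\prod_{i\le k} p_i > 2^k$ for $k \geq 1$, and much more, e.g. $\prod_{i \le k} p_i \geq $ something growing like $e^{k\log k}$) or by a direct induction, that $\log N \geq k \log 2 \cdot \log\log N$ once $N \geq 10$. Concretely, I would split into two regimes: for small $N$ (say $10 \leq N \leq N_0$ for an explicit threshold $N_0$, where $\log\log N$ is small and could cause trouble) verify the inequality essentially by hand / finite check on the few possible values of $\delta(N)$; for large $N$ use the primorial bound $\log N \geq \vartheta(p_k)$ together with an explicit lower bound for $\vartheta(p_k)$ (Chebyshev-type, or just $p_k \geq $ the $k$-th prime $\geq k\log k$ type estimates) to conclude $\delta(N)\log 2 \log\log N \leq \log N$.

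The key inequality to nail down is therefore: if $N$ is divisible by exactly $k$ distinct primes then $\log N \geq k (\log 2)(\log\log N)$ for $N \geq 10$. Since the right side is increasing in $N$ only through the $\log\log N$ factor while the left side, for \emph{fixed} $k$, is minimized by taking $N$ to be the product of the first $k$ primes, it suffices to prove $\log P_k \geq k(\log 2)(\log\log P_k)$ where $P_k = p_1\cdots p_k$ is the $k$-th primorial, for all $k \geq 1$ with $P_k \geq 10$ (i.e. $k \geq 3$), plus a separate check for $k \leq 2$. For $k\ge 3$, $\log P_k = \vartheta(p_k)$ and one can invoke an explicit Chebyshev bound such as $\vartheta(x) > 0.8 x$ for $x$ not too small, combined with $p_k > k \log k$, yielding $\log P_k \gtrsim k\log k$, while $\log\log P_k \approx \log(k\log k) \sim \log k$; the ratio $\frac{\log P_k}{k \log\log P_k}$ then tends to $1 > \log 2$, and one checks the finitely many small $k$ numerically.

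\textbf{Main obstacle.} The delicate point is the transition region — moderate values of $N$ (equivalently moderate $k = \delta(N)$, say $k$ between $3$ and $10$) — where the asymptotic primorial estimates are not yet comfortably in force and $\log\log N$ is small enough that constants matter. I expect the proof to handle this by an explicit finite verification: for each small $k$, the minimal $N$ with $\delta(N) = k$ is the $k$-th primorial $P_k$, and one checks $2^k \leq P_k^{1/\log\log P_k}$ directly for those $k$, then argues monotonicity to cover all $N$ with that $\delta(N)$. Getting the bookkeeping of this case split clean, and choosing the threshold so that the "large $N$" Chebyshev argument genuinely applies, is the only real work; everything else is a short logarithmic manipulation.
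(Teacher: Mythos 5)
Your plan is correct, but it is not the route the paper takes. The published proof is a one-line citation of an explicit theorem of Robin (\cite[Thm.~11]{Robin83}), which states that $\delta(N)\leq 1.3841\,\frac{\log N}{\log\log N}$ for $N\geq 3$; since $1.39\cdot\log 2\approx 0.96<1$, the inequality $\delta(N)\log 2\leq \frac{\log N}{\log\log N}$ follows immediately, which is exactly your logarithmic reformulation. Your alternative --- reduce to the primorial $P_k=p_1\cdots p_k$, use an explicit Chebyshev-type lower bound $\vartheta(p_k)\gtrsim k\log k$, and finish with a finite check for moderate $k$ --- is sound and self-contained, and you correctly flag the two genuinely delicate points: the monotonicity step (one needs $N\mapsto \log N-k\log 2\,\log\log N$ increasing for $N\geq P_k$, which holds because $P_k\geq 2^k$), and the transition region of small $k$, where the margin is thinnest (e.g.\ at $k=5$, $P_5=2310$, one has $5\log 2\,\log\log P_5\approx 7.09$ against $\log P_5\approx 7.74$), so the finite verification is genuinely needed and does go through. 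What the paper's approach buys is brevity and no case analysis; what yours buys is independence from Robin's result at the cost of carrying out the explicit primorial estimates and the finite check, which you have outlined but not completed. To turn your sketch into a full proof you would still need to fix the explicit Chebyshev constant and the threshold $k_0$ beyond which the asymptotic argument takes over, and verify $k<k_0$ by direct computation.
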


\begin{proof}
This is a straightforward consequence of \cite[Thm. 11]{Robin83}, and the fact that $\log(2) \times 1.39 \sim 0.96 <1$.
\end{proof}

\begin{prop}\label{deltalog}

Assuming GRH, for any CM triple $(\xi,I,\Phi)$ on $K$, the Galois orbit of the associated abelian variety is of cardinality $|H|$ with the coset $H$ of a subgroup of $\Cl_K$ such that
\[
\frac{| \Delta_K |^{1/2}}{\#H}\leq 215 h_F^3 R_F \log |\Delta_K||\Delta_K|^{1/\log \log |\Delta_K|}
\]
for $\Delta_K> 9.3 \cdot 10^7$. 
\end{prop}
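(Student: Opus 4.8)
The plan is to rewrite $|\Delta_K|^{1/2}/\#H$ as $[\Cl_K:H_0]\cdot|\Delta_K|^{1/2}/h_K$, where $H_0$ is the subgroup of $\Cl_K$ having $H$ as a coset, and then bound the two factors separately. First, by the Main Theorem of Complex Multiplication and Shimura reciprocity applied to $A_{I,\xi,\Phi}$ --- writing $\Gal(K/\Q)=\langle\sigma\rangle$ with complex conjugation $\rho=\sigma^2$ --- the Galois orbit of $A_{I,\xi,\Phi}$ over $\Q$ corresponds to the ideal classes in $H_0:=N_{\Phi^r}(\Cl_{K^r})$, the image of the reflex type norm. Since the only proper subfield of the cyclic quartic CM field $K$ is the real field $F$, every CM type is primitive with trivial stabiliser, so the reflex field $K^r$ is $K$ itself and $H_0=\{[\mathfrak a]^{1+\sigma^{-1}}:[\mathfrak a]\in\Cl_K\}$; thus $\#H=\#H_0$, and by the first isomorphism theorem $[\Cl_K:H_0]=\#\ker(N_{\Phi^r})$, giving
\[
\frac{|\Delta_K|^{1/2}}{\#H}=\#\ker(N_{\Phi^r})\cdot\frac{|\Delta_K|^{1/2}}{h_K}.
\]

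For the factor $|\Delta_K|^{1/2}/h_K$ I would reuse the GRH ingredients already used in the proof of Proposition~\ref{propsubconv}: the residue $\kappa_K=2\pi^2 h_K R_K/|\Delta_K|^{1/2}$ of $\zeta_K$ at $s=1$ (note $K\neq\Q(\zeta_5)$ automatically, its discriminant being far below the threshold) satisfies, under GRH and for $|\Delta_K|^{1/4}\ge 98$, the Louboutin bound $\kappa_K\ge 2/(e\log|\Delta_K|)$ of \cite[Thm.~4(2)]{Louboutin05}; the hypothesis $\Delta_K>9.3\cdot 10^7$ is precisely $|\Delta_K|>98^4$. Combining this with $R_K=\tfrac2Q R_F\le 2R_F$, where $Q\in\{1,2\}$ is the Hasse unit index of the CM extension $K/F$, yields
\[
\frac{|\Delta_K|^{1/2}}{h_K}\le \pi^2 R_K\, e\log|\Delta_K|\le 2e\pi^2 R_F\log|\Delta_K|.
\]

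It remains to bound $\#\ker(N_{\Phi^r})=[\Cl_K:H_0]$, which I would do via genus theory. A short computation with $\sigma$ (using that $N_{K/\Q}(\mathfrak a)\Ocal_K$ is principal) places $\ker(N_{\Phi^r})$ inside the ambiguous class group $\Cl_K^{\Gal(K/F)}$, and in fact inside $\{x\in\Cl_K:x^2\in j(\Cl_F)\}$, where $j:\Cl_F\to\Cl_K$ is the extension map; hence $\#\ker(N_{\Phi^r})\le h_F\cdot\#\Cl_K[2]$. Genus theory for the cyclic quartic field $K/\Q$ bounds $\rk_2(\Cl_K)$ linearly in the number $\delta(\Delta_K)$ of rational primes ramifying in $K$; keeping track of the constants --- together with the slack coming from the subgroup cut out by the principal polarisation, which costs at most a further bounded power of $h_F$ and of $2$ --- should give $[\Cl_K:H_0]\le 4\,h_F^3\,2^{\delta(\Delta_K)}$. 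Then Lemma~\ref{lem:delta} (applicable since $\Delta_K>10$) converts $2^{\delta(\Delta_K)}$ into $|\Delta_K|^{1/\log\log|\Delta_K|}$, and multiplying the three estimates,
\[
\frac{|\Delta_K|^{1/2}}{\#H}\le 8e\pi^2\,h_F^3 R_F\log|\Delta_K|\,|\Delta_K|^{1/\log\log|\Delta_K|}\le 215\,h_F^3 R_F\log|\Delta_K|\,|\Delta_K|^{1/\log\log|\Delta_K|},
\]
since $8e\pi^2<215$.

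The hard part will be the explicit index bound $[\Cl_K:H_0]\le 4\,h_F^3\,2^{\delta(\Delta_K)}$: one must pin down exactly which coset $H$ is (Shimura reciprocity together with the constraint imposed by the principal polarisation), prove an explicit genus-theoretic bound on the $2$-rank of $\Cl_K$ for cyclic quartic $K$, and control uniformly the Hasse unit index and the map $j:\Cl_F\to\Cl_K$, all while keeping the numerical constants small enough to land at $215$. Should the clean power of $h_F$ come out to be $1$ rather than $3$, the stated inequality still follows a fortiori, so the exponent $3$ is a safe over-estimate.
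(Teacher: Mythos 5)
Your strategy coincides with the paper's: the same factorisation $|\Delta_K|^{1/2}/\#H=[\Cl_K:H_0]\cdot |\Delta_K|^{1/2}/h_K$ with $H_0$ the image of the type norm, the same GRH input (Louboutin's $\kappa_K\ge 2/(e\log|\Delta_K|)$ for $|\Delta_K|^{1/4}\ge 98$, combined with $R_K\le 2R_F$ from Washington), the same use of Lemma \ref{lem:delta} to convert $2^{\delta}$ into $|\Delta_K|^{1/\log\log|\Delta_K|}$, and the same final constant $8e\pi^2<215$. The analytic half of your argument is complete and correct.

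The genuine gap is the one you flag yourself: the explicit index bound $[\Cl_K:H_0]\le 4h_F^3\,2^{\delta}$ is only announced (\enquote{should give}), and it is exactly the nontrivial arithmetic input of the proposition. The paper does not reprove it either; it imports it. Habegger--Pazuki's Theorem 3.9 and Lemma 3.10 identify the Galois orbit with a coset $H$ of $H_0=N_\Phi(\Cl_K)$ and give $\#H_0\ge h_K/(h_F\,\#\Cl_K[2])$, and a further quoted genus-theory estimate gives $\#\Cl_K[2]\le 2^{2\rk_2(\Cl_F[2])+2+\delta}\le 4h_F^2\,2^{\delta}$, where $\delta$ is the number of primes of $\Ocal_F$ ramified in $\Ocal_K$, i.e.\ $\delta=\delta(|\Delta_{K/F}|)$ (a harmless difference from your $\delta(\Delta_K)$, since Lemma \ref{lem:delta} applies to either and $|\Delta_{K/F}|\le|\Delta_K|$). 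Multiplying these yields precisely your target $4h_F^3 2^{\delta}$, so your route does land on the right bound --- but as written the hardest step rests on an unproved \enquote{should}: you would need either to carry out the Shimura-reciprocity and genus-theory computation explicitly (including the $+2$ and the $2\rk_2(\Cl_F[2])$ terms you wave at via the Hasse unit index and the map $j:\Cl_F\to\Cl_K$), or to cite these two results as the paper does.
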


\begin{proof}
First, by \cite[Theorem 3.9 and Lemma 3.10]{HabeggerPazuki17}, the Galois orbit of the abelian variety is in bijection with $H_0 = N_{\Phi}(\Cl_K)$ with $N_{\Phi}$ the type norm associated to $\Phi$ (here $K$ is Galois), and this image is itself of order $\#H = \#H_0 \geq \frac{h_K}{\#\operatorname{Cl}_K[2] h_F}$.

Now using again \cite[Thm. 4 (2)]{Louboutin05} under GRH, we have $h_K R_K \geq \frac{| \Delta_K|^{1/2}}{\pi^2 e \log \Delta_K}$ and $R_K\leq2R_F$ by \cite[Prop. 4.16]{WashingtonCyclotomic}.
On the other hand \cite[Prop. 6.3(2)]{Zhang} implies $\#\operatorname{Cl}_K[2]\leq2^{2\cdot\operatorname{rk}_2(\Cl_F[2])+2+\delta}$ where $\delta$ is the number of prime ideals in $\Ocal_F$ ramified in $\Ocal_K$.  Notice that then $\delta=\delta(|\Delta_{K/F}|)$, so Lemma \ref{lem:delta} implies $2^\delta\leq |\Delta_{K/F}|^{\frac{1}{\log\log |\Delta_{K/F}|}}\leq |\Delta_{K}|^{\frac{1}{\log\log |\Delta_{K}|}}$ and the result follows. 

\end{proof}

\section{An explicit upper-bound for the infinity part of the Faltings height}\label{sec:boundh}
In this section we explicitly bound each of the infinity parts of the Faltings height of $h(\operatorname{Jac}(C))$ for $C$ a genus 2 curve with CM by $K$ a Galois cyclic quartic CM field, in terms of the discriminant $\Delta_K$. We recall that each inifinity part is given by Equation (\ref{infinitypart}):

\begin{align*}
h_i^\infty = -\frac1{10 \# H} \sum_{[I]\in H} \log \left(|\chi_{10}(Z_{I, \textrm{red}})| \det (\Im Z_{I, \textrm{red}})^5\right).
\end{align*}
Here, we are in the case of $\Jac(C)$, so our abelian variety is indecomposable, hence the off-diagonal entries of $Z_{I, \textrm{red}}$ are non-zero and $\chi_{10}(Z_{I, \textrm{red}})$ is non-zero.

From Proposition 5.6 of \cite{HabeggerPazuki17}, we get that
\begin{align*}
|\chi_{10} (Z)| \geq 8 \cdot 10^{-5} \min \{ 1, \pi |z_{12}|\}^2 e^{-2\pi \Tr \Im Z_{I, \textrm{red}}}.
\end{align*}

Hence 

\begin{align*}
h_i^\infty =& -\frac1{10 \# H} \sum_{[I]\in H} \log (|\chi_{10}(Z_{I, \textrm{red}})| \det (\Im Z_{I, \textrm{red}}))^5= \\
& \frac1{10 \# H} \sum_{[I]\in H} (-\log |\chi_{10}(Z_{I, \textrm{red}})| - 5\log \det (\Im Z_{I, \textrm{red}}))\leq \\
& \frac1{10 \# H} \sum_{[I]\in H} (-\log (8 \cdot 10^{-5} \min \{ 1, \pi |z_{12}|\}^2 e^{-2\pi \Tr (\Im Z_{I, \textrm{red}})}) - 5\log \det (\Im Z_{I, \textrm{red}}))= \\
& - \frac{\log (8\cdot 10^{-5})}{10} +\frac1{10 \# H} \sum_{[I]\in H} (\log (\min \{ 1, \pi |z_{12}|\}^{-2}) + 2\pi \Tr (\Im Z_{I, \textrm{red}}) - 5\log \det (\Im Z_{I, \textrm{red}})).
\end{align*}

In the next step of the proof in \cite{HabeggerPazuki17}, they  bound  $\det(\Im (Z_{I,\textrm{red}}))$ from below by a non-explicit constant $c_2$. Here we use Lemma \ref{lem:det} and we get:

\begin{align*}
h_i^\infty & \leq - \frac{\log (8\cdot 10^{-5})}{10}- \frac12 \log \frac98+\frac1{10 \# H} \sum_{[I]\in H} (\log (\min \{ 1, \pi |z_{12}|\}^{-2}) + 2\pi \Tr (\Im (Z_{I, \textrm{red}}))).
\end{align*}

Now we use our explicit version of \cite[Lem. 3.6(i)]{HabeggerPazuki17}, i.e. Lemma \ref{lem:lemma36HPexplicit} to bound the trace of the imaginary part of the reduced period matrix in order to get:

\begin{equation*}
h_i^\infty \leq - \frac{\log (8\cdot 10^{-5})}{10}- \frac12 \log \frac98+\frac{\sum_{[I]\in H} \left(\log (\min \{ 1, \pi |z_{12}|\}^{-2}) + \frac{4\pi}{3}\left( \frac{|\Delta_K|^\frac12}{N([I^{-1}])}\right)^{1/2} \right)}{{10 \# H}}.
\end{equation*}

Now, we use our Proposition \ref{propminz12} to explicitly bound $|z_{12}|$ in terms of $|\Delta_K|$: 

\begin{equation*}
h_i^\infty \leq - \frac{\log (8\cdot 10^{-5})}{10}- \frac12 \log \frac98+ \frac{\log \frac{{9}|\Delta_K|}{{4}\pi^2}}{10}+\frac{\sum_{[I]\in H} \frac{2\pi}{3}\left( \frac{|\Delta_K|^\frac12}{N([I^{-1}])}\right)^{1/2}}{{5 \# H}}.
\end{equation*}

Next, we apply our subconvexity bound in Proposition \ref{propsubconv} (so we need to assume here $|\Delta|_K\geq 9.3\times 10^7$) to control the last term:

\begin{equation*}
h_i^\infty \leq - \frac{\log (8\cdot 10^{-5})}{10}- \frac12 \log \frac98+ \frac{\log \frac{{9}|\Delta_K|}{{4}\pi^2}}{10}+\frac{2\pi}{15}\left(80\frac{|\Delta_K|^{1/2 - 1/32}}{\# H}+\frac{5800 R_F \log |\Delta_K|}{|\Delta_K|^{1/32}} + 20 R_F\right).
\end{equation*}

Now, Proposition \ref{deltalog} gives:

\begin{equation*}
\begin{split}
h_i^\infty \leq &- \frac{\log (8\cdot 10^{-5})}{10}- \frac12 \log \frac98+ \frac{\log\frac{{9}|\Delta_K|}{{4}\pi^2}}{10}\\
&+\frac{2\pi}{15}\left(80\cdot215 h_F^3 R_F \frac{\log |\Delta_K|}{|\Delta_K|^{1/32- 1/\log \log |\Delta_K|} }+\frac{5800 R_F \log |\Delta_K|}{|\Delta_K|^{1/32}} + 20 R_F\right),
\end{split}
\end{equation*}

which translates into 

\begin{equation}\label{finalbh}
h_i^\infty \leq 0.7{4}+ 8.4R_F +\left(\frac{1}{10}+\frac{7200h_F^3 R_F}{|\Delta_K|^{1/32- 1/\log \log |\Delta_K|} }+\frac{2430 R_F }{|\Delta_K|^{1/32}}\right)\log|\Delta_K|.
\end{equation}

\section{Proof of the Main Theorem}\label{sec:main}
We prove now our main theorem, i.e. Theorem \ref{thm:main}.

\begin{proof}
First, Proposition \ref{easyhbound} and Equation \ref{def:h} give us:

\begin{equation}
\begin{split}
-(\frac{\gamma_F}{2}+\frac{1}{4}\log \Delta_F+\log 2\pi+\gamma_\mathbb{Q})+\frac{\sqrt{5}}{20}\log \Delta_K \leq\\ \frac{1}{60[k:\mathbb{Q}]}\log \operatorname{N}(\Delta^0_{min}(C)) +{\frac{1}{4}}\sum_{i=1}^{4}h_i^\infty-\frac{4}{5}\log 2-\log\pi 
\end{split}
\end{equation}

This together with Equation \ref{finalbh} under the assumption $\Delta_K\geq98^4$ gives

\begin{equation}
\begin{split}
 (\frac{\sqrt{5}}{20}-\frac{1}{10}-\frac{7200h_F^3 R_F}{|\Delta_K|^{1/32- 1/\log \log |\Delta_K|} }&-\frac{2430 R_F }{|\Delta_K|^{1/32}})\log \Delta_K\leq \\ \frac{1}{60[k:\mathbb{Q}]}\log \operatorname{N}(\Delta^0_{min}(C)) +  \frac{\gamma_F}{2}+\frac{1}{4}\log \Delta_F+\log 2\pi&+\gamma_\mathbb{Q}-\frac{4}{5}\log 2-\log\pi + 0.7{4}+ 8.4R_F \leq\\
 \frac{1}{60[k:\mathbb{Q}]}\log \operatorname{N}(\Delta^0_{min}(C)) +  \frac{\gamma_F}{2}+&\frac{1}{4}\log \Delta_F + 8.4R_F + 1.4{5}.
\end{split}
\end{equation}

For simplicity, write the previous expression as:
$$
(\mathcal{A}-\frac{\mathcal{B}}{|\Delta_K|^{1/32- 1/\log \log |\Delta_K|}}-\frac{\mathcal{C}}{|\Delta_K|^{1/32}})\log \Delta_K\leq h^0+\mathcal{D}.
$$
Assume $|\Delta_K|>e^{e^{64}}, (20\mathcal{C})^{32}, (20\mathcal{B})^{64}$. Then the term $\frac{\mathcal{B}}{|\Delta_K|^{1/32- 1/\log \log |\Delta_K|}}\leq\frac{\mathcal{B}}{|\Delta_K|^{1/64}}\leq \frac{1}{20}$ and $\frac{\mathcal{C}}{|\Delta_K|^{1/32}}\leq\frac{1}{20}$. So $\frac{1}{10}\cdot\log\Delta_K\leq h^0+\mathcal{D}$. Then $$\Delta_K\leq\max\{e^{e^{64}}, (20\mathcal{C})^{32},(20\mathcal{B})^{64}, e^{10(h^0+\mathcal{D})}\}.$$
Moreover, in our situation $\mathcal{C}<\mathcal{B}$ and the result follows.
\end{proof}

\section{An explicit example: the real field $\mathbb{Q}(\sqrt{5})$}\label{sec:comp}
In this section we fix once and for all the totally real field $F=\mathbb{Q}(\sqrt{5})$. We want to explicitly determine the finite list (\cite[Thm. 1.1]{HabeggerPazuki17}) of genus 2 curves defined over $\bar{\mathbb{Q}}$ with CM by the maximal order of a cyclic Galois CM field containing $F$ and with good reduction everywhere. Our Corollary \ref{cor:main} gives us a bound for the discriminant of such a field $K$. For each of these fields there are a finite number of such curves and we can compute them via the complex multiplication method \cite{Streng14}. For each curve we can check the potentially good reduction everywhere condition with Liu's criterion \cite[Thm. 1 (I)]{Liu93}.

\subsection{The discriminant bound} 

In the case under consideration we have $\Delta_F=5$, $h_F=1$, $R_F=\log \frac{\sqrt{5}+1}{2}$ and 
$\gamma_F\leq 2$ (\cite[p.19]{Ihara06}). If we ask  $\operatorname{N}(\Delta^0_{min}(C))=1$, we obtain by Corollary \ref{cor:main}, $\Delta_K\leq e^{e^{64}}$.
This is an explicit bound. However, too big to be practical as it will be discussed later.

\subsection{Listing CM fields}\label{subsec:fields}
In order to list all the quartic CM fields Galois over $\mathbb{Q}$ and containing $\mathbb{Q}(\sqrt{5})$ we first looked into the LMFDB data base \cite{lmfdb}. Unfortunately, this list is only certified to be complete up to discriminant $4\cdot 10^6$. In this interval we find 45 such fields: see the \href{https://www.lmfdb.org/NumberField/?count=None&hst=List&degree=4&signature=\%5B0\%2C2\%5D\&cm_field=yes\&galois_group=C4\&is_galois=yes\&discriminant=1-4000000\&subfield=x\%5E2-5\&search_type=List}{list}. 

After discussion with John Voight about the completeness of the list, he showed us how to produce the full list of such fields up to a given discriminant via the Kronecker-Weber Theorem: all abelian extensions $K$ of $\mathbb{Q}$ which
contain $\mathbb{Q}(\sqrt{5})$ are given by $K = \mathbb{Q}(\zeta_N)^H$
where $H \leq (\mathbb{Z}/N\mathbb{Z})^*$
is a subgroup whose projection to $(\mathbb{Z}/5^{v_5(N)}\mathbb{Z})^*$  is contained in the subgroup of squares $(\mathbb{Z}/5^{v_5(N)}\mathbb{Z})^{*2}$. We can dually think about Dirichlet characters
  $\chi : (\mathbb{Z}/N\mathbb{Z})^* \rightarrow \mathbb{C}^*$
under the correspondence $H = \operatorname{Ker} \chi$. 
The conductor-discriminant formula tells us $\operatorname{disc} K$ in terms of $N$ and
$\chi$ (and its powers). In addition, he provided Magma \cite{magma} code to us to produce the list of fields.  In a few hours it is possible to confirm the completeness of the LMFDB output for discriminant under $1,6\cdot10^9$.

\subsection{Computing the curves}\label{subsec:comp} Once we have the possible CM fields we use Streng's algorithm \cite{Streng14} and code in Sage \cite{sagemath} version 9.5 to compute the class polynomials corresponding to these fields. The simultaneous computation of the polynomials for the 45 fields of discriminant smaller than $4\cdot 10^6$ took several days on a server of the University of G\"ottingen. The parameters of the server are: 32 GB RAM, 4 AMD Opteron Processors 6272, 64 cores. They are listed in the ancillary file attached to this paper on arXiv \ref{}. 

We need to factor the class polynomials in order to check if we find curves whose absolute invariants as in \cite{Streng14} are algebraic integers and hence having everywhere potentially good reduction \cite{Liu93}. This factorization is straightforward since we get polynomials of relatively small degree.  From the Igusa invariants \cite{Igusa60} of the curves we reconstruct equations of the curves using Mestre algorithm \cite{Mestre}. 

As suspected, we find a small number of such curves and they have a very small discriminant.

\begin{prop}\label{curves} The only genus 2 curves over $\bar{\mathbb{Q}}$ with CM by a cyclic Galois CM field containing $\mathbb{Q}(\sqrt{5})$ with potentially good reduction everywhere and discriminant smaller than $4\cdot10^6$ are:
\begin{enumerate}
\item $y^2=x^5+1$ with CM by the splitting field of the polynomial $x^4+x^3+x^2+x+1=0$ with discriminant 125. 
\item The curve with absolute Igusa invariants \small $$(j_1,j_2,j_3)=(183708000, 474590099025000000, 25021491747613593750000000)$$ \normalsize and with CM by the splitting field of the polynomial $x^4 + 10x^2 + 20=0$ with discriminant 8.000. 
    \end{enumerate}
\end{prop}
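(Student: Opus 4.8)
The plan is to combine the explicit discriminant bound coming from Corollary \ref{cor:main} (specialised to $F = \mathbb{Q}(\sqrt5)$, where $\Delta_F = 5$, $h_F = 1$, $R_F = \log\frac{1+\sqrt5}{2}$, $\gamma_F \le 2$) with the list of all cyclic Galois quartic CM fields containing $F$, and then for each such field enumerate the finitely many genus $2$ curves with CM by its maximal order and test each one for potentially good reduction everywhere. Concretely, for $\mathrm{N}(\Delta^0_{\min}(C)) = 1$ the bound reads $\Delta_K \le e^{e^{64}}$, which is astronomically large; so the theoretical bound alone cannot finish the job. The statement as phrased restricts to $\Delta_K < 4\cdot 10^6$, so what actually has to be proved is that among the $45$ fields in that range (the LMFDB list, whose completeness below $4\cdot10^6$ is classical and is in any case re-verified in Subsection \ref{subsec:fields} via Kronecker--Weber and the conductor--discriminant formula) exactly the two listed curves arise.

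The key steps, in order, are: (1) enumerate the cyclic Galois quartic CM fields $K \supseteq \mathbb{Q}(\sqrt5)$ with $\Delta_K < 4\cdot10^6$ — this is the $45$-element LMFDB list, independently confirmed by the Magma computation described in Subsection \ref{subsec:fields}; (2) for each such $K$, run Streng's CM algorithm \cite{Streng14} to compute the associated Igusa class polynomials, whose roots give the triples of absolute Igusa invariants $(j_1,j_2,j_3)$ of all principally polarised abelian surfaces with CM by $\mathcal{O}_K$ that are Jacobians of genus $2$ curves; (3) factor these class polynomials — they have small degree, so this is routine — and retain only those invariant-triples that are algebraic integers, since by Liu's criterion \cite[Thm. 1 (I)]{Liu93} integrality of the invariants is equivalent to potentially good reduction everywhere; (4) for each surviving triple reconstruct a model of the curve over $\overline{\mathbb{Q}}$ from its Igusa invariants \cite{Igusa60} via Mestre's algorithm \cite{Mestre}, and identify the two curves in the statement, checking in particular that $y^2 = x^5+1$ has CM by $\mathbb{Q}(\zeta_5)$ (discriminant $125$) and that the second curve has CM by the splitting field of $x^4 + 10x^2 + 20$ (discriminant $8000$).

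The main obstacle is not mathematical subtlety but computational scale and reliability: step (2) is expensive (the excerpt reports several days on a $64$-core server for the $45$ fields), and one must be confident that Streng's algorithm has been run with sufficient precision that every root of every class polynomial is correctly recognised, that no field in the range has been omitted, and that the integrality test in step (3) is carried out exactly rather than numerically. A secondary point worth being careful about is that CM by $\mathcal{O}_K$ for the surface does not automatically mean the surface is a Jacobian — one must discard the products of elliptic curves and the cases where the principal polarisation is not a Jacobian polarisation — but Streng's algorithm is set up to output exactly the Jacobian locus, so this is handled by the method rather than requiring separate argument. Once all invariant-triples with integral coordinates are in hand, matching them against the two explicit curves and verifying their CM fields is a finite check, completing the proof of Proposition \ref{curves}.
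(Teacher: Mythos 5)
Your proposal follows essentially the same route as the paper: restrict to the $45$ fields with $\Delta_K < 4\cdot 10^6$ (LMFDB list, re-verified via Kronecker--Weber and the conductor--discriminant formula), compute Igusa class polynomials with Streng's algorithm, filter by integrality of the invariants using Liu's criterion, and reconstruct the two surviving curves with Mestre's algorithm. This matches the computation described in Subsections \ref{subsec:fields} and \ref{subsec:comp}, which is exactly the paper's justification of Proposition \ref{curves}.
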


An equation for the curve in Proposition \ref{curves}(2) is
{\fontsize{6pt}{7pt} \selectfont \begin{equation*}
\begin{split}
y^2= &4222312357426855589892446843449627238487267610385292053012303364265424735291376597624117160311275176522847679373169206x^6 + \\ &1006226725888251106846275261305954015390155673145305039217582666370 7943648243534530978294450045033943455794367399292714x^5 -\\ &
403057053963299190180254632598191705055375892684621358577773309197361984942    46121963162248732048009377132383269022406680x^4 -\\ &
609219508903329966709273429330080475708351005002803540326939976421883670959    54308915201749284254022162360041511895991180x^3 +\\ &
120103048094637088181226241040284584144279528614312313099089754500382331741
    514157897756951562714088003833840991511346620x^2 +\\ &
314602025756187181666775548578025941334222682912368207923016437797670514349
    02351086040699504238675524130596116583519890x +\\ &
194921686940333930721040072259071993554647979114132103254654330879089372270    7539347423541210530941057918022842861414235.
    \end{split}
    \end{equation*} }
\normalsize

Even if with big coefficients, this equation is in reduced form as can be checked with Stoll-Cremona algorithm \cite{Stoll}. These curves already appeared in \cite[Thm. 6.5]{KS23}.

We strongly suspect that there is no other genus $2$ curve  over $\bar{\mathbb{Q}}$ with CM by a cyclic Galois CM field containing $\mathbb{Q}(\sqrt{5})$ with potentially good reduction everywhere: notice that they have discriminant $125, 8000\ll 4\cdot10^6\ll e^{e^{64}}$.

\bibliography{Bib_igusa}
\bibliographystyle{alpha}
\end{document}